\title{Verlinde rings and cluster algebras arising from quantum affine algebras}
\author[C.-h. Lee]{Chul-hee Lee}
\address[C.-h. Lee]{June E Huh Center for Mathematical Challenges, Korea Institute for Advanced Study, Seoul 02455, Korea}
\email{chlee@kias.re.kr}
\author[J.-R. Li]{Jian-Rong Li}
\address[J.-R. Li]{Faculty of Mathematics, University of Vienna, Oskar-Morgenstern Platz 1, 1090 Vienna, Austria}
\email{lijr07@gmail.com}
\author[E. Park]{Euiyong Park}
\address[E. Park]{Department of Mathematics, University of Seoul, Seoul 02504, Korea}
\email{epark@uos.ac.kr}
\date{\today}
\begin{document}

\begin{abstract}
We formulate a positivity conjecture relating the Verlinde ring associated with an untwisted affine Lie algebra at a positive integer level and a subcategory of finite-dimensional representations over the corresponding quantum affine algebra with a cluster algebra structure. Specifically, we consider a ring homomorphism from the Grothendieck ring of this representation category to the Verlinde ring and conjecture that every object in the category has a positive image under this map.

We prove this conjecture in certain cases where the underlying simple Lie algebra is simply-laced with level 2 or of type $A_1$ at an arbitrary level. The proof employs the close connection between this category and cluster algebras of finite cluster type. As further evidence for the conjecture, we show that for any level, all objects have positive quantum dimensions under the assumption that some Kirillov-Reshetikhin modules have positive quantum dimensions.
\end{abstract}

\maketitle

\setcounter{tocdepth}{2}
\tableofcontents

\section{Introduction}
The dilogarithm function is defined as  
\[
\dilog(z) = \sum_{n=1}^\infty \frac{z^n}{n^2}, \quad 0 \leq z \leq 1.
\]
Though seemingly simple, it is remarkable that the dilogarithm plays a significant role in various fields, including algebraic \(K\)-theory, number theory, mathematical physics, and hyperbolic geometry.
A \emph{dilogarithm identity} takes the form  
\begin{equation}
\sum_{j\in J} L(x_j) = \frac{\pi^2}{6}c,
\label{eq:dilog}    
\end{equation}
where \(L(z)\) is the Rogers dilogarithm function, given by  
\[
L(z) = \dilog(z) + \frac{1}{2} \log(z) \log(1-z),
\]  
\((x_j)_{j\in J}\) are real algebraic numbers satisfying \(0 \leq x_j \leq 1\), and \(c\) is a rational number;
see, for example, \cite{MR1356515, MR2290758} for a more detailed discussion.

Dilogarithm identities play a crucial role in the study of certain integrable statistical models that are believed to be connected to conformal field theories (CFTs). In CFTs, the central charge of a model is a fundamental parameter that reflects the algebraic structure of the theory. By employing the Thermodynamic Bethe Ansatz (TBA) approach, physicists can link quantities from the statistical models to quantities from the related CFTs; see \cite{MR1092210} for example.
This often leads to the effective central charge \( c \) being computed as a sum of dilogarithms evaluated at certain algebraic numbers, as in \eqref{eq:dilog}. 
While providing a rigorous mathematical justification for the TBA approach is challenging, these dilogarithm identities offer robust consistency checks, thereby supporting the framework's validity.

Building on studies inspired by the TBA framework, Kirillov formulated a family of conjectural dilogarithm identities associated with a simple Lie algebra \(\gzero\) over $\C$, and an integer \(\levk \geq 2\) \cite{MR947332}. The positive real arguments of the dilogarithm identity are given as a solution to a certain system of algebraic equations.
Beyond the dilogarithm identity itself, an interesting aspect of this construction is the fact that these positive arguments could be also derived from the conjecturally positive numbers $\Diim$, obtained by evaluating the characters \(\Qim\) of certain finite-dimensional representations \(\res \Wim\) of \(\gzero\), indexed by \((i, m)\), where \(i\) corresponds to a vertex in the Dynkin diagram of \(\gzero\), and \(m = 0, 1, \ldots, t_i \levk\):  
\[
\Diim := \Qim\left(\frac{\rho}{\levk+\hvee}\right).
\]  
Here, \(\rho\) is the Weyl vector, and \(\hvee\) is the dual Coxeter number.
The value \(t_i \in \{1, 2, 3\}\) is the ratio of the norm of the highest root to the norm of the simple root \(\alpha_i\).
The number obtained by specializing the character of a finite-dimensional \(\gzero\)-module \(V\) as above is also known as its quantum dimension at level \(\levk\), as these values approach the actual dimension of \(V\) as \(\levk \to \infty\).
This quantum dimension is a real number, though it is not necessarily positive.

The development of cluster algebras \cite{MR1887642} led to the eventual proof of the conjectural dilogarithm identity for all types \cite{MR2804544, MR2164838, MR3029994, MR3029995}.
Despite these advancements, expressing the positive arguments in the dilogarithm identity in terms of the character evaluations persisted as an open challenge.
In the meantime, building on Kirillov's work, Kuniba, Nakanishi, and Suzuki explored this problem, and introduced additional conjectures that extended his original formulation \cite{MR1202213, MR1192727, MR1304818, MR2773889}.

In \cite{MR3628223}, the problem was reformulated as one concerning the image of the isomorphism classes of Kirillov-Reshetikhin (KR) modules $\Wim$ over the quantum affine algebra \(\uqghatp\) in the Verlinde ring \(K_0(\fusk)\) associated with the affine Lie algebra $\ghat$ and $\levk$, under a certain ring homomorphism
\[
\phik : K_0(\repuqghat) \to K_0(\fusk).
\]
See \eqref{eq:phik} for its description.
The Verlinde ring \(K_0(\fusk)\) is a \(\Z\)-ring with a distinguished basis, allowing for a well-defined notion of positivity; see \defref{def:verlinde_positivity}.
In particular, an important question was whether \(\phik([\Wim])\) is positive for \(0 \leq m \leq t_i \levk\). 
This formulation has made it possible to prove the conjectures proposed by Kirillov and further developed by Kuniba, Nakanishi, and Suzuki for all classical types.

Although the above formulation has resolved some long-standing questions in certain cases, the new challenge of determining the image of all KR modules in \(K_0(\fusk)\) under the map \(\phik\) remains an open problem; see \conjref{conj:phik} and related references for a more precise and general statement.
Especially, \conjref{conj:phik} introduces an intriguing positivity question concerning \(K_0(\repuqghat)\) and \(K_0(\fusk)\), leading to the following natural questions:
\begin{itemize}
    \item First, is there a natural subring or subset of \(K_0(\repuqghat)\) where the map \(\phik\) consistently produces a positive image? In general, \(\phik([V])\) may be zero or even negative for an object \(V\) in \(\repuqghat\).
    
    \item Second, does this issue have a more direct connection to cluster algebras? Although the original dilogarithm identity is strongly linked to cluster algebras, particularly through the \(Y\)-systems \cite{MR1092210, MR2031858, MR2999039}, it has remained an open problem to place \conjref{conj:phik} within the broader context of cluster algebras.
\end{itemize}

Meanwhile, in their work on the monoidal categorification of cluster algebras, Hernandez and Leclerc introduced certain monoidal subcategories $\Cell$ of $\repuqghat$, parametrized by positive integers $\ell$ \cite{MR2682185, MR3500832}.
They showed that the Grothendieck ring of each of these subcategories has a cluster algebra structure.
In particular, $\Cone$ provides a monoidal categorification of a cluster algebra of finite cluster type, whose classification is given in \cite{MR2004457}.

In response to the questions above, we propose the following conjecture in this paper:
\begin{conjecture*}[Conjecture \ref{conj:main1}]
Let $\levk \ge 2$ be a positive integer, $\ellk = \levk-1$ and let $\phik : K_0(\Cellk) \to K_0(\fusk)$ be the restriction of $\phik$ on $K_0(\Cellk)$.
For every $V$ in $\Cellk$, $\phik([V])$ is positive.
\end{conjecture*}
For a precise formulation of the conjecture, we make a slight adjustment of the definition of $\Cell$; see \secref{sec:Cell} and \remref{rmk:Cell}.

\textbf{Main results.}
The main results of this paper are as follows:
We prove that if the KR modules in the initial seed of $K_0(\Cellk)$ have positive quantum dimensions at level $\levk$, then all objects in \(\Cellk\) also have positive quantum dimensions at level $\levk$; see \thmref{thm:qdim}.
This is consistent with a quantum dimension version of our conjecture, and provides a strong evidence to our main conjecture.
We also establish our main conjecture, the Verlinde positivity, in certain special cases, specifically for
\(\Cell\) when the underlying simple Lie algebra \(\gzero\) is of type \(A_1\), and for \(\Cone\) when \(\gzero\) is of type \(A_n\) or one of the exceptional types \(E_6\), \(E_7\), or \(E_8\) for certain choices of height functions:
see \thmref{thm:Cone}.
In addition, we provide a detailed conjectural description for type \(D_n\).

This paper is organized as follows. 
In \secref{sec:background}, we review the basic concepts of cluster algebras, Kac-Moody algebras, and the quantum Kac-Moody algebras. We also provide a brief introduction to the Verlinde ring associated with an affine Lie algebra and a positive integer $\levk \ge 1$.
In \secref{sec:Cell}, we define the subcategories \(\Cell\), parametrized by positive integers $\ell$, of the category of finite-dimensional representations over quantum affine algebras, equipped with a cluster algebra structure.
In \secref{sec:main_conj}, we present our main conjecture concerning positivity in the Verlinde ring under a certain ring homomorphism from \(K_0(\Cellk)\). As supporting evidence, we prove its quantum dimension version under the assumption that the conjecture of Kirillov, Kuniba, Nakanishi and Suzuki regarding the positivity of the quantum dimension of certain KR modules holds.
In \secref{sec:pos_fit_cls}, we establish our main conjecture for \(\Cone\) in some special cases.
In \secref{sec:examples}, we provide detailed examples illustrating our conjecture, focusing on a non-real module and a multiply-laced type, offering additional evidence in support of our conjecture.

\subsection*{Acknowledgements}
C.-h. Lee is supported by a KIAS Individual Grant (SP067302) via the June E Huh Center for Mathematical Challenges at Korea Institute for Advanced Study.
JR Li is supported by the Austrian Science Fund (FWF): P-34602, Grant DOI: 10.55776/P34602, and PAT 9039323, Grant-DOI 10.55776/PAT9039323.
E. Park is supported by the National Research Foundation of Korea (NRF) Grant funded by the Korea Government(MSIT)(RS-2023-00273425 and NRF-2020R1A5A1016126).
We thank the KIAS Center for Advanced Computation and the Vienna Scientific Cluster (VSC) for providing access to computing resources.
\vskip 1em

\section{Background}\label{sec:background}
Throughout this paper, for a statement $P$, we set $\delta(P)$ to be $1$ or $0$ depending on whether $P$ is true or not. In particular, we set $\delta_{i,j}=\delta(i =j)$.

\subsection{Cluster algebras}
Let $\caI = \caIe \sqcup \caIf$ be a (finite or countably many) index set that decomposes into the set $\caIe$ of \emph{exchangeable} indices and the set $\caIf$ of \emph{frozen} indices. An \emph{exchange matrix} $\caB = (b_{i,j})_{i\in \caI, j\in \caIe}$ is an integer-valued matrix such that 
\bna
\item for each $j\in \caIe$, there are finitely many $i\in \caI$ such that $b_{i,j}$ are nonzero,
\item  the \emph{principal part} $(b_{i,j})_{i\in \caIe, j\in \caIe}$ of $\caB$ is skew-symmetric.
\ee
For convenience, when $i\in \caI$ and $j\in \caIf$, we define 
\[
b_{i,j} = 
\begin{cases}
-b_{j,i} & \text{ if $i\in \caIe$ and $j\in  \caIf$},   \\
0 & \text{ if $i, j\in \caIf$}.
\end{cases}
\] 
Note that, to an exchange matrix $\caB$, one can associate a quiver $Q_\caB$ as follows: the set of vertices of $Q_\caB$ is $\caI$ and the number of arrows from $i\in \caI$ to $j \in \caI$ is defined to be the number $\max\{ 0, b_{i,j} \}$. 
The quiver $Q_\caB$ also determines the matrix $\caB$ by taking 
\[
b_{i,j} := \text{(the number of arrows $i \rightarrow j$)} - \text{(the number of arrows $j \rightarrow i$)}.
\]

Let $A$ be a commutative algebra which is contained in  a field $F$. A pair $ \seed = ( \{ x_i \}_{i\in \caI}, \caB)$ is called a \emph{seed} in $A$ if the set $\{ x_i\}_{i\in \caI} $ is contained in $ A$ and the homomorphism from a polynomial ring $ \Z[\mathrm{X}_i\ ; i\in \caI] $ to $A$ defined by $\mathrm{X}_i \mapsto x_i$ ($i\in \caI$) is injective. The set $\{ x_i\}_{i\in \caI}$ is called a \emph{cluster} of $\seed$ and the elements $x_i$ are called \emph{cluster variables}. For any $\bfa = (a_i)_{i\in \caI} \in \Znat^{\caI}$, the monomial
\[
x^\bfa := \prod_{i\in \caI} x_i^{a_i}
\]
is called a \emph{cluster monomial}.

For each $k\in \caIe$, we define a set $\mu_k ( \{ x_i \}_{i\in \caI} ) = \{ y_i \}_{i\in \caI} $ and a matrix $\mu_{k}(\caB) = (c_{i,j})_{i\in \caI, j\in \caIe}$ by
\begin{align*}
y_i &:= 
\begin{cases}
(x^{\bfa'} + x^{\bfa''})/x_k  & \text{ if $i=k$,}   \\
x_i & \text{ if $i\ne k$,}
\end{cases}\\
c_{i,j} &:= 
\begin{cases}
-b_{i,j} & \text{ if $i=k$ or $j=k$},   \\
b_{i,j} + (-1)^{\delta(b_{i,k} < 0)} \max \{ b_{i,k} b_{k,j}, 0 \} & \text{ otherwise,}
\end{cases} 
\end{align*}
where $\bfa' = (a_i')_{i\in \caI}$ and $\bfa'' = (a_i'')_{i\in \caI}$ are given by 
\[
a_i' = 
\begin{cases}
0  & \text{ if $i=k$,}   \\
\max\{ 0, b_{i,k} \} & \text{ if $i\ne k$,}
\end{cases}
\qquad 
a_i'' = 
\begin{cases}
0  & \text{ if $i=k$,}   \\
\max\{ 0, -b_{i,k} \} & \text{ if $i\ne k$.}
\end{cases}
\]
The pair $\mu_k(\seed) := ( \mu_k ( \{ x_i \}_{i\in \caI} ), \mu_{k}(\caB))$ is called the \emph{mutation} of $\seed$ at $k$, 
and the following equality in $F$ is called an \emph{exchange relation}:
\[
x_k y_k  = x^{\bfa'} + x^{\bfa''}.
\]

The \emph{cluster algebra} $\CA(\seed)$, associated to an initial seed $\seed$, is the $\Z$-subalgebra of the field $F$ generated by all cluster variables obtained from $\seed$ by all possible successive mutations.
A commutative algebra $A$ has a \emph{cluster algebra structure} associated to a seed $\seed$ in $A$ if there exists a family $\mathfrak{F}$ of seeds in $A$ such that 
\bna
\item $\CA(\seed)$ coincides with $A$,
\item $\seed\in \mathfrak{F}$, and any mutation of a seed in $\mathfrak{F}$ is contained in $\mathfrak{F}$,
\item for any $\seed', \seed'' \in \mathfrak{F}$, $\seed'$ can be obtained from $\seed''$ by a finite sequence of mutations.
\ee

A cluster algebra is said to be of \emph{finite cluster type} if it has a finite number of cluster variables.
The \emph{Cartan counterpart} $C(\caB) = (c_{i,j})_{i,j\in \caIe}$ associated with an exchange matrix $\caB$ is the square matrix defined by 
$$
c_{i,j} = 
\begin{cases}
2 & \text{ if } i=j,\\
-|b_{i,j}| & \text{ if } i\ne j.
\end{cases}
$$
We then have a couple of results on cluster algebras of finite cluster types, which will be needed later.
\begin{theorem}[Finite type classification {\cite[Theorem 1.4]{MR2004457}}] 
The cluster algebra $A$ is of finite cluster type if and only if there is an exchange matrix $\caB$ of $A$ such that the Cartan counterpart $C(\caB)$ is a block diagonal matrix of Cartan matrices of finite types, up to conjugation by a permutation matrix.
\end{theorem}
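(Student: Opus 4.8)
The plan is to prove the two implications separately, following the strategy of Fomin--Zelevinsky. For the ``if'' direction, suppose $\caB$ is an exchange matrix of $A$ whose Cartan counterpart $C(\caB)$ becomes, after conjugating by a permutation matrix, block diagonal with blocks $C_1,\dots,C_r$ that are Cartan matrices of finite type. A mutation at $k\in\caIe$ only alters entries $b_{i,j}$ for which $i$ or $j$ lies in the same block as $k$, so the mutation class of $\caB$ respects the block decomposition and $A$ factors accordingly over the blocks; it therefore suffices to treat a single indecomposable finite-type block, i.e.\ to show that if $C(\caB)$ is an indecomposable Cartan matrix of finite type then $\CA(\seed)$ has only finitely many cluster variables.

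For this single-block case I would use the root-theoretic model. Let $\Phi$ be the finite root system attached to $C(\caB)$, with simple roots $\alpha_1,\dots,\alpha_n$, and let $\Phi_{\ge -1}=\Phi_{>0}\sqcup\{-\alpha_1,\dots,-\alpha_n\}$ be the set of almost positive roots. One introduces the compatibility degree on $\Phi_{\ge -1}$, defines the associated cluster complex whose faces are the pairwise-compatible subsets, and checks that its maximal faces (``clusters'') are permuted transitively by the combinatorial mutations, with only finitely many faces because $\Phi_{\ge -1}$ is finite. The essential step is to match this combinatorial model with the actual cluster algebra: one tracks the denominators of the Laurent expansions of cluster variables with respect to the initial seed and shows that they are exactly the monomials $\prod_i x_i^{[\alpha:\alpha_i]}$ indexed by $\alpha\in\Phi_{\ge -1}$. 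This yields a bijection between cluster variables and almost positive roots, and hence the desired finiteness.

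For the ``only if'' direction, assume $A=\CA(\seed)$ is of finite cluster type. The key reduction is to encode an exchange matrix $\caB$ by its \emph{diagram} $\Gamma(\caB)$: the weighted graph on $\caIe$ with an edge between $i$ and $j$ labeled $|b_{i,j}b_{j,i}|$ whenever $b_{i,j}\ne 0$. Finiteness of the cluster type forces the mutation class of $\Gamma(\caB)$ to be finite, and in particular every diagram mutation-equivalent to $\Gamma(\caB)$ has all edge labels in $\{1,2,3\}$ --- call such a diagram \emph{$2$-finite}. Indeed, if a label $\ge 4$ ever appeared, the corresponding rank-two sub-pattern already has Cartan counterpart of affine or indefinite type and produces infinitely many distinct cluster variables, contradicting finite type. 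The disconnected-versus-block-diagonal correspondence is automatic, so the theorem reduces to the claim that a diagram is $2$-finite if and only if it is mutation-equivalent to an orientation of a Dynkin diagram of finite type (with the labels recording the short/long edges), which is precisely the assertion that $C(\caB)$ can be brought to finite-type block form.

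The main obstacle is the classification of $2$-finite diagrams. I would establish it by exhibiting an explicit finite list of ``minimal $2$-infinite'' diagrams --- on a bounded number of vertices --- and proving that any diagram which is not mutation-equivalent to a finite-type Dynkin orientation contains, after some sequence of mutations, one of these forbidden configurations; conversely each finite-type Dynkin diagram must be shown to be $2$-finite, e.g.\ by directly computing its (finite) mutation class, or as a byproduct of the root model used in the ``if'' direction. Organizing the rank-$3$ analysis, controlling the multiply-laced labels, and verifying the ``contains a forbidden subdiagram after mutation'' statement is the technical heart of the argument and the step I expect to consume most of the effort; by contrast the rank-two bounds and the factorization-over-blocks reductions are comparatively routine.
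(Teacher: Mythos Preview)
The paper does not prove this theorem at all: it is quoted verbatim as a background result with the attribution \cite[Theorem 1.4]{MR2004457} and no proof is given or even sketched. So there is no ``paper's own proof'' to compare your proposal against. Your outline is a reasonable summary of the Fomin--Zelevinsky strategy from the cited source (denominator vectors versus almost positive roots for the ``if'' direction, the $2$-finite diagram classification for the ``only if'' direction), but as far as this paper is concerned the theorem is simply imported as a known fact and used later via Proposition~\ref{Prop: FC type}.
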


\begin{theorem} [{\cite[Theorem 4.3.1]{ICT21}}] \label{Thm: cm basis}
Let $A$ be a cluster algebra of finite cluster type. The cluster monomials of $A$ form a linear basis of $A$.
\end{theorem}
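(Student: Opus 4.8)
My plan is to prove the two halves of the statement separately: that the cluster monomials of $A$ are $\Z$-linearly independent, and that they span $A$ over $\Z$. First I would use the finite type classification quoted above to fix, once and for all, an exchange matrix $\caB$ of $A$ whose Cartan counterpart $C(\caB)$ is a block sum of finite-type Cartan matrices; on the exchangeable part the associated quiver $Q_\caB$ is then a disjoint union of orientations of Dynkin diagrams, in particular it has no oriented cycles, and $A$ has only finitely many cluster variables and finitely many seeds. All cluster monomials can therefore be taken to be products of cluster variables within one of this finite list of seeds.

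For linear independence I would invoke $g$-vectors. Each cluster monomial $x^{\bfa}$ expands, via the Laurent phenomenon, as a Laurent polynomial in the fixed initial cluster $\{x_i\}_{i\in\caI}$ of the form $x^{\mathbf{g}}(1 + \text{(higher-order terms)})$ with respect to a suitable term order, where $\mathbf{g} = \mathbf{g}(x^{\bfa})\in\Z^{\caIe}$ is its $g$-vector; this is standard in finite cluster type, where $g$-vectors are well defined thanks to sign-coherence of $c$-vectors. In finite type the $g$-vectors of the cluster variables in a single seed form a $\Z$-basis of $\Z^{\caIe}$, and the resulting simplicial cones are unimodular and assemble into a complete fan (the normal fan of the generalized associahedron); hence the assignment (cluster monomial) $\mapsto$ ($g$-vector) is injective. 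A hypothetical nonzero $\Z$-linear relation among cluster monomials would then possess a unique $\prec$-minimal Laurent monomial with nonzero coefficient, which cannot be cancelled — a contradiction.

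For spanning I expect the real work. Since $A$ is generated as a $\Z$-algebra by its finitely many cluster variables, it suffices to show that every monomial in cluster variables lies in the $\Z$-span of the cluster monomials, and by induction on the number of factor-pairs that share no common seed this reduces to a single ``straightening relation'': if $x, x'$ are cluster variables lying in no common seed, then $xx'$ is a $\Z$-linear combination of cluster monomials. For $C(\caB)$ of rank $\le 2$ (types $A_1\times A_1$, $A_2$, $B_2$, $G_2$) this follows from the explicit finite list of cluster variables and their exchange relations. For the general case I would either propagate the rank-$\le 2$ relations along the exchange graph, or — more cleanly — pass to the cluster category of the acyclic Dynkin seed fixed above and use the Caldero–Chapoton cluster character: indecomposable rigid objects correspond to cluster variables and arbitrary objects to cluster monomials (a representation-finite hereditary algebra of Dynkin type has only rigid indecomposables), while the multiplication formula for the cluster character rewrites products of cluster variables as $\Z$-combinations of cluster monomials, yielding spanning at once; the coefficient/frozen directions are absorbed by the appropriate enhancement of the cluster category. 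The main obstacle is exactly this last step: linear independence is essentially formal once one has the $g$-vector fan, whereas controlling products of cluster variables coming from unrelated seeds genuinely requires either a finite case analysis or the additive categorification.
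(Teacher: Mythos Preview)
The paper does not prove this theorem at all: it is stated without proof and attributed to \cite[Theorem 4.3.1]{ICT21}. There is therefore nothing to compare your argument against within the present paper; it is used as a black box (in the proof of Proposition~\ref{Prop: FC type}).

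Your outline is a reasonable sketch of how such a result is established in the literature, and the overall architecture (linear independence via the $g$-vector fan, spanning via categorification and the cluster character multiplication formula) is sound. A few points would need tightening if you were to write it out in full. For linear independence, distinct $g$-vectors alone do not give independence; you need that the $g$-vector really is the unique $\prec$-minimal exponent in the Laurent expansion, which relies on the $F$-polynomials having constant term $1$ (equivalently, sign-coherence), so that input should be stated explicitly. For spanning, the phrase ``the coefficient/frozen directions are absorbed by the appropriate enhancement of the cluster category'' is doing a lot of work: the Caldero--Chapoton map in the coefficient-free simply-laced case is clean, but handling arbitrary frozen variables and the skew-symmetrizable (non-simply-laced) finite types requires either folding or the generalized cluster categories with coefficients, and this is where most of the technical content lies. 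Your inductive reduction to a single straightening relation $xx'$ for incompatible $x,x'$ is correct in spirit, but the induction needs to be set up carefully so that the right-hand side of each straightening step is again a combination of cluster monomials of strictly smaller complexity.
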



\subsection{Affine Lie algebras and quantum affine algebras}
This subsection provides a brief overview of Kac-Moody algebras and their quantum counterparts, with a particular focus on the affine case.
For detailed information on Kac-Moody algebras, refer to \cite{MR1104219}.
Let $ (\cmA, \wlP, \Pi, \wlPdual, \Pi^\vee)$ be a \emph{Cartan datum} consisting of a \emph{symmetrizable generalized Cartan matrix} $\cmA = (a_{i,j})_{i,j\in I}$,  a free abelian group $\wlP$, called the \emph{weight lattice}, its dual $\wlPdual:=\Hom(\wlP, \Z)$, the set $\Pi=\{\al_i \}_{i\in I}\subset \wlP$ of \emph{simple roots} and the set $\Pi^\vee =\{h_i \}_{i\in I}\subset \wlPdual$ of \emph{simple coroots}.
They meet the following requirements:
\begin{itemize}
    \item Both \(\Pi\) and \(\Pi^\vee\) are linearly independent.
    \item The condition \(\langle h_i, \alpha_j \rangle = a_{i,j}\) holds for all \(i, j \in I\).
    \item \(|I| - \rk \cmA = \rk_{\Z} \wlP - |I|.\)
\end{itemize}
Here, \(\langle \cdot, \cdot \rangle\) denotes the canonical pairing \(\wlP \times \wlPdual \to \Z\) and \(\wlPdual \times \wlP \to \Z\).
We define \(\rlQ := \bigoplus_{i \in I} \Z \alpha_i\), known as the \emph{root lattice}. 
Additionally, we introduce the complex vector spaces \(\h := \wlPdual \otimes \mathbb{C}\) and \(\hdual := \wlP \otimes \mathbb{C}\).
We call $\Pplus: = \{\la \in \wlP \mid \langle \la, h_i\rangle \in \Znat \text{ for all } i \in I\}$ the set of \emph{dominant integral weights}.

With this datum, we define an associated Lie algebra $\g$, known as the \emph{Kac-Moody algebra}, which is denoted by \(\g(\cmA)\) in \cite{MR1104219}. Under the assumption that $\cmA$ is symmetrizable, \(\g\) can be described by the following presentation; see \cite[Theorem 9.11]{MR1104219}.
\begin{definition} 
The {\em Kac-Moody algebra} $\g$ associated with  $(\cmA,\wl,\Pi,\wl^\vee,\Pi^\vee)$ is the Lie algebra 
over $\C$ generated by $e_i,f_i$ $(i \in I)$ and $h \in  \wl^{\vee}$ satisfying following relations:
\bnum
\item \( [h, h'] = 0 \quad \) for \( h, h' \in \wl^{\vee}, \)
\item \( [e_i, f_j] = \delta_{i,j} h_i \quad \text{for } i, j \in I, \)
\item \( [h, e_i] = \langle h, \alpha_i \rangle e_i, \quad [h, f_i] = -\langle h, \alpha_i \rangle f_i \quad \text{for } i\in I, h \in \wl^{\vee}, \)
\item \( (\ad e_i)^{1 - a_{i,j}} e_j = (\ad f_i)^{1 - a_{i,j}} f_j = 0 \quad \text{for } i \ne j. \)
\ee
\end{definition}

There exists a $\Q$-valued, non-degenerate symmetric bilinear form \((\cdot | \cdot)\) on $\wlP$ satisfying
\((\al_i| \al_i)>0\) for all \( i \in I \), and for any $\la \in \wlP$,
\[
\langle h_i, \la \rangle = \frac{2 (\al_i| \la)}{(\al_i| \al_i)}.
\]
The \emph{Weyl group} \( W \) of \(\g\) is the subgroup of \(\Aut(\wlP)\) generated by the \emph{fundamental reflections} \( s_i \) for \( i \in I \), where
\begin{equation}
s_{i}(\la) = \la - \left\langle h_{i}, \la \right\rangle \alpha_{i} \quad \text{for } \la \in \wlP.
\label{eq:weyl_si}
\end{equation}
It turns out that $W$ is a Coxeter group, and the bilinear form \((\cdot | \cdot)\) on $\wlP$ is $W$-invariant.

Now we turn to a quantum deformation of the universal enveloping algebra $U(\g)$ of a Kac-Moody algebra $\g$.
Let $q$ be an indeterminate and $\cor$ the algebraic closure of the subfield $\C(q)$
in $\corh\seteq\bigcup_{m >0}\C((q^{1/m}))$. 
For each $i\in I$, set $q_i = q^{(\alpha_i\mid \alpha_i)/2}$, and for any $m,n\in \Znat$, define
\begin{equation*}
 \begin{aligned}
 \ &[n]_i =\frac{ q^n_{i} - q^{-n}_{i} }{ q_{i} - q^{-1}_{i} }, \quad 
 \ &[n]_i! = \prod^{n}_{k=1} [k]_i , \quad 
 \ &\left[\begin{matrix}m \\ n\\ \end{matrix} \right]_i=  \frac{ [m]_i! }{[m-n]_i! [n]_i! }.
 \end{aligned}
\end{equation*}
We set $\mathsf{d}$ to be the smallest positive integer such that 
$\mathsf{d} \frac{(\al_i| \al_i)}{2}\in\Z$  for all $i\in I$.
Note that $\mathsf{d}=1$ if $(\al_i | \al_i) \in 2\Z_{>0}$ for all $i\in I$.

\begin{definition} \label{Def: GKM}
The {\em quantum group} $\uqg$ associated with  $(\cmA,\wl,\Pi,\wl^\vee,\Pi^\vee)$ is the associative algebra over $\cor$ generated by $e_i,f_i$ $(i \in I)$ and
$q^{h}$ $(h \in  \mathsf{d}^{-1} \wl^{\vee})$ satisfying following relations:
\bnum
\item  $q^0=1, \  q^{h} q^{h'}=q^{h+h'} $\quad for $ h,h' \in \mathsf{d}^{-1} \wl^{\vee},$
\item  $q^{h}e_i q^{-h}= q^{\langle h, \alpha_i \rangle} e_i$,
$q^{h}f_i q^{-h} = q^{-\langle h, \alpha_i \rangle }f_i$\quad for $h \in \mathsf{d}^{-1}\wl^{\vee}, i \in I$,
\item  $e_if_j - f_je_i =  \delta_{i,j} \dfrac{K_i -K^{-1}_i}{q_i- q^{-1}_i }, \ \ \text{ where } K_i=q_i^{ h_i},$
\item  $\displaystyle \sum^{1-a_{ij}}_{k=0}
(-1)^ke^{(1-a_{ij}-k)}_i e_j e^{(k)}_i =  \sum^{1-a_{ij}}_{k=0} (-1)^k
f^{(1-a_{ij}-k)}_i f_jf^{(k)}_i=0 \quad \text{ for }  i \ne j, $
\ee
where 
$e_i^{(k)}=e_i^k/[k]_i!$ and $f_i^{(k)}=f_i^k/[k]_i!$.
\end{definition}

For a given Cartan datum, there is a category \(\RO\) (resp. \(\RO^q\)) of \emph{integrable representations} for \(\g\) (resp. \(\uqg\)).
For each dominant integral weight \(\la \in \Pplus\), there exists a corresponding irreducible highest weight representation \(L(\la)\) (resp. \(L^q(\la)\)) with highest weight \(\la\). Each such module belongs to \(\RO\) (resp. \(\RO^q\)), and every irreducible module in the category is isomorphic to one of these modules. Moreover, every object in \(\RO\) (resp. \(\RO^q\)) is isomorphic to a direct sum of these irreducible highest weight representations.

From now on, let $\cmA$ be a generalized Cartan matrix of untwisted affine type $X^{(1)}_{\rank}$, where $X=A,B,\dots, G$. Specifically, its Dynkin diagram corresponds to one of those listed in \cite[Table Aff 1]{MR1104219}. 
This diagram is obtained by adding an extra vertex to the diagram of finite type $X_{\rank}$, as shown in \cite[Table Fin]{MR1104219}, and is commonly referred to as the \emph{extended Dynkin diagram}.
For the index set, we write \( I = \{0\} \sqcup I_0 \), where $0\in I$ corresponds to the added vertex in the extended Dynkin diagram.

Note that \( \cmA \) has rank \( N \), so its corank is 1. There is a sequence of integers \((\sfa_i)_{i \in I}\), often referred to as \emph{marks} in the literature, which satisfy
\[
\sum_{j \in I} a_{i,j} \sfa_j = 0 \quad \text{for all } i \in I,
\]
with the normalization \(\sfa_0 = 1\). These values are numerical labels provided in \cite[Table Aff 1]{MR1104219}.
Similarly, the \emph{comarks} \((\cosfa_i)_{i \in I}\) are a sequence of integers that satisfy
\[
\sum_{j \in I} a_{j,i} \cosfa_j = 0 \quad \text{for all } i \in I,
\]
with the normalization \(\cosfa_0 = 1\).
The numbers \( h = \sum_{i \in I} \sfa_i \) and \( \hvee = \sum_{i \in I} \cosfa_i \) are called the \emph{Coxeter number} and the \emph{dual Coxeter number}, respectively.

Let us fix a Cartan datum as follows.
Let $\wlPdual$ be a free $\Z$-module with basis $\{h_0,\dots, h_{\rank}, \der\}$, and then
define the dual lattice $\wlP: = \Hom(\wlPdual, \Z)$.
For each $i\in I$, we define the \emph{fundamental weights} $\La_i\in \wlP$ as the linear functionals satisfying
$\langle \La_i, h_j \rangle=\delta_{i,j}$ for $j\in I$ and $\langle \La_i, \der\rangle = 0$.
We then choose $\nullroot\in \wlP$ such that 
\begin{equation}
\langle \nullroot, h_i \rangle = 0 \quad \text{for all } i \in I,
\label{eq:delta_hi}
\end{equation}
and $\langle \nullroot, \der \rangle=1$.
In other words, the ordered set $\{\La_0, \dots, \La_{\rank},\nullroot\}$ is a $\Z$-basis of $\wlP$, dual to the ordered basis $\{h_0,\dots, h_{\rank}, \der\}$ of $\wlPdual$.
For each \(i \in I\), define \(\alpha_i \in \wlP\) so that \(\langle \alpha_j, h_i \rangle = a_{i,j}\) for any \(j \in I\), along with \(\langle \alpha_0, \der \rangle = 1\) and \(\langle \alpha_i, \der \rangle = 0\) for \(i \in I_0\).
Specifically, we have
\begin{equation}
\alpha_0 = \nullroot + \sum_{i \in I} a_{i,0} \La_{i}, \quad \alpha_j = \sum_{i \in I} a_{i,j} \La_{i} \quad \text{for } j \in I_0.
\label{eq:alpha_in_La}
\end{equation}
With these choices, \((\cmA, \wlP, \Pi, \wlPdual, \Pi^\vee)\) forms a Cartan datum.
The relation $\nullroot = \sum_{i\in I}\sfa_i \alpha_i \in \wlP$ holds, and $\nullroot$ is referred to as the \emph{imaginary root}.
Additionally, we define the \emph{canonical central element} $\cent := \sum_{i\in I}\cosfa_i h_i \in \wlPdual$.

Let \(\g := \g(\cmA)\) denote the affine Kac-Moody algebra associated with the Cartan datum \((\cmA, \wlP, \Pi, \wlPdual, \Pi^\vee)\), referred to as the \emph{affine Lie algebra}.
This datum also gives rise to a corresponding \emph{quantum affine algebra} $\uqg$.
In this paper, we will also use the notation $\ghat$ for $\g$, which is common in the literature, while $\g_0$ will be discussed in the subsequent paragraphs.

We define the \emph{level} of \(\La \in \wlP\) as the number
\begin{equation}\label{eq:level}
\levk := \langle \La, \cent \rangle = \sum_{i \in I} \cosfa_i \langle \La, h_i \rangle.
\end{equation}
The canonical central element \(\cent\in \g\) acts on \(L(\La)\) for \(\La \in \Pplus\) as a scalar equal to the level of \(\La\). This value is also referred to as the level of \(L(\La)\).

The set \( I_0 = I\setminus\{0\} \) serves as the index set for the underlying simple finite-dimensional Lie algebra \(\gzero\) of type \( X_{\rank} \).
Specifically, $\gzero$ is the Lie subalgebra of $\g$ generated by the Chevalley generators $e_i$, $f_i$  and $h_i$ for $i \in I_0$.
Analogously, $\uqgzero$ is the $\cor$-subalgebra of $\uqgzero$ generated by $e_i,f_i,K^{\pm 1}_i$ for $i \in I_0$.
They can also be described as a Kac-Moody algebra and its quantum counterpart as follows.
Let $\wlPdual_0$ denote the $\Z$-submodule of $\wlP$ with basis $\{h_i\}_{i\in I_0}$.
For $\la \in \wlP =\Hom(\wlPdual, \Z)$, let $\ol{\la}\in \Hom(\wlPdual_0, \Z)$ be the restriction of
$\la$ to $\wlPdual_0$.
Define
\begin{equation}
\wlP_0 := \oplus_{i\in I_0} \Z \ol{\La}_i, \quad 
\Pi_0 := \{\ol{\alpha}_i\}_{i\in I_0}, \quad
\text{and} \quad 
\Pi_0^\vee := \{h_i\}_{i\in I_0}.
\label{eq:finite_cartan}
\end{equation}
Then \(\gzero\) and \(\uqgzero\) are the Kac-Moody algebra and the corresponding quantum group associated with the Cartan datum \((\cmA_0, \wlP_0, \Pi_0, \wlPdual_0, \Pi_0^\vee)\), respectively.
Throughout paper, we will also use the notation \(\poid_i := \ol{\La}_i\) for \(i \in I_0\).

Let \(\repg\) and \(\repuqgzero\) denote the categories of integrable representations for \(\gzero\) and \(\uqgzero\), respectively. The objects in those categories are finite-dimensional.
The Grothendieck rings \(K_0(\repg)\) and \(K_0(\repuqgzero)\) are both \(\Z\)-rings, with bases \(\{[L(\la)] \mid \la \in P_0\}\), and \(\{[L^q(\la)] \mid \la \in P_0\}\), respectively.
There exists a ring isomorphism that maps \([L^q(\la)]\) to \([L(\lam)]\):
\begin{equation}\label{eq:iso_hom}
K_0(\repuqg) \xrightarrow{\cong} K_0(\repg).
\end{equation}

Throughout the paper, we will also denote the simple \(\uqgzero\)-module \(L^q(\la)\) as \(L(\la)\) whenever the context is clear.

Let $\g': = [\g, \g]$ denote the derived subalgebra of $\g$.
It is a subalgebra generated by $e_i,f_i,h_i$ for $i \in I$, and is isomorphic to
\[
(\C[t, t^{-1}]\otimes_{\C}\gzero)\oplus \C \cent,
\]
a central extension of the loop algebra, equipped with a certain Lie bracket.
This is also called the affine Lie algebra.
Analogously, we define $\uqpg$ to be the $\cor$-subalgebra of $\uqg$ generated by $e_i,f_i,K^{\pm 1}_i$ for $i \in I$.
The key distinction between the algebras \( \uqg \) and \( \uqpg \) is that \( \uqpg \) has a rich theory of finite-dimensional representations, whereas any non-trivial representation of \( \uqg \) is infinite-dimensional. 
In the literature, \( \uqpg \) is also known as the quantum affine algebra.
This paper focuses on certain finite-dimensional representations of \( \uqpg \); see \secref{sec:Cell} for additional details.

We now turn to the action of Weyl groups. We define \(\Pcl := \wlP / (\wlP \cap \mathbb{Z} \nullroot)\). 
When there is no risk of confusion, we will simply write \(\La_i\) for the image of \(\La_i\in \wlP \) under the quotient map \(\wlP \to \Pcl\).
The level of an element in \(\Pcl\) is defined in the same way as in \eqref{eq:level}.
Note that, from \eqref{eq:weyl_si} and \eqref{eq:delta_hi}, we have $w(\nullroot) = \nullroot$ for all $w\in W$.
Then, the \emph{affine Weyl group} $W$ acts on both $\wlP$ and $\Pcl$.
In particular, the fundamental reflection $s_i$ for $i\in I$ acts linearly on $\Pcl$ by 
\begin{equation}
s_i \La_j=\La_j -\delta_{i,j}\alpha_i \pmod{\Z \nullroot},\quad j\in I.
\label{eq:fundamental_reflection_affine}
\end{equation}
Using \eqref{eq:alpha_in_La}, we can express this action entirely in terms of $\cmA$ and the fundamental weights.
Let $\rho:=\sum_{i\in I}\La_{i}\in \Pcl$ be the \emph{affine Weyl vector}.
The \emph{shifted Weyl group action} of $W$ on $\Pcl$ is given by
\begin{equation}\label{eq:shifted_action}
w \cdot \la := w(\la + \rho) - \rho,
\end{equation}
for $\la \in \Pcl$.
Finally, the \emph{finite Weyl group} \(W_0\), generated by \(\{s_i\}_{i \in I_0}\), acts on the weight lattice \(\wlP_0\) \eqref{eq:finite_cartan} of \(\gzero\). The fundamental reflection \(s_i\) for \(i \in I_0\) acts on \(\wlP_0\) by
\begin{equation}
s_i \poid_j = \poid_j - \delta_{i,j} \ol{\alpha}_i,\quad j\in I_0
\label{eq:fundamental_reflection_finite}
\end{equation}
which can be expressed explicitly in terms of $\cmA_0$ and \(\{\poid_i\}_{i \in I_0}\).

\subsection{Verlinde rings}\label{subsec:verlinde}
This subsection provides a brief introduction to the Verlinde ring associated to an affine Lie algebra $\g$ and an integer $\levk \ge 1$.
For a detailed systematic account, refer to \cite{MR4327093}. 
For additional insight from a conformal field theory perspective, see \cite[Chapters 14 and 16]{MR1424041}.

Let \(\Pk\) denote the subset of \(\Pcl\) consisting of elements with level \(\levk\).
We define 
\[
\Pkp := \{\la \in \Pk \mid \la=\sum_{i\in I}c_{i}\La_{i},\,c_{i}\in \Znat\}.
\]
An element $\la=\sum_{i\in I}c_{i}\La_{i}\in \Pcl$ belongs to $\Pkp$ if and only if
\[
\sum_{i\in I}\cosfa_{i}c_{i}=\levk,\quad c_i\in \Znat.
\]
Hence, $\Pkp$ is a finite set. 
Note that there is a bijection $\Pk\to \wlP_0$ given by the restriction $\la \mapsto \ol{\la}$:
\begin{equation}
\la = \sum_{i \in I} c_i \La_i \mapsto \overline{\la} = \sum_{i \in I_0} c_i \poid_i.
\label{eq:wt_proj}
\end{equation}

For a given $\poid\in \wlP_0$, we can find a unique element $\la \in \Pk$ such that $\ol{\la} = \poid$.
Under this bijection, $\Pkp$ corresponds to the dominant integral weights in the fundamental alcove:
\[
\{\poid \in \wlP_0 \mid \langle h_i, \poid \rangle \ge 0 \text{ for each } i \in I_0 \text{ and } \langle \theta^{\vee}, \poid \rangle \le \levk\},
\]
where $\theta^{\vee} := \sum_{i \in I_0} \cosfa_i h_i$.
In this subsection, we adopt a specific normalization for a non-degenerate symmetric bilinear form \((\cdot| \cdot)_0\) on \(\wlP_0\) so that \((\theta| \theta)_0 = 2\), where \(\theta = \sum_{i \in I_0} \sfa_i \ol{\alpha}_i\) is the highest root of \(\gzero\).

Given a pair $\la, \mu \in \Pk$, define
\begin{equation}\label{eq:modularS}
S_{\la,\mu}:=C(\cmA,\levk) \cdot \sum_{w\in W_0} (-1)^{\ell(w)}\exp \left(-{\frac{2\pi \imag ( w(\ol{\la}+\ol{\rho})| \ol{\mu}+\ol{\rho})_0}{\levk+\hvee}}\right),
\end{equation}
where $C(\cmA,\levk)$ is a certain normalizing constant that depends only on the Cartan datum and not on $\la$ and $\mu$; see \cite[Theorem 13.8]{MR1104219} for an explicit expression.
Note that the sum is taken over the finite Weyl group $W_0$.
The matrix \(S = (S_{\la, \mu})_{\la, \mu \in \Pkp}\) is referred to as the \emph{modular \(S\)-matrix}. 
This quantity plays a role, for example, in describing the modular transformation properties of characters of integrable highest weight representations at level \(\levk\) for affine Lie algebras; see \cite{MR750341} and \cite[Theorem 13.8]{MR1104219}. This connection explains its name.

Here, we provide a summary of several key properties of the modular \(S\)-matrix.

(1). Symmetry: The modular \(S\)-matrix is symmetric, so for \(\la, \mu \in \Pk\),
\[
S_{\la,\mu} = S_{\mu,\la}.
\]

(2). Unitarity: The modular \(S\)-matrix is unitary:
\begin{equation}\label{eq:unitarity}
SS^{\dagger} = I_n,
\end{equation}
where \(S^{\dagger}\) denotes the transpose of the complex conjugate of \(S\), and \(I_n\) is the identity matrix of size \(n = |\Pkp|\).

(3). Weyl group symmetry: For \(w \in W\) and \(\la, \mu \in \Pk\), the following relations holds:
\begin{equation}\label{eq:shiftWS}
S_{w \cdot \la, \mu} = (-1)^{\ell(w)} S_{\la, \mu}.
\end{equation}
Here, the shifted action of $W$ defined in \eqref{eq:shifted_action} is used.

(4). Conjugation symmetry: For \(\la, \mu \in \Pk\), we have
\begin{equation}\label{eq:longestconj}
S_{\la^{*}, \mu} = S^{*}_{\la, \mu},
\end{equation}
where $S^{*}_{\la, \mu}$ is the complex conjugate of $S_{\la, \mu}$ and $\la^*\in \Pk$ is defined as follows:
For \(\poid \in \wlP_0\), let \(\poid^{*} := -w_0 \poid \in \wlP_0\), where \(w_0\) is the longest element of the finite Weyl group \(W_0\).
For an affine weight \(\la \in \Pk\), its conjugate \(\la^{*} \in \Pk\) is the unique element of \(\Pk\) such that \(\ol{\la^{*}} = \ol{\la}^{*}\). 

The simple objects in the category $\fusk$ of integrable representations of level $\levk$ over the affine Lie algebra $\ghat'$ are indexed by $\Pkp$, and any object can be written as a direct sum of these simple objects (\cite[Theorem 10.7]{MR1104219}).
However, the tensor product of two objects in \(\fusk\), considered as representations of the Lie algebra, does not belong to \(\fusk\) because its level becomes \(2\levk\).

Surprisingly, there exists a different kind of product on $\fusk$, called the \emph{fusion product}.
The \emph{Verlinde ring} $K_0(\fusk)$, also known as the \emph{WZW fusion ring} or simply the \emph{fusion ring}, is the Grothendieck ring of the category $\fusk$ of integrable representations $\levk$ over $\ghat'$.
Specifically, it is a free $\Z$-module with basis $\{[L(\la)] :\la \in \Pkp\}$, endowed with the product
\[
[L(\la)]\cdot [L(\mu)]=\sum_{\nu \in \Pkp}\fusN[L(\nu)],
\]
where $\fusN\in \Znat$ denotes the dimension of the \emph{conformal block} $V_{\Pbb^1}(\la,\mu,\nu^{*})$ on $\Pbb^1$; see \cite{MR1360497, MR4327093}.
The coefficients \(\fusN\) are called \emph{fusion coefficients}.

It is a commutative, associative ring with unity \([L(\levk \La_0)]\).
The associativity is closely related to the factorization property of the space of conformal blocks; see \cite[Chapters 3 and 4]{MR4327093}.
The fusion coefficients can be written in terms of the modular \(S\)-matrix via the \emph{Verlinde formula}:  
\begin{equation}\label{eq:Verlinde}
\fusN = \sum_{\omega \in \Pkp} \frac{S_{\la, \omega} S_{\mu, \omega} S^{*}_{\nu, \omega}}{S_{\levk \La_0, \omega}}.
\end{equation}
Note that although the normalizing factor in \eqref{eq:modularS} has no effect on \eqref{eq:qdim} below, it is necessary for the validity of \eqref{eq:Verlinde}.

\begin{definition}\label{def:verlinde_positivity}
An element $\sum_{\la \in \Pkp}c_{\la}[L(\la)]\in K_0(\fusk)$ is \emph{non-negative} if $c_{\la}\ge 0$ for all $\la \in \Pkp$.
A non-zero element of $K_0(\fusk)$ is \emph{positive} if it is non-negative.
\end{definition}

Although the fusion product has its origins in conformal field theory \cite{MR954762} and is mathematically defined in a rather complicated algebro-geometric way, there is a way to handle it using only elements from Lie theory, while avoiding the need for explicit computation of the $S$-matrix, which is computationally demanding.
Specifically, there exists a surjective ring homomorphism $\pik : K_0(\repg)\to K_0(\fusk)$.
We provide a description of this map below.
The Grothendieck ring $K_0(\repg)$ of finite-dimensional representations of $\gzero$ is a free $\Z$-module with basis $\{[L(\poid)] \mid \poid \in \wlP_{0,+}\}$.
Thus, it is sufficient to specify
$\pik([L(\poid)])$ to define a ring homomorphism.

For a given \(\la \in \Pk\), either:  
\begin{itemize}
\item There is no non-trivial element \(w \in W\) such that \(w \cdot \la = \la\),  
\item or there exists a non-trivial element \(w \in W\) such that \(w \cdot \la = \la\).
\end{itemize}
We define \(\la \in \Pk\) as \emph{null} if the latter condition holds.
If the first condition holds, then there exists a unique $\la'\in \Pkp$ such that
\begin{equation}\label{eq:alcoverep}
\la'=w\cdot \la
\end{equation}
for some unique $w\in W$. 
Then $\pik : K_0(\repg)\to K_0(\fusk)$ can be defined as follows:
\begin{equation}\label{eq:Valcove}
\pik([L(\poid)]):=
\begin{cases} 
0 & \text{if $\la$ is null,}\\ 
(-1)^{\ell(w)}[L(\la')] & \text{otherwise},
\end{cases}
\end{equation}
where $\poid\in \wlP_{0,+}$, and $\la \in \Pk$ is the unique affine weight of level $\levk$ such that $\ol{\la} = \poid$, which is not necessarily in $\Pkp$.
In case $\la$ is not null, $\la'\in \Pkp$ and $w\in W$ are given as in \eqref{eq:alcoverep}.
The existence of this homomorphism is established in \cite{MR1360497}; 
see also \cite[Theorem 4.2.9]{MR4327093} and the references therein for further details.
This enables expressing the fusion product in terms of the tensor product in \(\repg\).
For \(\La_1, \La_2 \in \Pkp\), the product \([L(\La_1)] \cdot [L(\La_2)]\) is given by:
\[
[L(\La_1)] \cdot [L(\La_2)] = \pik\left([L(\overline{\La_1})]\right) \cdot \pik\left([L(\overline{\La_2})]\right) = \pik\left([L(\overline{\La_1}) \otimes L(\overline{\La_2})]\right).
\]

Simply put, determining the image of \(\pik\) involves moving a weight from the fundamental chamber to the fundamental alcove via the generators of $W$ while keeping track of the number of simple reflections applied.
This procedure is demonstrated in \exref{ex:alcove} and illustrated in \figref{fig:alcove_image}.

\begin{figure}[ht!]
    \centering
    \includegraphics[width=0.7\textwidth]{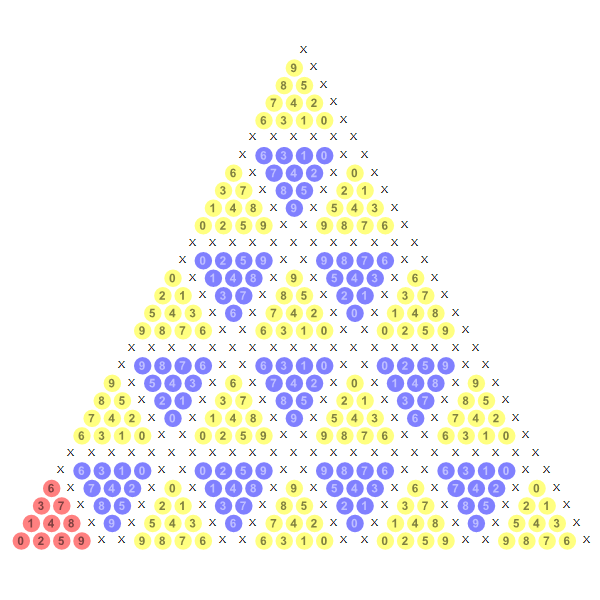}
    \caption{Visualization of \(\pik\); see \exref{ex:alcove}.}
    \label{fig:alcove_image}
\end{figure}

\begin{example}\label{ex:alcove}
Let \(\gzero\) be of type \(A_2\).  
The elements of \(\wlP_0\) can be represented as points in a hexagonal lattice in \(\mathbb{R}^2\), with basis vectors \(\poid_1 = (1, 0)\) and \(\poid_2 = \left(\frac{1}{2}, \frac{\sqrt{3}}{2}\right)\).  

Consider now \(\levk = 3\).  
Each element of \(\Pk\) is uniquely associated with a point in the hexagonal lattice through the bijection given in \eqref{eq:wt_proj}.
Among these, 10 elements belong to \(\Pkp\), with each \(\la \in \Pkp\) corresponding to a bottom-left lattice point marked in red in \figref{fig:alcove_image}.

For \(\poid = c_1\poid_1 + c_2\poid_2\) with \(c_1, c_2 \in \Z_{\ge 0}\) and the corresponding \(\la = (\levk - c_1 - c_2)\La_0 + c_1\La_1 + c_2\La_2\), \(\la\) is null if \(\poid\) is located at a lattice point crossed by \(\times\).  
Otherwise, $\poid$ is assigned a specific numbering, and \(\ol{\la'}\) in \eqref{eq:alcoverep} corresponds to a red point with the same numbering as \(\poid\).  
If \(\poid\) is marked in blue, then \(\pik(L(\poid)) = -L(\la')\); otherwise, \(\pik(L(\poid)) = L(\la')\).
\end{example}

For $\la \in \Pk$, we define
\begin{equation} \label{eq:qdim}
\ddD_{\la} : =
\frac{S_{\la, \hatzero}}{S_{\hatzero, \hatzero}}
=\frac{\prod_{\alpha\in \Delta_{+}}\sin \frac{\pi(\ol{\la}+\ol{\rho}|\alpha)_0}{\levk+\hvee}}{\prod_{\alpha\in \Delta_{+}}\sin \frac{\pi (\ol{\rho}| \alpha)_0}{\levk+\hvee}},
\end{equation}
where $\Delta_{+}$ denotes the set of positive roots for $\gzero$, and the last equality is a consequence of the Weyl denominator formula.
Especially, for $\la \in \Pkp$, we have $\ddD_{\la}>0$, referring to it as the \emph{quantum dimension} of $L(\la)$, which we denote by
$\qdimk L(\la)$.
If $\la \in \Pk$ is null, then $\ddD_{\la}=0$.
It turns out that $\qdimk:K_0(\fusk) \to \RR$, extended linearly from the values of $\qdimk L(\la)$ for $\la \in \Pkp$, is a ring homomorphism.
This can be seen from the Verlinde formula \eqref{eq:Verlinde}, and the unitarity \eqref{eq:unitarity} of the $S$-matrix.
Furthermore, any positive element in $K_0(\fusk)$ has a positive quantum dimension.

\section{The categories $\Cell$ over quantum affine algebras} \label{sec:Cell}
Let $\g$ be an affine Lie algebra of untwisted type.
In this subsection, we consider a certain subcategory of the category of finite-dimensional integrable $\uqpg$-modules, which was introduced by Hernandez and Leclerc in \cite{MR2682185,MR3500832} from the viewpoint of cluster algebras.

Let $\gf$ be the simple Lie algebra of finite simply-laced type associated with $\uqpg$ (see \cite{KKOP22} and \cite[Section 6.1]{KKOP24}), $\cmA_{{\fin}} $ the Cartan matrix of $\gf$ and $I_\fin$ the index set of $\cmA_{{\fin}}$. 
A $Q$-datum $\mathcal{Q} = (\Dynkin, \sigma, \xi) $ is a triple consisting of the \emph{Dynkin diagram} $\Dynkin$ of $\gf$, the \emph{folding automorphism} $\sigma$ between $\gf$ and $\gzero$,  and a \emph{height function} $\xi\cl \If \rightarrow \Z$ (see \cite{FO21} and refer to \cite[Section 6.1]{KKOP24} for precise definitions). 
The folding automorphism $\sigma$ induces a surjection $\pi\cl \If \twoheadrightarrow I_0$. Define
\[
\sigma^\circ(\g) \seteq \{  (\im, p) \in \If \times \Z \mid p-\xi(\im) \in 2 d_\im \Z  \},
\]
where $d_\im = | \pi^{-1} (\pi(\im))| $. 
For $ i \in I_0 $, we set 
\begin{equation}
d_i := |\pi^{-1}(i)|
\label{eq:d_i}
\end{equation}
so that $d_\im = d_{\pi(\im)}$.

We denote by $\Cg$ the category of finite-dimensional integrable $\uqpg$-modules.
It is known that the isomorphism classes of simple modules in $\Cg$ are parameterized by the set $(1+z\cor[z])^{I_0}$ of $I_0$-tuples of monic polynomials in $z$, which are called \emph{Drinfeld polynomials} (\cite{MR1357195, MR1300632}).
They form a $\Z$-basis of the Grothendieck ring $K_0(\Cg)$.
Because $\uqgzero$ is a subalgebra of $\uqpg$, any object in $\Cg$ also becomes a $\uqgzero$-module by restriction, inducing a ring homomorphism
\begin{equation}\label{eq:res_hom}
\res : K_0(\Cg) \to K_0(\repuqgzero).
\end{equation}
Let $\mathcal{Y} := \Z[Y_{\pi(\im),p}^{\pm 1} \mid (\im,p) \in \sigma^\circ(\g)]$ be the Laurent polynomial ring in formal variables $Y_{\pi(\im),p}$.
We write a monomial $m \in \mathcal{Y}$ as 
\begin{align*}
m = \prod_{(\im,p) \in \sigma^\circ(\g)} Y_{\pi(\im),p}^{u_{\pi(\im),p}(m)}.
\end{align*}
A monomial $m \in \mathcal{Y}$ is \emph{dominant} if $u_{\pi(\im),p}(m) \ge 0$ for all $(\im,p) \in \sigma^\circ(\g)$.
For a dominant monomial $m$, there is a simple module $L(m) \in \Cg$ associated with the Drinfeld polynomial $(\prod_p (1-q^pz)^{u_{\pi(\im),p}(m)})_{\pi(\im) \in I_0}$.
For any $i\in I_0$ and $p\in \Z$, we write $L(Y_{i,p})$ for
the \emph{$i$-th fundamental weight module} with \emph{spectral parameter} $q^p$.
For each $(i,m,p) \in I_0\times \Znat\times \Z$, let $\Wimp$ be the simple $\uqghatp$-module corresponding to the monomial
$Y_{i,p}Y_{i,p+2d_i} \cdots Y_{i,p+2(m-1)d_i}$.
We call $\Wimp$ a \emph{Kirillov-Reshetikhin module} (hereafter referred to as the KR module).

We define $\catCO$ to be the smallest full subcategory of $\Cg$ such that 
\bna
\item $\catCO$ contains $L(Y_{\pi(\im), p})$ for all $(\im, p) \in \sigma^\circ(\g)$,
\item $\catCO$ is stable under taking subquotients, extensions and tensor products. 
\ee
The category $\catCO$ is called the \emph{Hernandez-Leclerc} category. For $\ell \in \Znat$, we define 
\begin{align} \label{eq:sigma ell for C ell}
\sigma_\ell(\g) \seteq \{  (\im, p) \in \sigma^\circ(\g) \mid  \xi(\im) - 2d\ell - 2(d-1) \le p \le \xi(\im) \},
\end{align}
where $d = \max_{i\in I_0}\{ d_i \}$.
We define $\Cell$ to be the smallest full subcategory of $\catCO$ such that 
\bna
\item $\Cell$ contains $L(Y_{\pi(\im), p})$ for all $(\im,p) \in \sigma_\ell(\g)$,
\item $\Cell$ is stable under taking subquotients, extensions and tensor products. 
\ee

\begin{remark}\label{rmk:Cell}
The definition of $\Cell$ is slightly modified from that of the category $C_\ell$ introduced by Hernandez-Leclerc in \cite[Definition 3.1]{MR2682185} (and see also \cite{HL21}) for the purpose of the paper. When $\g$ is of simply-laced affine type and $\xi$ is a height function arising from a bipartite quiver, the category $\Cell$ coincides with Hernandez-Leclerc's category $C_\ell$.  
\end{remark}

It is shown in \cite{MR2682185} and \cite{MR3500832} that the Grothendieck ring $K_0(\Cell)$ has a cluster algebra structure.
Let us recall the initial seed of $K_0(\Cell)$ arising from KR modules. 
Let $\caI := \sigma_\ell(\g)$. Define $\caIf := \{ (\im, p_\im) \mid \im \in \If \} \subset \caI$, where
for each $\im \in \If$, $p_\im$ is the integer such that $(\im, p_\im) \in \caI$ and $p_\im \le r$ for any $(\im, r) \in \caI$. We now set $\caIe := \caI \setminus \caIf$ so that $\caI = \caIe \sqcup \caIf$.
For any $(\im, p) \in \caI$, we set
\[
z_{\im, p} := \prod_{k\ge0,\  p+2d_\im k \le \xi(\im)} Y_{\pi(\im), p+2d_\im k}
\]
and denote the corresponding KR module by
\[
M_{\im, p} := L(z_{\im, p}) \in \Cell.
\]
We define arrows between $(\im, r), (\jmath, s) \in \caI$ as follows:
\begin{align*}
(\im, r) \longrightarrow (\jmath, s) \quad  \Longleftrightarrow \quad
 a_{\pi(\im), \pi(\jmath)} \ne 0 \text{ and } s = r+d_\im a_{\pi(\im), \pi(\jmath)} + d_\jmath - d_\im,
\end{align*}
where  $ \cmA = (a_{i, j})_{i,j\in I}$ is the affine Cartan matrix.
Letting $\caB$ be the exchange matrix obtained from the quiver, the following
\begin{align} \label{Eq: initial seed}
\seed = ( \{[M_{\im, p} ]\}_{(\im, p) \in \caI},  \caB)
\end{align}
is an initial seed of the cluster algebra $K_0(\Cell)$.
A module \(M\) in \(\Cell\) is called \emph{frozen} if \(M\) is isomorphic to a tensor product of \(M_{\im, p}\)'s $( ( \im, p) \in \caIf$).

\begin{example} \label{Ex: AB 1} \
\bni
\item Let $U_q'(\g)$ be of affine type $A_3^{(1)}$. In this case, $\gf = \g_0$ is of type $A_3$,  
$\If = I_0 = \{1,2,3 \}$, $\pi = \id$,  and  $d_i = 1$ for $i\in \If$.
Since $\pi$ is the identity, we regard $\If = I_0$ as a subset of $I$. Define a height function $\xi\cl \If \rightarrow \Z$ by $\xi (1) = -1$, $\xi (2) = 0$ and $\xi (3) = 1$, and set $\ell=2$.
 We then have 
\begin{align*}
\sigma^\circ(\g) &= \{  (1, p_1), (2, p_2), (3, p_3) \mid p_1, p_3 \in 2\Z + 1,\ p_2 \in 2\Z \}, \\
\sigma_2(\g) & = \{  (\imath, p) \in \sigma_0(\g) \mid  \xi(\im) - 4 \le p \le \xi(\im) \}. 
\end{align*}
Here $\sigma_2(\g)$ can be drawn pictorially as follows:
$$
\scalebox{0.8}{\xymatrix@C=1ex@R=  0.0ex{ 
\imath \diagdown\, p     \ar@{-}[dddd]<3ex> \ar@{-}[rrrrrrrr]<-2.0ex>     & -5 & -4 & -3 & -2 & -1 & 0 & \ 1&    & &    \\
1          & \bullet  & &  \bullet & & \bullet &&      \\
2       &  &  \bullet  & &  \bullet  && \bullet &&    \\
3       &    & &  \bullet && \bullet && \bullet   \\
& 
}}
$$	

We now consider the cluster structure of $K_0(\Ctwo)$. In this case we have 
$
\caI = \sigma_2(\g) = \caIe \cup \caIf,
$ where 
\begin{align*}
\caIe &= \{ (1,-1), (1,-3), (2,0), (2,-2), (3,1), (3,-1) \}, \\
\caIf &= \{ (1,-5), (2,-4), (3,-3) \}. 
\end{align*}
Thus the KR modules in the initial seed are given as 
\begin{align*}
\xymatrix@C=0.7pc @R=.2pc{
M_{1, -1} = L(Y_{1,-1}), & M_{1, -3} = L(Y_{1,-3} Y_{1,-1}), &M_{1, -5} = L(Y_{1,-5} Y_{1,-3} Y_{1,-1}), \\
M_{2, 0} = L(Y_{2,0}), & M_{2, -2} = L(Y_{2,-2}Y_{2,0}), & M_{2, -4} = L(Y_{2,-4}Y_{2,-2}Y_{2,0}),\\ 
 M_{3, 1} = L(Y_{3,1}), & M_{3, -1} = L(Y_{3,-1}Y_{3,1}), & M_{3, -3} = L(Y_{3,-3}Y_{3,-1}Y_{3,1}),
}
\end{align*}
and the initial quiver is given as follows:
\begin{align*} \scriptsize
\xymatrix@R=.2pc{ 
 & & \ar[ld] [M_{3,1}]  \\
  & \ar[ld] \ar[rd] [M_{2,0}]&  \\
 \ar[rd]  [M_{1,-1}] & & \ar[ld] \ar[uu] [M_{3,-1}]  \\
 & \ar[ld] \ar[rd] \ar[uu] [M_{2,-2}] &  \\
  \ar[uu] [M_{1,-3}] \ar[rd] & & \ar[uu] \text{\begin{tabular}{|c|} \hline  $[M_{3,-3}]$\\  \hline \end{tabular}}  \\
  & \ar[uu] \text{\begin{tabular}{|c|} \hline  $[M_{2,-4}]$\\  \hline \end{tabular}} & \\
\ar[uu] \text{\begin{tabular}{|c|} \hline  $[M_{1,-5}]$\\  \hline \end{tabular}} & &  \\
}
\end{align*}
Here, the boxed vertices are frozen variables and the arrows between frozen variables are not drawn in the quiver.

\item Let $U_q'(\g)$ be of affine type $B_3^{(1)}$. In this case,  $\gf$ is of type $A_5$, $\If = \{1,2,3,4,5 \}$, and the map $\pi: \If \rightarrow I_0$ is given as follows:
\begin{align*}
	\small
& \xymatrix@R=0.2em@C=1.5em{
 &   1 \ar@{-}[r] \ar@{.}[dd] & 2 \ar@{-}[dr]  \ar@{.}[dd] \\
 A_{5} \ar[ddd]_\pi :  &&& 3  \ar@{.>}[ddd] \\
&    5 \ar@{-}[r] \ar@{.>}[dd] & 4 \ar@{-}[ru] \ar@{.>}[dd]  \\
&    &&&& \\
B_{3} :&   1 \ar@{-}[r] & 2 \ar@{=>}[r] & 3 
}
\end{align*}
Note that $d_\im = 2 - \delta_{\im, 3}$ for $\im\in \If$.
 Define a height function $\xi\cl \If \rightarrow \Z$ by $\xi (1) = -2$, $\xi (2) = 0$, $\xi (3) = 1$, $\xi (4) = 2$ and $\xi (5) = 4$, and set $\ell=2$. We then have 
 \begin{align*}
\sigma^\circ(\g) &= \{  (\im, p_\im) \in \If \times \Z \mid 
p_1, p_4 \in 4 \Z+2,\ p_2, p_5 \in 4 \Z, \ p_3 \in 2\Z +1   \}, \\
\sigma_2(\g) &= \{  (\im, p_\im) \in \sigma_0(\g) \mid  \xi(\im) - 10 \le p \le \xi(\im) \}.
\end{align*}
Here $\sigma_2(\g)$ can be drawn pictorially as follows:
$$ 
\scalebox{0.8}{\xymatrix@C=1ex@R=  0.0ex{ 
\imath \diagdown\, p     \ar@{-}[dddddd]<3ex> \ar@{-}[rrrrrrrrrrrrrrrrr]<-2.0ex>   & & -10 & -9 &  -8 & -7 & -6 & -5 & -4 & -3 & -2 & -1 & 0 & 1 & 2& 3 & 4&  & &    \\
1  && \bullet  & &   &  & \bullet  & &   & & \bullet &&  &&    \\
2  &&  && \bullet  & &  & & \bullet &  &    & &  \bullet  &&  &&  &&   \\
3  &&     &  \bullet & & \bullet & &  \bullet  & &  \bullet && \bullet && \bullet &&  & \\
4  &&    & &  & & \bullet & &    & &  \bullet &&  && \bullet &&  \\
5  &&    & &   & & & &  \bullet  & &   && \bullet &&  && \bullet \\
& 
}}
$$
Consider the cluster structure of $K_0(\Ctwo)$. In this case 
$\caI = \sigma_2(\g) = \caIe \cup \caIf $, where $\caIe= \caI \setminus \caIf $ and 
\begin{align*}
\caIf = \{ (1,-10), (2,-8), (3,-9), (4,-6), (5,-4) \}. 
\end{align*}
Thus the KR modules in the initial seed are given as 
\begin{align*}
\xymatrix@C=0.7pc @R=.2pc{
M_{1, -2} = L(Y_{1,-2}), & M_{1, -6} = L(Y_{1,-6} Y_{1,-2}), &M_{1, -10} = L(Y_{1,-10} Y_{1,-6} Y_{1,-2}), \\
M_{2, 0} = L(Y_{2,0}), & M_{2, -4} = L(Y_{2,-4} Y_{2,0}), &M_{2, -8} = L(Y_{2,-8} Y_{2,-4} Y_{2,0}), \\
M_{3, 1} = L(Y_{3,1}), & M_{3, -1} = L(Y_{3,-1}Y_{3,1}), & M_{3, -3} = L(Y_{3,-3}Y_{3,-1}Y_{3,1}),\\ 
M_{3, -5} = L(Y_{3,-5}\cdots Y_{3,1}), & M_{3, -7} = L(Y_{3,-7}\cdots Y_{3,1}), & M_{3, -9} = L(Y_{3,-9}\cdots Y_{3,1}),\\ 
 M_{4, 2} = L(Y_{2,2}), & M_{4, -2} = L(Y_{2,-2}Y_{2,2}), & M_{4, -6} = L(Y_{2,-6}Y_{2,-2}Y_{2,2}),\\
  M_{5, 4} = L(Y_{1,4}), & M_{5, 0} = L(Y_{1,0}Y_{1,4}), & M_{5, -4} = L(Y_{1,-4}Y_{1,0}Y_{1,4}),
}
\end{align*}
and the initial quiver is given as follows:
\begin{align*} \scriptsize
\xymatrix@R=.0pc{
 & &  & & \ar[ddl] [M_{5,4}]  \\
  & &  & &   \\
  & & & \ar[dddl] \ar[ddr] [M_{4,2}] & \\
  & & \ar[dl] [M_{3,1}] &  & \\
  & \ar[ddl] \ar[dddr] [M_{2,0}] &  &  & \ar[ddl] \ar[uuuu] [M_{5,0}] \\
 &  & \ar[dr] \ar[uu] [M_{3,-1}]  &  &  \\  
\ar[ddr] [M_{1,-2}] &  &  &  \ar[dddl]\ar[ddr] \ar[uuuu] [M_{4,-2}] &  \\ 
 &  & \ar[dl] \ar[uu] [M_{3,-3}] &   &  \\ 
 & \ar[uuuu] \ar[ddl] \ar[dddr] [M_{2,-4}] &  &   & \ar[uuuu] \text{\begin{tabular}{|c|} \hline  $[M_{5,-4}]$\\  \hline \end{tabular}}  \\  
&  & \ar[dr] \ar[uu] [M_{3,-5}] &   &  \\
\ar[uuuu] \ar[ddr] [M_{1,-6}] &  &  & \ar[uuuu] \text{\begin{tabular}{|c|} \hline  $[M_{4,-6}]$\\  \hline \end{tabular}}  &   \\ 
 &  & \ar[dl] \ar[uu] [M_{3,-7}] &   &   \\  
& \ar[uuuu] \text{\begin{tabular}{|c|} \hline  $[M_{2,-8}]$\\  \hline \end{tabular}} &  &   &   \\  
&  & \ar[uu] \text{\begin{tabular}{|c|} \hline  $[M_{3,-9}]$\\  \hline \end{tabular}} &   &   \\  
 \ar[uuuu] \text{\begin{tabular}{|c|} \hline  $[M_{1,-10}]$\\  \hline \end{tabular}} &  &  &   &   \\  
}
\end{align*}
Here, the boxed vertices are frozen variables and the arrows between frozen variables are not drawn in the quiver.

\ee
\end{example}

\section{Verlinde rings and cluster algebras}\label{sec:main_conj}
\subsection{Positivity conjecture}
Let $\gzero$ be a simple Lie algebra, and $\levk \ge 2$ be a positive integer.
From now on, we write $\ellk := \levk - 1$.
We consider the ring homomorphism \(\phik: K_0(\repuqghat) \to K_0(\fusk)\), defined as the composition of the homomorphisms 
\eqref{eq:res_hom}, \eqref{eq:iso_hom}, and \eqref{eq:Valcove}:
\begin{equation}\label{eq:phik}
\phik : K_0(\repuqghat) \xrightarrow{\res} K_0(\repuqgzero) \xrightarrow{\cong} K_0(\repg) \xrightarrow{\pik} K_0(\fusk).
\end{equation}
Note that one can define the $\RR$-valued map $\qdimk$ on each of the rings $K_0(\repuqghat)$, $K_0(\repuqgzero)$, and $K_0(\repg)$
by pulling back $\qdimk$ from $K_0(\fusk)$.

In this subsection, we present our main conjecture regarding the positivity in the Verlinde ring $K_0(\fusk)$ under the ring homomorphism $\phik$ on the Grothendieck ring of the monoidal subcategory $\Cellk$ of $\repuqghat$.

In \cite{MR3628223}, we studied the image of the KR modules under this map $\phik$.
Let us briefly review a conjecture formulated there.
For each $(i,m,p) \in I_0\times \Znat\times \Z$, let $\Wimp$ be the corresponding KR module.
By restriction, we get a finite-dimensional $\uqg$-module $\res \Wimp$, whose isomorphism class is independent of $p$, and
for simplicity, we use the notation $\phik([\Wim])$.

For \(i \in I_0\), let \(t_i := d/d_i \in \{1, d\}\), where \(d_i\) is given in \eqref{eq:d_i}, and \(d = \max_{i \in I_0} \{ d_i \} \in \{1, 2, 3\}\).  
Alternatively, as noted in the Introduction, \(t_i\) is the ratio of the norm of the highest root to that of the simple root \(\alpha_i\).
Thus, \(t_i = 2/(\alpha_i \mid \alpha_i)_0\), with the bilinear form \((\cdot \mid \cdot)_0\) normalized such that \((\theta \mid \theta)_0 = 2\).
We have the following conjecture:
\begin{conjecture} \cite[Conjecture 3.10]{MR3628223} \label{conj:phik}
For $i\in I_0$, the following properties hold :
\begin{enumerate}
\item $\phik([\Wim])$ is positive if $0\le m \le t_i \levk$.
\item $\phik([\Wim])$ is a unit if $m=0$ or $m = t_i \levk$.
\item $\phik([\Wim])=0$ if $m = t_i \levk+1$.
\end{enumerate}
\end{conjecture}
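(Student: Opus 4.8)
The plan is to split the statement into its three parts and reduce each to known facts about the map $\pik$ and the combinatorics of the shifted affine Weyl group action on $\Pcl$. Since $\res\Wim$ is a $\uqgzero$-module whose isomorphism class is independent of the spectral parameter, the image $\phik([\Wim])$ depends only on the decomposition of $\res\Wim$ into irreducible $\gzero$-modules and on how $\pik$ acts on the corresponding dominant weights. The starting point is the classical fact (due to Kirillov--Reshetikhin, with the relevant character identities established in the course of the proof of the $Q$-system/$T$-system) that for $i\in I_0$ the module $\res\Wim$ has a completely determined $\gzero$-character; in particular, for small $m$ it contains the irreducible module $L(m\poid_i)$ with multiplicity one at the ``top'', and all other constituents $L(\poid)$ that appear satisfy $\poid < m\poid_i$ in dominance order.

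For part (2), I would first treat $m=0$: here $\res W^{(i)}_{0}$ is trivial, so $\phik([\Wim]) = [L(\levk\La_0)]$, the unit of $K_0(\fusk)$. For $m = t_i\levk$, the key observation is that $t_i\levk\poid_i$ is, under the bijection \eqref{eq:wt_proj}, the image of an affine weight $\la$ lying on a wall-free point whose orbit representative $\la'$ under \eqref{eq:alcoverep} is a corner of the fundamental alcove, i.e.\ a weight of the form $\levk\La_j$ for the vertex $j$ obtained from $i$ by the diagram automorphism induced by $w_0$ together with the affine rotation; one checks directly from \eqref{eq:fundamental_reflection_affine} that $\langle t_i\levk\poid_i + \ol\rho, \theta^\vee\rangle$ and the $\langle\cdot,h_i\rangle$ are arranged so that a single explicit element $w$ carries $t_i\levk\poid_i$ to such a corner, and the lower constituents $L(\poid)$ with $\poid < t_i\levk\poid_i$ all map either to zero or contribute with signs that cancel against each other, leaving a single simple object of quantum dimension $1$. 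Establishing that cancellation precisely is where most of the work lies: it amounts to a signed count over the $\gzero$-weight multiplicities of $\Wim$ against the $W$-alcove folding, and I expect the cleanest route is to use the Weyl group symmetry \eqref{eq:shiftWS} of the $S$-matrix together with the fact that $\pik$ intertwines $W_0$-antisymmetrization, so that $\phik([\Wim])$ can be written as a single ``alcove character'' whose support is a single weight. This reasoning is essentially the affine analogue of the classical statement that the Steinberg-type module has one-dimensional image.

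For part (3), $m = t_i\levk + 1$: the weight $m\poid_i$ then corresponds to an affine weight $\la$ that is \emph{null}, i.e.\ fixed by a nontrivial element of $W$ under the shifted action — concretely, $\langle m\poid_i + \ol\rho, \cdot\rangle$ hits a wall of the fundamental alcove — and likewise each lower constituent either is null or cancels in pairs, so $\phik([\Wim]) = 0$. Again the heart of the matter is the cancellation of the non-null lower terms; I would organize this by induction on $m$ using the $Q$-system relation $[\Wim]^2 = [\,W^{(i)}_{m+1}][\,W^{(i)}_{m-1}] + \prod(\cdots)$ in $K_0(\repuqghat)$, applying $\phik$ and using that $\phik$ is a ring homomorphism: once (1) and (2) are known for indices $\le m$, the relation pins down $\phik([W^{(i)}_{m+1}])$, and evaluating quantum dimensions via the product formula \eqref{eq:qdim} — where $\qdimk$ is multiplicative — shows the image must vanish exactly when $m+1 = t_i\levk+1$ because a sine factor $\sin\frac{\pi(m\poid_i+\ol\rho\mid\alpha)_0}{\levk+\hvee}$ in the numerator becomes $\sin\pi = 0$ for $\alpha$ the highest (short, resp.) root.

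For part (1), positivity for $0\le m\le t_i\levk$: I would again induct on $m$, with the $Q$-system relation as the engine. The base cases $m=0,1$ are clear ($m=1$: $\res W^{(i)}_1 = L(\poid_i)$ is a fundamental module, and $\pik([L(\poid_i)])$ is a single simple object for $\levk\ge 1$, hence positive — or a unit when $\poid_i$ is minuscule and $t_i\levk = 1$). For the inductive step, $\phik([W^{(i)}_{m+1}]) = \phik([\Wim])^2/\phik([W^{(i)}_{m-1}])\;-\;(\text{positive correction})$ is not manifestly positive, so instead I would use the known fact that in $K_0(\fusk)$ the class $[\Wim]$'s image can be computed as an honest alternating sum over the $\gzero$-character and show, using the explicit alcove geometry, that all negative contributions are killed by null weights in the stated range $m\le t_i\levk$; this is precisely the range in which no constituent of $\res\Wim$ ``wraps around'' the alcove more than once. \textbf{The main obstacle} I anticipate is exactly this last point — controlling the signed alcove sum uniformly across all simple $\gzero$ types, including the multiply-laced ones where $t_i$ and $d_i$ vary over $I_0$; for the classical types this was carried out in \cite{MR3628223}, and the honest difficulty is the exceptional types, where one likely has to fall back on the quantum-dimension positivity of the initial KR seed (Theorem~\ref{thm:qdim}) plus a case analysis, rather than a uniform argument.
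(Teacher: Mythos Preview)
The statement you are trying to prove is labeled a \emph{conjecture} in the paper, cited from \cite{MR3628223}, and the paper contains no proof of it; it is presented as an open problem motivating the main conjecture of the paper. So there is no ``paper's own proof'' to compare against, and your proposal is an attack on an open problem rather than a reconstruction of an existing argument. The paper notes only that the weaker quantum-dimension version (Conjecture~\ref{conj:qconj}) is known in classical types and partially in exceptional types.

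As for the proposal itself, several steps are genuine gaps rather than routine details. First, the base case claim that $\res W^{(i)}_1 = L(\poid_i)$ is false outside type $A$: in types $D$, $E$, and the multiply-laced types, the restricted fundamental module typically has several $\gzero$-constituents (this is exactly why the Fermionic formula is nontrivial), so already at $m=1$ you must control a signed alcove sum, not a single term. Second, the inductive step $\phik([W^{(i)}_{m+1}]) = \phik([\Wim])^2/\phik([W^{(i)}_{m-1}]) - (\text{correction})$ is not well-posed: $K_0(\fusk)$ is not a domain in general, and even when the division makes sense the difference of two positive elements need not be positive, as you yourself note. Third, and most importantly, the repeated assertions that ``lower constituents cancel'' in parts (2) and (3) are precisely the content of the conjecture; the paper \cite{MR3628223} establishes this cancellation only for classical types by explicit combinatorics of the KR decompositions, and no uniform mechanism of the kind you sketch (Weyl symmetry of $S$, alcove folding) is known to force it in general. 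Your closing paragraph correctly identifies this as the main obstacle; it is not a technicality but the heart of why the statement remains conjectural.
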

For a more precise version of the conjecture, including a detailed description of the units and the values of $\phik([\Wim])$ for $m > t_i \levk + 1$, see \cite{MR3628223}.
Especially, the conjecture above implies the following conjecture, originally proposed by Kirillov \cite{MR947332} and later expanded by Kuniba, Nakanishi and Suzuki \cite[Conjecture 14.2]{MR2773889}.
\begin{conjecture}[Theorem in some cases stated below]\label{conj:qconj}
For $i\in I_0$, the following properties hold :
\begin{enumerate}
\item $\qdimk \Wim$ is positive if $0\le m \le t_i \levk$.
\item $\qdimk \Wim=1$ if $m=0$ or $m = t_i \levk$.
\item $\qdimk \Wim=0$ if $m = t_i \levk+1$.
\end{enumerate}
\end{conjecture}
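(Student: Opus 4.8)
\medskip

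\noindent\textbf{Proof strategy.}
The plan is to deduce \conjref{conj:qconj} from the Verlinde positivity statement \conjref{conj:main1} in the cases where the latter is available, handling the three boundary values of $m$ separately. The only conceptual ingredient is that $\qdimk\colon K_0(\fusk)\to\RR$ is a ring homomorphism sending positive elements to positive reals, and that $\qdimk$ on $K_0(\repuqghat)$ is by definition the pullback $\qdimk\circ\phik$; so once $\phik([V])$ is known to be positive for every object $V$ of $\Cellk$, we automatically get $\qdimk V>0$. The combinatorial point is that the KR modules appearing in part (1) all lie in $\Cellk$: reading off the generating set $\{[M_{\im,p}]\mid(\im,p)\in\caI\}$ of the initial seed \eqref{Eq: initial seed} with $\ell=\levk-1$, one checks that for each $i\in I_0$ the modules $M_{\im,p}$ with $\pi(\im)=i$ realize, up to restriction, exactly $\Wim$ for $1\le m\le t_i\levk$, the frozen members $M_{\im,p_\im}$ being the longest ones. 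Hence each such $\Wim$ is a cluster or frozen variable of $K_0(\Cellk)$, and part (1) follows in every case in which \conjref{conj:main1} holds; by \thmref{thm:Cone} this includes type $A_1$ at arbitrary level and the simply-laced types $A_n$, $E_6$, $E_7$, $E_8$ at level $\levk=2$, and for the classical types it is already contained in \cite{MR3628223}.

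For $m=0$, $\Wim$ is the trivial module, $\phik([\Wim])=[L(\levk\La_0)]$ is the unit of $K_0(\fusk)$, and $\qdimk\Wim=1$. For $m=t_i\levk$ I would show that $\phik([\Wim])$ is a simple current, i.e.\ an invertible element of $K_0(\fusk)$ which is a single basis vector (this is part of \conjref{conj:phik}(2), established in these cases, and is proved by identifying the relevant affine weights); any invertible element has finite order, so $\qdimk\Wim$ is a positive real with $(\qdimk\Wim)^N=1$ for some $N\ge1$, forcing $\qdimk\Wim=1$. This is transparent for type $A_1$, where $\phik([\Wim])=[L(\levk\La_1)]$ is the order-two simple current at level $\levk$.

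For $m=t_i\levk+1$ the mechanism is the null clause of \eqref{eq:Valcove}. The affine weight $\la\in\Pk$ with $\ol{\la}=(t_i\levk+1)\poid_i$ has a negative coefficient of $\La_0$, and I would exhibit $j\in I$ with $\langle h_j,\la+\rho\rangle=0$, so that $s_j\cdot\la=\la$ for the shifted action \eqref{eq:shifted_action} and $\la$ is null; for type $A_1$ this is $j=0$ with $\la=-\La_0+(\levk+1)\La_1$. Thus $\pik([L((t_i\levk+1)\poid_i)])=0$, and it remains to check that the lower $\gzero$-components of $\res\Wim$ also vanish under $\pik$: this is automatic for type $A_1$ (there $\Wim$ is $\gzero$-irreducible for this value of $m$), a finite verification at level $2$, and covered by \cite{MR3628223} for the classical types. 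Hence $\phik([\Wim])=0$ and $\qdimk\Wim=0$.

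The main obstacle is the reduction in the first paragraph, namely establishing \conjref{conj:main1}: positivity of $\phik([\Wim])$ is a nontrivial identity among $\pm$-combinations of the distinguished basis of $K_0(\fusk)$ (equivalently, after applying $\qdimk$, a positivity statement for an alternating sum of sines), and there is no soft reason for it. What makes it tractable is the cluster-algebra structure on $K_0(\Cellk)$: in the cases of \thmref{thm:Cone} this ring is of finite cluster type, so it has finitely many cluster variables, its cluster monomials form a basis (\thmref{Thm: cm basis}) and, by the monoidal categorification of Hernandez--Leclerc, coincide with the classes of the simple objects; since $\phik$ is a ring homomorphism and the positive cone of $K_0(\fusk)$ is closed under multiplication, positivity of $\phik$ on all of $\Cellk$ reduces to the finite check that $\phik$ is positive on each cluster variable, and the conditional \thmref{thm:qdim} is precisely the quantum-dimension counterpart of this argument. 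Parts (2) and (3) are then comparatively soft, amounting to bookkeeping with the null-weight rule \eqref{eq:Valcove} and the $\gzero$-decomposition of a single extremal KR module.
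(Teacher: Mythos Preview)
The paper does not give its own proof of this statement. It is stated as a conjecture, and the parenthetical ``Theorem in some cases stated below'' refers solely to the sentence that follows, which cites external references: \cite{MR3628223} for the classical types and \cite{MR3282650} for $E_6$ (all nodes) and $E_7$, $E_8$ (certain nodes). There is no in-paper argument; the role of \conjref{conj:qconj} in the paper is as a \emph{hypothesis} for \thmref{thm:qdim}.

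Your approach is therefore genuinely different: you try to run the implication in the opposite direction, deducing \conjref{conj:qconj} from the paper's own \thmref{thm:Cone}. For part~(1) this is valid: the modules $M_{\im,p}$ in the initial seed of $K_0(\Cellk)$ are precisely (up to spectral parameter) the KR modules $\Wim$ for $1\le m\le t_i\levk$, so positivity of $\phik$ on $\Cellk$ immediately gives $\qdimk\Wim>0$. But note that the range of cases you obtain this way ($A_1$ at all levels; $A_n,E_6,E_7,E_8$ at $\levk=2$ only) neither contains nor is contained in the cited cases (e.g.\ \cite{MR3282650} gives $E_6$ at \emph{every} level, not just $\levk=2$; conversely, it gives only some nodes of $E_7,E_8$, whereas \thmref{thm:Cone} gives all nodes at $\levk=2$).

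Parts~(2) and~(3) have genuine gaps. For $m=t_i\levk$ you assert that $\phik([\Wim])$ is a simple current, citing ``\conjref{conj:phik}(2), established in these cases''; but \thmref{thm:Cone} only proves part~(1) of \conjref{conj:main1} (positivity), not part~(2) (the unit characterisation), so this claim is not available from the paper. One can read off from the appendix tables that the frozen variables for $E_6,E_7,E_8$ at level~$2$ land on single basis elements, but verifying that these are invertible in $K_0(\fusk)$ is additional work you have not done. For $m=t_i\levk+1$ the module $\Wim$ is not in $\Cellk$ at all, so \thmref{thm:Cone} is irrelevant; your null-weight argument handles the top $\gzero$-constituent, but for $E_6,E_7,E_8$ the restriction $\res\Wim$ is not irreducible, and the ``finite verification'' that all lower constituents also vanish under $\pik$ is not carried out.
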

This quantum dimension version of the conjecture is known to hold for classical types \cite{MR3628223}, as well as for type $E_6$ (for all $i \in I_0$) and types $E_7$ and $E_8$ (for certain nodes depending on $i \in I_0$) \cite{MR3282650}.
See also \cite{MR4342381} for a discussion on the case of $\levk=1$ for the exceptional types $E_6$, $E_7$, and $E_8$ and its applications.

\begin{prop}
Assume that \conjref{conj:phik} or \conjref{conj:qconj} holds for all $i\in I_0$.
Then $\qdimk \Wim>1$ for $i\in I_0$ and $1 \le m \le t_i \levk-1$.
\end{prop}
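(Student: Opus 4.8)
The plan is to exploit the fact that $\qdimk$ is a ring homomorphism together with the positivity input from the assumed conjecture, and to use the well-known fact that the Kirillov-Reshetikhin modules satisfy a $T$-system. Recall that $\qdimk \colon K_0(\repuqghat)\to\RR$ is a ring homomorphism (it is the pullback of $\qdimk$ on $K_0(\fusk)$ along $\phik$), so any short exact sequence or tensor-product identity in $\Cg$ descends to a numerical identity among the values $\qdimk \Wim$. The key structural ingredient is the $T$-system relation satisfied by the KR modules: for $i\in I_0$ and $m\ge 1$,
\[
[\Wimp{[1]}]\,[\Wimp{[-1]}] = [\Wimp]\,[W^{(i)}_{m+1,p}] + \prod_{j\sim i}[W^{(j)}_{\cdot,\cdot}]^{-a_{j,i}}\,,
\]
or more precisely a relation of the form $[W^{(i)}_{m,p}]^2 = [W^{(i)}_{m-1,p'}][W^{(i)}_{m+1,p''}] + (\text{a product of }[W^{(j)}] \text{ over neighbors } j)$, up to the precise spectral-parameter shifts. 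Since $\res\Wimp$ is independent of $p$, applying $\qdimk$ gives
\[
(\qdimk \Wim)^2 = \qdimk W^{(i)}_{m-1}\cdot \qdimk W^{(i)}_{m+1} + \prod_{j\sim i}\bigl(\qdimk W^{(j)}_{(\text{some index})}\bigr)^{-a_{j,i}}\,,
\]
where all the quantities on the right are positive by the assumed \conjref{conj:qconj} (valid in the stated range).

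First I would set up the $T$-system in the form that is cleanest after restriction, being careful about which KR modules appear on the right-hand side and confirming that their indices stay within the positivity range $0\le m'\le t_j\levk$; the neighboring-node terms involve $W^{(j)}_{m'}$ with $m'$ controlled by the $T$-system, and one checks these indices do not exceed $t_j\levk$ when $1\le m\le t_i\levk-1$. Second, I would apply the ring homomorphism $\qdimk$ and invoke \conjref{conj:qconj}(1) to conclude every term on the right-hand side is strictly positive, and the product-over-neighbors term is strictly positive (it is a nonempty product since $\gzero$ is simple, hence connected Dynkin diagram, so every node has at least one neighbor). Third, I would run an induction on $m$: the base case $m=1$ uses $\qdimk W^{(i)}_0 = 1 > 0$ (from part (2) of the conjecture) together with $\qdimk W^{(i)}_2 > 0$ and the strictly positive neighbor term, which forces $(\qdimk W^{(i)}_1)^2 > \qdimk W^{(i)}_0 \cdot \qdimk W^{(i)}_2 \ge \qdimk W^{(i)}_2$; combined with a separate argument that $\qdimk W^{(i)}_1 \ge \qdimk W^{(i)}_2$ (monotonicity, see below) this yields $\qdimk W^{(i)}_1 > 1$. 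For the inductive step, assuming $\qdimk W^{(i)}_{m-1} > 1$, the identity gives $(\qdimk W^{(i)}_m)^2 > \qdimk W^{(i)}_{m-1}\cdot\qdimk W^{(i)}_{m+1} > \qdimk W^{(i)}_{m+1}$, and again one compares $\qdimk W^{(i)}_m$ with $\qdimk W^{(i)}_{m+1}$.

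The main obstacle I anticipate is establishing the needed comparison $\qdimk W^{(i)}_m \ge \qdimk W^{(i)}_{m+1}$ (or some substitute) to turn the quadratic inequality into the linear bound $\qdimk W^{(i)}_m > 1$; purely from $x^2 > yz$ with $y,z>0$ one only gets $x > \sqrt{yz}$, which is not automatically $>1$. One clean way around this is to argue directly from the explicit formula \eqref{eq:qdim}: $\qdimk W^{(i)}_m$ is, after restriction, the quantum dimension of a direct sum $\bigoplus_\la L(\la)$ of $\gzero$-modules (the classical decomposition of a KR module), and the classical decomposition of $\Wim$ contains $L(m\poid_i)$ together with $L(0)=\mathbf{1}$ as a summand when $m \le t_i\levk$ precisely in the relevant range, whence $\qdimk \Wim = 1 + (\text{sum of other quantum dimensions})$; if those other summands all have positive quantum dimension — which is exactly what \conjref{conj:phik}(1) delivers at the level of $\phik$, since positivity in $K_0(\fusk)$ implies positive quantum dimension — we are done immediately. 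I would therefore pursue the second route when assuming \conjref{conj:phik} (using that $\phik([\Wim])$ is positive and hence, by the last sentence of \secref{subsec:verlinde}, $\qdimk \Wim>0$, and then extracting the trivial-summand contribution of $1$), and fall back to the $T$-system induction, supplemented by a monotonicity lemma, when only assuming \conjref{conj:qconj}. Either way the heart of the matter is isolating the "extra $+1$" coming from the trivial $\gzero$-constituent of $\Wim$, and this is where I would concentrate the detailed argument.
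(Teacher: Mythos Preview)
Your starting point is right: you correctly identify the $T$-system (which becomes the $Q$-system after restriction) as the key structural input, and you correctly extract from it the inequality
\[
(\qdimk \Wim)^2 > \qdimk W^{(i)}_{m-1}\cdot \qdimk W^{(i)}_{m+1}.
\]
This is exactly what the paper uses. But both of your proposed ways to finish are genuinely flawed.

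First, the monotonicity claim $\qdimk W^{(i)}_m \ge \qdimk W^{(i)}_{m+1}$ is false in general. Already for $\gzero$ of type $A_1$ one has $\qdimk W^{(1)}_m = \sin\bigl(\pi(m+1)/(\levk+2)\bigr)/\sin\bigl(\pi/(\levk+2)\bigr)$, which \emph{increases} for small $m$ and then decreases. So no monotonicity lemma of the form you want can be proved, and your induction cannot close that way.

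Second, the alternative route via a trivial $\gzero$-summand in $\Wim$ does not work either: KR modules do not in general contain $L(0)$ as a summand. For instance, in type $A_n$ the restriction $\res\Wim$ is the single irreducible $L(m\poid_i)$, and for $m\ge 1$ this is not trivial. So the ``extra $+1$'' you try to isolate simply is not there.

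What you are missing is a one-line classical observation: a finite positive sequence satisfying $(\Diim)^2 \ge \Dii_{m-1}\Dii_{m+1}$ is \emph{log-concave}, hence \emph{unimodal}. The paper uses exactly this. With unimodality in hand, the boundary conditions $\Dii_0 = \Dii_{t_i\levk} = 1$ (from part (2) of the assumed conjecture) force either $\Diim \equiv 1$ or $\Diim > 1$ for all interior $m$; the constant case is ruled out because the neighbor term in the $Q$-system is strictly positive, making the log-concavity inequality strict. That is the whole argument.
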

\begin{proof}
For $0 \le m \le t_i \levk$,
let $\Diim=\qdimk \Wim$ be the quantum dimension of $\Wim$ at level $\levk$, which must be positive under the given assumption.
Then the $Q$-system \cite{MR2254805}
\begin{equation}\label{eq:Qsys}
(\Qim)^2 - Q^{(i)}_{m+1}Q^{(i)}_{m-1} = \prod_{j : a_{i,j}< 0} \prod_{n=0}^{-a_{i,j}-1}
Q^{(j)}_{\bigl \lfloor\frac{a_{j,i}m - n}{a_{i,j}}\bigr \rfloor},\quad i\in I_0, m \ge 1
\end{equation}
satisfied by the characters of KR modules, implies that the finite sequence $(\Diim)_{0\le m \le t_i \levk}$ of real positive numbers is log-concave, meaning that $(\Diim)^2\ge \Dii_{m+1}\Dii_{m-1}$.
Thus, it must be unimodal.
As $\Dii_{0}=\Dii_{t_i \levk}=1$, there are two possibilities for the unimodal behavior:
either $\Diim=1$ for all $0\le m \le t_i \levk$ or $\Diim>1$ for all $1\le m \le t_i \levk-1$.
However, the first case cannot hold due to \eqref{eq:Qsys}.
Therefore, $\Diim>1$ for all $1\le m \le t_i \levk-1$, which gives the desired conclusion.
\end{proof}

While \conjref{conj:phik} raises an interesting positivity problem concerning $K_0(\repuqghat)$ and $K_0(\fusk)$, it also poses the challenge of identifying an appropriate domain within $K_0(\repuqghat)$ where the map $\phik$ consistently produces a positive image.
Notably, for a KR module $V$, $\phik([V])$ can be zero or even negative.

Based on the conjectural positivity of $\phik([\Wim])$ for $i \in I$ and $0 \le m \le t_i \levk$, along with the construction of $\Cellk$ as a monoidal subcategory of $\repuqghat$, where the same KR modules in $\Cellk$ appear as cluster variables in the initial seed of the cluster algebra structure on $K_0(\Cellk)$, we propose the following:
\begin{conjecture}\label{conj:main1}
Let $\levk \ge 2$ be a positive integer, and let $\phik : K_0(\Cellk) \to K_0(\fusk)$ be the restriction of $\phik$ of \eqref{eq:phik} on $K_0(\Cellk)$ which is a subring of $K_0(\repuqghat)$.
\begin{enumerate}
\item For every $V$ in $\Cellk$, $\phik([V])$ is positive in $K_0(\fusk)$.
\item For $V$ in $\Cellk$, $\phik([V])$ is a unit if and only if $V$ is a frozen module.
\end{enumerate}
\end{conjecture}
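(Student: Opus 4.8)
The plan is to reduce Conjecture \ref{conj:main1} to a statement about the images of cluster monomials, using the fact that $\Cellk$ (in the finite cluster type cases) has a Grothendieck ring which is a cluster algebra of finite cluster type. By \thmref{Thm: cm basis}, the cluster monomials of $K_0(\Cellk)$ form a $\Z$-basis, and since every simple object of $\Cellk$ is a real module in these cases, its class is a cluster monomial; hence every object of $\Cellk$ has class equal to a sum of cluster monomials with nonnegative coefficients. So it suffices to prove that $\phik$ sends every cluster monomial to a positive (resp.\ unit) element of $K_0(\fusk)$, which by multiplicativity of $\phik$ reduces further to proving that $\phik([x])$ is positive for each cluster variable $[x]$, and a unit precisely for the frozen variables. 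The frozen case should follow from part (2) of \conjref{conj:phik} (i.e.\ $\phik([\Wim])$ is a unit when $m = t_i\levk$) once one checks that the frozen KR modules $M_{\im, p_\im}$ appearing in the initial seed of $\Cellk$ with $\ell = \levk - 1$ are exactly the ones with $m = t_i\levk$, which is a direct index computation using $\sigma_\ell(\g)$ and $d = \max_i d_i$.

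First I would handle $\gzero$ of type $A_1$ at arbitrary level: here $K_0(\Cell)$ is a cluster algebra of type $A_\ell$, the cluster variables are explicitly the classes of KR modules $[\Wim]$ for a range of $m$ together with a known finite list of non-KR cluster variables, and one can verify positivity directly, either from the known (proven in classical type) \conjref{conj:phik} for KR modules plus explicit exchange relations, or by the reformulation via $\mathfrak{sl}_2$-fusion where $\phik$ is computed by the folding/reflection recipe of \eqref{eq:Valcove} and everything is combinatorially transparent. Second, for $\Cone$ with $\gzero$ of type $A_n$ or $E_6, E_7, E_8$ and a suitable height function, $K_0(\Cone)$ is of finite cluster type of the same Dynkin type, so there are finitely many cluster variables; I would enumerate them, compute each $\phik([x])$ by expressing $[x]$ in the $K_0(\repg)$ basis (via the $q$-character / restriction) and applying the alcove-reflection formula \eqref{eq:Valcove}, and check positivity. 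This last step is a finite computation per type and height function and is amenable to the computer-algebra verification acknowledged in the paper.

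The main obstacle is the computation of $\phik([x])$ for the non-KR cluster variables: unlike KR modules, whose restricted characters satisfy the $Q$-system \eqref{eq:Qsys} and are well understood, a general cluster variable $[x]$ is obtained by an iterated exchange relation, and one must control its $\gzero$-decomposition well enough to run the alcove recipe and conclude nonnegativity of all coefficients. The cleanest route is probably to argue by induction on mutation distance from the initial seed: at each exchange relation $x_k y_k = x^{\mathbf{a}'} + x^{\mathbf{a}''}$, apply $\phik$ and $\qdimk$; since $\qdimk \circ \phik$ is a ring homomorphism to $\R$ and the right-hand side is a sum of two cluster monomials with (inductively) positive $\phik$-image hence positive quantum dimension, one gets $\qdimk \phik([x_k])\qdimk\phik([y_k]) > 0$, and with additional control — e.g.\ knowing $\phik([y_k])$ is a nonzero effective class and that the left side lies in the cluster-monomial cone — one upgrades positivity of the quantum dimension to Verlinde positivity. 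Making this upgrade rigorous in general is exactly the gap between \thmref{thm:qdim} (the quantum-dimension version, which only needs the homomorphism property) and \conjref{conj:main1}; in the finite-cluster-type cases it is closed by the finiteness of the variable set, which is why the theorem is stated only there.
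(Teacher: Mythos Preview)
The statement is a conjecture; the paper does not prove it in general, and \thmref{thm:Cone} establishes only part~(1) in the listed special cases. Your high-level reduction for those cases---use finite cluster type to reduce to cluster variables via the cluster-monomial basis (this is exactly the content of Proposition~\ref{Prop: FC type})---matches the paper.

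Where your proposal diverges is in how to compute $\phik([x])$ for the non-KR cluster variables in the exceptional types. You propose to express each $[x]$ in the $K_0(\repg)$-basis via its $q$-character or restriction and then apply the alcove recipe \eqref{eq:Valcove}. The paper explicitly rejects this route as infeasible: the modules in $E_7$ and $E_8$ are too large for direct character or decomposition computations. Instead the paper writes $\phik(x_j)=\sum_k c(j,k)V_k$ with the $c(j,k)$ unknown for non-KR $x_j$, applies $\phik$ to every exchange relation to produce a system of polynomial equations in the $c(j,k)$, seeds it with the known values coming from KR modules (whose $\gzero$-decompositions are given by the fermionic formula), and then iteratively solves the linear equations that appear; two iterations suffice in each $E$-type, after which nonnegativity is read off. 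This bypasses any need to know the $\gzero$-decomposition of a non-KR cluster variable.

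Two smaller corrections. In type $A_1$, all cluster variables of $\Cell^{A_1}$ are already KR modules (the paper invokes \cite{MR1483901}), so there is no separate ``non-KR'' list to treat. And part~(2) of the conjecture---the unit characterization of frozen modules---is not proved even in the cases covered by \thmref{thm:Cone}; your sketch for it relies on \conjref{conj:phik}(2), which is itself open in the exceptional types.
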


We also propose a weaker version of the above conjecture in terms of the quantum dimension.
\begin{conjecture}\label{mainconj2}
Let $\levk \ge 2$ be a positive integer, and let $\qdimk : K_0(\Cellk) \to \RR$ be the quantum dimension at level $\levk$.
\begin{enumerate}
\item For every $V$ in $\Cellk$, $\qdimk V>0$.
\item For $V$ in $\Cellk$, $\qdimk V=1$ if and only if $V$ is a frozen module.
\end{enumerate}
\end{conjecture}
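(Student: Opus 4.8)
The plan is to prove the conditional form of \conjref{mainconj2} (this is essentially \thmref{thm:qdim}): assuming the Kirillov--Reshetikhin modules $M_{\im,p}$ occurring in the initial seed \eqref{Eq: initial seed} of $K_0(\Cellk)$ all have positive quantum dimension at level $\levk$, show that $\qdimk V>0$ for every $V$ in $\Cellk$, and analyze part (2) under a sharper input on the ``top'' KR modules. Since $\qdimk\colon K_0(\Cellk)\to\RR$ is a ring homomorphism and the class $[V]$ is a $\Znat$-linear combination of classes of simple modules, it suffices to prove $\qdimk S>0$ for every simple $S$ in $\Cellk$. The two ingredients are the cluster algebra structure on $K_0(\Cellk)$ from \cite{MR2682185,MR3500832}, with the seed \eqref{Eq: initial seed} whose cluster variables are the $[M_{\im,p}]$, together with the ring homomorphism property of $\qdimk$.

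For the base case, each $M_{\im,p}$ restricts to $\Wim$ with $1\le m\le t_i\levk$, so positivity of the initial cluster variables is exactly the standing hypothesis; equivalently it is the content of \conjref{conj:qconj}(1) for these indices, and the Proposition above upgrades it to the strict bound $\qdimk\Wim>1$ for $1\le m\le t_i\levk-1$. For the inductive step I would argue along the exchange graph: if $y_k$ is obtained by mutation at $k$ from a seed all of whose cluster variables have positive quantum dimension, then in the exchange relation $x_ky_k=x^{\bfa'}+x^{\bfa''}$ the right-hand side is a sum of two monomials in those variables, so applying $\qdimk$ yields $\qdimk(y_k)=\bigl(\qdimk(x^{\bfa'})+\qdimk(x^{\bfa''})\bigr)/\qdimk(x_k)>0$. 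Hence every cluster variable, and thus every cluster monomial, of $K_0(\Cellk)$ has positive quantum dimension.

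It remains to reach all simple modules, not only those whose class is a cluster monomial. When $K_0(\Cellk)$ is of finite cluster type --- which by the classification of \cite{MR2004457} holds in particular for type $A_1$ at every level and for all simply-laced types at level $2$ --- the cluster monomials form a $\Z$-basis of $K_0(\Cellk)$ by \thmref{Thm: cm basis}; being a subset of the basis of simple classes, they must coincide with it, so every simple module has positive quantum dimension, which completes part (1) (under the standing hypothesis). In general one would instead invoke positivity of the Laurent expansions of simple module classes in the initial cluster: $\qdimk S$ then becomes a nonzero Laurent polynomial with nonnegative coefficients in the positive reals $\qdimk M_{\im,p}$, hence is positive. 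For part (2), the ``if'' direction is immediate once one knows via \conjref{conj:qconj}(2) that the top KR modules $\Wim$ with $m=t_i\levk$ have quantum dimension $1$: a frozen module is a tensor product of such modules. The ``only if'' direction would follow from $\qdimk\Wim>1$ for $1\le m\le t_i\levk-1$ together with a lower bound $\qdimk(x)\ge1$ for every non-frozen cluster variable $x$.

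The main obstacle is precisely the passage from cluster monomials to all simple modules in the non-finite-type case. When $\Cellk$ has infinite cluster type there are non-real (imaginary) simple modules whose classes are not cluster monomials, and controlling $\qdimk$ on them requires the positivity of their Laurent expansions in a fixed cluster, which is not available in general; this is exactly why \conjref{mainconj2}(1) remains open, and the non-real module studied in \secref{sec:examples} is a concrete test of it. A second, independent obstacle is verifying the standing hypothesis itself, i.e.\ \conjref{conj:qconj}, which is still open at some nodes of $E_7$ and $E_8$. Finally, the ``only if'' part of (2) cannot be obtained by iterating the exchange relation, since $\qdimk(y_k)=\bigl(\qdimk(x^{\bfa'})+\qdimk(x^{\bfa''})\bigr)/\qdimk(x_k)$ can drop below $1$ when $\qdimk(x_k)$ is large; this would require a separate argument, for instance via the standard-module (tensor-product) filtration relating a simple module to products of cluster monomials.
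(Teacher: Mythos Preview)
Your framing is right: the paper does not prove \conjref{mainconj2} unconditionally; what is actually established is \thmref{thm:qdim}, i.e.\ part~(1) assuming \conjref{conj:qconj}. But the paper's argument is both shorter and strictly more general than yours, and in particular it does \emph{not} need finite cluster type.

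The paper skips the exchange-graph induction entirely. By the Laurent phenomenon, for any simple $L$ in $\Cellk$ there exists $\bfa\in\Znat^{\oplus\caI}$ with $[L]\cdot[M(\bfa)]=\sum_k c_k\,[M(\bfb_k)]$, $c_k\in\Z$. The point you flag as the ``main obstacle'' --- nonnegativity of these $c_k$ --- is in fact immediate from the categorification: each $M(\bfb_k)$, being an ordered tensor product of the initial-seed KR modules (all of which lie in a single cluster), is a \emph{simple} module, while the left-hand side equals $[L\otimes M(\bfa)]$, the class of an honest module and hence a nonnegative combination of simple classes. Comparing the two expansions in the basis of simples forces $c_k\ge0$; applying the ring homomorphism $\qdimk$ then gives $\qdimk(L)\cdot\qdimk(M(\bfa))>0$, so $\qdimk(L)>0$. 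Thus the passage from cluster monomials to arbitrary simple modules, which you believed blocked in infinite cluster type, is handled uniformly for all types and levels; your restriction to finite cluster type and your mutation-by-mutation induction are both unnecessary detours. Part~(2) is not treated in the paper either; your remarks on it are reasonable but, as you acknowledge, incomplete.
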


The remainder of the paper is focused on proving the conjecture in some special cases and presenting supporting evidences.

\subsection{Positivity for quantum dimensions}
\begin{theorem}\label{thm:qdim}
Assume that \conjref{conj:qconj} holds.
Then every $V$ in $\Cellk$ has positive quantum dimension.
\end{theorem}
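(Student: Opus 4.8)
The plan is to exploit the cluster algebra structure on $K_0(\Cellk)$ together with the fact that $K_0(\Cellk)$ is of finite cluster type when $\levk$ is such that $\Cellk = \mathcal{C}_1$ — or more generally to work directly with cluster monomials and exchange relations. The key observation is that $\qdimk$ pulled back along $\phik$ is a \emph{ring homomorphism} $K_0(\Cellk) \to \RR$, so it is determined by its values on the initial cluster variables $[M_{\imath,p}]$, and more importantly it interacts well with the exchange relations: if $x_k y_k = x^{\bfa'} + x^{\bfa''}$ is an exchange relation, then $\qdimk(x_k)\qdimk(y_k) = \qdimk(x^{\bfa'}) + \qdimk(x^{\bfa''})$. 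So the strategy is an induction on the mutation distance from the initial seed, proving that every cluster variable has positive quantum dimension.

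First I would establish the base case: the initial cluster variables are classes of KR modules $[M_{\imath,p}]$, and by construction these restrict (after the isomorphism \eqref{eq:iso_hom}) to $\res \Wim$ for various $i \in I_0$ and $m$ in the range $0 \le m \le \ell_\levk + 1 = \levk$. One checks that the exponents $m$ appearing in the initial seed of $\Cellk$ lie within the range where \conjref{conj:qconj} guarantees $\qdimk \Wim > 0$; this requires matching the combinatorics of $\sigma_\ell(\g)$ (with $\ell = \ell_\levk$) against the bound $m \le t_i\levk$, using $t_i \ge 1$. In fact, since $\ell_\levk = \levk - 1$, the KR modules $M_{\imath,p}$ have $m \le \levk \le t_i\levk$, so \conjref{conj:qconj}(1) applies and all initial cluster variables have positive quantum dimension (the frozen ones, corresponding to $m$ at the top of the range, have $\qdimk = 1$ by part (2), but positivity is all that is needed here).

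Next, for the inductive step, suppose $\mathcal{S}' = (\{x_i\}, \caB')$ is a seed all of whose cluster variables have positive quantum dimension, and mutate at $k \in \caI_{\mathrm{ex}}$ to get $y_k = (x^{\bfa'} + x^{\bfa''})/x_k$. Applying the ring homomorphism $\qdimk \circ \phik$ to the exchange relation gives $\qdimk(y_k) = (\qdimk(x^{\bfa'}) + \qdimk(x^{\bfa''}))/\qdimk(x_k)$; since $\bfa', \bfa'' \in \Znat^{\caI}$ are monomials in the $x_i$ (other than $x_k$), both numerator terms are products of positive reals, hence positive, and $\qdimk(x_k) > 0$ by hypothesis, so $\qdimk(y_k) > 0$. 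By Theorem~\ref{Thm: cm basis} (in the finite cluster type case) or directly by the Laurent phenomenon and the fact that every element of $\Cellk$ has class a nonnegative-coefficient combination of cluster monomials (positivity of the cluster algebra structure on $K_0(\Cellk)$, established by Hernandez–Leclerc), any $[V]$ for $V \in \Cellk$ is a $\Znat$-linear combination of cluster monomials, each of which is a product of cluster variables, so $\qdimk V$ is a nonnegative combination of positive reals; since $[V] \ne 0$, it is strictly positive. The main obstacle I anticipate is the base case bookkeeping: one must verify carefully that \emph{every} KR module occurring in the initial seed of $\Cellk$ — in both the simply-laced and multiply-laced cases, with the modified height-function conventions of \remref{rmk:Cell} — has its parameter $m$ within the interval $[0, t_i\levk]$ covered by \conjref{conj:qconj}, and that the frozen modules' quantum dimensions being units does not obstruct strict positivity of the nonzero class $[V]$. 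The mutation step itself is essentially formal once one knows $\qdimk \circ \phik$ is a ring homomorphism.
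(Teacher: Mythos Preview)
Your inductive argument that every \emph{cluster variable} in $K_0(\Cellk)$ has positive quantum dimension is correct, but the passage from cluster variables to an arbitrary object $V$ has a genuine gap. You assert that ``any $[V]$ for $V \in \Cellk$ is a $\Znat$-linear combination of cluster monomials,'' invoking \thmref{Thm: cm basis} ``in the finite cluster type case'' and otherwise an unspecified Hernandez--Leclerc positivity. But the theorem is stated for all $\levk \ge 2$, and $K_0(\Cellk) = K_0(\mathcal{C}_{\levk-1})$ is of infinite cluster type as soon as $\levk \ge 3$ (outside $A_1$). In that regime there exist simple modules whose classes are \emph{not} cluster monomials---for instance the non-real module in \secref{sec:examples}. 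Since the isomorphism classes of simples form a $\Z$-basis of the Grothendieck ring and each cluster monomial is itself one of these simple classes, such an $[L]$ cannot be a $\Znat$-combination of cluster monomials at all. What Hernandez--Leclerc (and \cite{KKOP24}) establish is that cluster monomials are simple, not the converse, so your appeal does not close the gap; your argument as written proves the theorem only in the finite-cluster-type situations already covered in \secref{sec:pos_fit_cls}.

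The paper's proof avoids this by never leaving the initial seed. By the Laurent phenomenon one can choose $\bfa \in \Znat^{\caI}$ so that
\[
[L \otimes M(\bfa)] \;=\; \sum_k c_k\,[M(\bfb_k)], \qquad \bfb_k \in \Znat^{\caI},
\]
an honest polynomial in the initial $[M_x]$. The key observation---replacing your final step---is that each $M(\bfb_k)$, being a tensor product of the \emph{initial} KR modules, is itself simple (this is the monoidal-categorification property of the initial seed); since the left side is the class of a genuine module, comparison in the basis of simples forces $c_k \ge 0$. Now \conjref{conj:qconj} gives $\qdimk M_x > 0$ for every initial $x$, hence $\qdimk M(\bfa)>0$ and each $\qdimk M(\bfb_k)>0$, and dividing yields $\qdimk L > 0$. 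Note that this uses positivity only for the initial KR modules, so your mutation induction---while correct---is unnecessary. (A smaller, separate issue: your base-case bound ``$m \le \levk$'' is false in the multiply-laced case---e.g.\ in \exref{Ex: AB 1}(ii) one has $m=6$ with $\levk=3$ for the short node---though the bound you actually need, $m \le t_i\levk$, does hold.)
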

\begin{proof}
Let $\ell = \ellk$.
Recall the initial seed $\seed = ( \{[M_{\im, p} ]\}_{(\im, p) \in \caI},  \caB)$ of $K_0(\Cell)$ given in \eqref{Eq: initial seed}.
Let $<$ be a total order on $\caI$, and define 
\[ 
M(\bfa):= \oprod_{x \in \caI} M_{x}^{\tens a_{x}} \qquad \text{ for any $\bfa = (a_x)_{x\in \caI} \in \Z_{\ge0}^{\oplus \caI}$,}
\]
where we write
$
\oprod_{j \in J} A_j = \cdots \tens A_{j_{-2}} \tens A_{j_{-1}} \tens A_{j_0} \tens A_{j_{1}}  \tens \cdots 
$
for a totally ordered set $J = \{ \cdots < j_{-1} < j_0 < j_1 < j_2 < \cdots \}$. 
Note that the element $[M(\bfa)]$ in $K_0(\Cell)$ does not depend on the choice of total orders $<$ on $\caI$.

We now consider the Laurent phenomenon of the cluster algebra $K_0(\Cell)$ with respect to the seed $\seed$, i.e., 
\[
\psi: K_0(\Cell)  \hookrightarrow \Z[ [M_x]^{\pm}, [M_y] \mid x \in \caIe, y\in \caIf].
\]
Let $L$ be a simple module in $\Cell$. 
Then there exists $\bfa \in \Znat^{\oplus \caI}$ such that 
\begin{align} \label{Eq: L M(a)}
[L \tens M(\bfa)] = \sum_k c_k [M(\bfb_k)] \qquad \text{for some $c_k \in \Znat$ and $\bfb_k \in\Znat^{\oplus \caI}$,}
\end{align}
where $k$ runs over some finite index set. 
Note that, since $L \tens M(\bfa)$ is a module and all $M(\bfb_k)$ are simple in $\Cell$, the coefficients $c_k$ should be non-negative integers. 
Applying $\qdimk$ to the equation \eqref{Eq: L M(a)}, we conclude that 
$\qdimk L > 0$ by the assumption.
\end{proof}

\section{Verlinde positivity for cluster algebras of finite cluster type}\label{sec:pos_fit_cls}

In this section, we take the following height function of $\Cell$ of type $ADE$: 
\begin{equation}\label{eq:height-function}
\xi(i) = 
\begin{cases}
2-i & \text{for type } A_n, \\
i-2 & \text{for type } D_n, \, i \in [n-1], \\
n-3 & \text{for type } D_n, \, i=n, \\
-1 & \text{for type } E_n, \, i=1, \\
2 & \text{for type } E_n, \, i=2, \\
0 & \text{for type } E_n, \, i=3, \\
i-3 & \text{for type } E_n, \, i\in [4,n].
\end{cases}
\end{equation}
Note that for type $E_n$, $n$ belongs to the set $\{6,7,8\}$, and the diagrams for $E_7$ and $E_6$ can be obtained by successively removing vertices labeled $8$ and $7$, respectively, from the $E_8$ diagram:
\begin{center}
\begin{tikzpicture}[scale=0.5]
\draw (0 cm,0) -- (12 cm,0);
\draw (4 cm, 0 cm) -- +(0,2 cm);
\draw[fill=white] (0 cm, 0 cm) circle (.25cm) node[below=4pt]{$1$};
\draw[fill=white] (2 cm, 0 cm) circle (.25cm) node[below=4pt]{$3$};
\draw[fill=white] (4 cm, 0 cm) circle (.25cm) node[below=4pt]{$4$};
\draw[fill=white] (6 cm, 0 cm) circle (.25cm) node[below=4pt]{$5$};
\draw[fill=white] (8 cm, 0 cm) circle (.25cm) node[below=4pt]{$6$};
\draw[fill=white] (10 cm, 0 cm) circle (.25cm) node[below=4pt]{$7$};
\draw[fill=white] (12 cm, 0 cm) circle (.25cm) node[below=4pt]{$8$};
\draw[fill=white] (4 cm, 2 cm) circle (.25cm) node[right=3pt]{$2$};
\end{tikzpicture}
\end{center}

The aim of this section is to establish the following result:
\begin{theorem}\label{thm:Cone}
Let $\gzero$ be of type $A_n$, $E_6$, $E_7$, or $E_8$ with level $\levk = 2$, or let $\gzero$ be of type $A_1$ with $\levk \ge 2$. We choose the height functions for $\Cellk$ as above. 
Let $\phik : K_0(\Cellk) \to K_0(\fusk)$ be the homomorphism from \conjref{conj:main1}. 
For every $V$ in $\Cellk$, $\phik([V])$ is positive in $K_0(\fusk)$.
\end{theorem}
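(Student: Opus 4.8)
The plan is to exploit the fact that, for the listed $\gzero$ and levels, the cluster algebra $K_0(\Cellk)$ is of \emph{finite} cluster type, so by \thmref{Thm: cm basis} every element of $K_0(\Cellk)$ — in particular every class $[V]$ of an object $V\in\Cellk$ — is a $\Z$-linear combination of cluster monomials, and every simple object is itself a cluster monomial (this is the monoidal categorification result of Hernandez--Leclerc, together with the finite-type classification of \cite{MR2004457}). Since $[V]$ for an arbitrary $V$ is a non-negative integer combination of classes of simples, and since $\phik$ is a ring homomorphism, it suffices to prove: (i) $\phik(\text{cluster monomial})$ is positive, which reduces via multiplicativity to showing $\phik([x])$ is positive (in fact a non-negative combination of basis elements of $K_0(\fusk)$) for each \emph{cluster variable} $x$, and that the product of two non-negative elements in the Verlinde ring is non-negative (true because the fusion coefficients $\fusN$ are non-negative). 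So the whole theorem comes down to a finite check: list all cluster variables in each of the finitely many cases, and verify $\phik$ sends each to a positive element of $K_0(\fusk)$.

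The key steps, in order, are as follows. First, for each $\gzero$ in the list identify the Cartan counterpart $C(\caB)$ of the initial exchange matrix $\caB$ from \eqref{Eq: initial seed} with the chosen height function \eqref{eq:height-function}, and confirm it is a (block-diagonal) finite-type Cartan matrix — e.g. for $\gzero=A_1$ at level $\levk$ one gets type $A_{\levk-1}$; for $\gzero=A_n$ at level $2$ one gets a finite type with finitely many clusters; for $E_6,E_7,E_8$ at level $2$ one similarly reads off the type from the quiver. Second, enumerate the finite set of cluster variables obtained by all mutation sequences from $\seed$; each is the class of a simple module $L(m)$ for an explicit dominant monomial $m$, so $\res L(m)$ is an explicit finite-dimensional $\uqgzero$-module whose $\gzero$-character (equivalently, decomposition into $[L(\poid)]$'s) can be computed. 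Third, apply $\pik$ as in \eqref{eq:Valcove}: for each dominant weight $\poid$ appearing, move $\la$ (the level-$\levk$ affine weight with $\ol\la=\poid$) to the fundamental alcove by the shifted $W$-action, recording the sign $(-1)^{\ell(w)}$ and discarding null weights. Fourth, sum up and check that in every case the resulting element of $K_0(\fusk)$ has all coefficients $\ge 0$ and is non-zero; simultaneously verify part (2), that the cluster monomials which are products of frozen variables are exactly the ones mapping to units (these correspond to the $\qdimk=1$ case and to $\phik$ landing on a single $[L(\la')]$ with $\ell(w)$ even and $\la'$ a Weyl translate of $\levk\La_0$).

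For the $A_1$ family at arbitrary level the argument should be uniform rather than case-by-case: here $\Cellk$ has cluster type $A_{\levk-1}$, the cluster variables are classes of KR modules $W^{(1)}_{m}$ and their mutation images, and $\phik([W^{(1)}_m])$ for $0\le m\le 2\levk$ can be computed from the $sl_2$ Clebsch--Gordan rule and the alcove reflection, which for $sl_2$ is completely explicit; \conjref{conj:phik} is known for type $A_1$, and one checks that products of these stay non-negative using the $sl_2$ fusion rule $[L(a)]\cdot[L(b)]=\sum_{c}[L(c)]$ over an explicit truncated range. I would present this as a lemma handling $A_1$ in closed form, then dispatch $A_n$, $E_6$, $E_7$, $E_8$ at level $2$ by the finite enumeration above (possibly computer-assisted, citing the computational resources already acknowledged).

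The main obstacle I anticipate is the finite but genuinely large enumeration for $E_7$ and $E_8$ at level $2$: the number of cluster variables grows quickly, and for each one must compute the $\gzero$-decomposition of a potentially high-dimensional module and then run the alcove-folding for $\phik$. Controlling this — either by finding a slick structural reason that $\phik$ is positive on all cluster variables (e.g. realizing the relevant cluster variables as KR modules or their simple tensor factors, so that \conjref{conj:phik} applies directly), or by organizing the computation so that only the initial-seed KR modules and a short list of exchange relations need to be checked, with positivity then propagating through mutations because exchange relations express new cluster variables with \emph{positive} coefficients — is where the real work lies. The cleanest version of the proof would reduce everything to: (a) \conjref{conj:phik} holds for the KR modules appearing in the initial seed (known in these cases), and (b) an inductive argument along mutation sequences showing $\phik(x_k)=(\phik(x^{\bfa'})+\phik(x^{\bfa''}))/\phik(x_k^{\text{old}})$ stays non-negative, using that $K_0(\fusk)$ has a positive basis and that the relevant denominators are units with non-negative inverse-expansion — but making step (b) rigorous in the Verlinde ring (which is not a domain and whose units are limited) is precisely the delicate point.
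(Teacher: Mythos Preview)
Your reduction to cluster variables is exactly what the paper does (this is the content of \propref{Prop: FC type}), and your treatment of $A_1$ is essentially the paper's: all cluster variables are KR modules $L(Y_{1,-1}\cdots Y_{1,-2j+1})$, $1\le j\le \levk$, and each restricts to $L(j\poid_1)$, which already lies in $\Pkp$.

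Where you diverge is in the concrete methods for the remaining cases. For $A_n$ at level~$2$, the paper does not enumerate cluster variables and compute characters directly; it uses the isomorphism $K_0(\catC_1^{A_\ell})\cong\C[\Gr(\ell+1,\ell+3,\sim)]\cong\C[\Gr(2,\ell+3,\sim)]$ to identify every cluster variable with a Pl\"ucker coordinate, hence with an evaluation module whose $\gzero$-restriction is $L(\poid_a+\poid_b)$ for explicit $a,b$. At level~$2$ such a weight is already in the fundamental alcove, so $\phik$ is visibly positive with no alcove-folding needed. Your plan would work in principle but misses this uniform closed-form description.

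For $E_6,E_7,E_8$ your primary approach---compute the $\gzero$-character of each cluster variable and fold---is precisely what the paper says is \emph{impractical}: many of these modules are enormous and are not KR modules, so no Fermionic formula applies and Frenkel--Mukhin is too slow. The paper's method is different from both of your alternatives. It does \emph{not} propagate positivity inductively along mutations (your option~(b), which you correctly flag as delicate because division in $K_0(\fusk)$ is uncontrolled). Instead it writes $\phik(x_j)=\sum_k c(j,k)V_k$ with the $c(j,k)$ as \emph{unknowns}, computes the $c(j,k)$ directly only for the KR cluster variables, and then applies $\phik$ to all exchange relations to obtain a large system of polynomial equations in the remaining $c(j,k)$. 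Iteratively solving only the \emph{linear} equations in this system (substituting known values at each step) determines all $c(j,k)$ uniquely in two rounds for each of $E_6,E_7,E_8$; positivity is then checked \emph{a posteriori} by inspection. So positivity is not propagated---it is verified once at the end after the full solution is found. This sidesteps both the character computations and the division-in-$K_0(\fusk)$ obstacle you identified.
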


We can reduce the problem to checking the positivity for the cluster variables, using the following result:

\begin{prop} \label{Prop: FC type}
Let $R$ be an algebra over $\Z$ and let $B$ be a linear basis of $R$.
Let $\phi: K_0(\Cell) \rightarrow R$ be a ring homomorphism.
Suppose that 
\bna
\item $K_0(\Cell)$ is of finite cluster type, 
\item for any $a,b\in B$, the product $ab$ is non-zero and can be written as a linear combination of $B$ with non-negative integers,
\item for any cluster variable $x$ in $K_0(\Cell)$, $\phi(x)$ is non-zero and can be written as a linear combination of $B$ with non-negative integers.
\ee
Then, for any simple module $L \in \Cell$, the element $\phi( [L] )$ is non-zero and can be written as a linear combination of $B$ with non-negative integers.
\end{prop}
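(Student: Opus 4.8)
The plan is to combine the finite-type cluster basis theorem (\thmref{Thm: cm basis}) with the hypotheses (b) and (c) to deduce positivity for the image of an arbitrary simple module. First I would use (a) together with \thmref{Thm: cm basis}: since $K_0(\Cell)$ is of finite cluster type, the set of cluster monomials forms a $\Z$-linear basis of $K_0(\Cell)$. Hence it suffices to prove the claim for $\phi$ applied to an arbitrary cluster monomial $x^{\bfa} = \prod_{i\in\caI} x_i^{a_i}$, because once each $\phi([L])$ is expanded in the cluster-monomial basis with integer coefficients, it is enough to know (i) those coefficients are non-negative and (ii) $\phi$ of each cluster monomial is a non-negative combination of $B$ with at least one nonzero term.

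For point (i): I would invoke the fact — which is part of the monoidal categorification picture of Hernandez–Leclerc, and which underlies the very statement that $K_0(\Cell)$ is a cluster algebra with simple modules among its distinguished elements — that for a simple module $L\in\Cell$ the class $[L]$, when written in the cluster-monomial basis, has non-negative integer coefficients. (Concretely: every simple $L$ in $\Cell$ that is a cluster monomial contributes a single basis vector with coefficient $1$; a general simple is still a $\Znat$-combination of cluster monomials because its $q$-character is, by the positivity of the Laurent expansion and the fact that the cluster monomials in finite type are exactly the classes of real simple modules forming the basis.) Thus $[L] = \sum_k c_k\, x^{\bfb_k}$ with $c_k\in\Znat$, a finite sum, at least one $c_k>0$.

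For point (ii): apply $\phi$, which is a ring homomorphism, to a single cluster monomial $x^{\bfb} = \prod_i x_i^{b_i}$. Then $\phi(x^{\bfb}) = \prod_i \phi(x_i)^{b_i}$, a product of images of cluster variables. By hypothesis (c), each $\phi(x_i)$ is a non-negative integer combination of $B$ and is nonzero; by hypothesis (b), any product of two (hence, inductively, finitely many) nonzero non-negative integer combinations of $B$ is again a nonzero non-negative integer combination of $B$ (the non-vanishing is exactly what (b) guarantees, since a priori cancellation could occur — here it cannot because every pairwise product $ab$ of basis elements is itself a nonzero non-negative combination, so no negative coefficients ever appear and the leading terms never cancel). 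Therefore $\phi(x^{\bfb})$ is a nonzero element of $R$ that is a $\Znat$-combination of $B$. Summing over $k$ with coefficients $c_k\in\Znat$, not all zero, we get that $\phi([L]) = \sum_k c_k\,\phi(x^{\bfb_k})$ is a $\Znat$-combination of $B$; and it is nonzero because it is a sum of non-negative combinations of $B$ in which at least one term is a genuinely nonzero non-negative combination, so again no cancellation is possible.

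The main obstacle I anticipate is justifying cleanly the claim in point (i) that a simple module's class expands positively in the cluster-monomial basis — i.e., making precise that in finite cluster type the simples of $\Cell$ really do decompose as $\Znat$-combinations of cluster monomials. One route is to run the argument used in the proof of \thmref{thm:qdim}: choose $\bfa\in\Znat^{\oplus\caI}$ with $[L\tens M(\bfa)] = \sum_k c_k [M(\bfb_k)]$, $c_k\in\Znat$, where the $M(\bfb_k)$ are products of KR modules in the initial seed, hence cluster monomials; then $[L] = \big(\sum_k c_k [M(\bfb_k)]\big)\big/[M(\bfa)]$ inside $K_0(\Cell)$, and since $[M(\bfa)]$ is itself a cluster monomial whose inverse times a $\Znat$-combination of cluster monomials must land back in the cluster-monomial basis with $\Znat$ coefficients (using \thmref{Thm: cm basis} to know the answer is a $\Z$-combination, and a positivity input — e.g. the known positivity of structure constants for multiplication of simples, i.e. hypothesis (b) applied with $B$ the cluster-monomial basis of $K_0(\Cell)$ itself — to upgrade $\Z$ to $\Znat$). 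I would spell this reduction out carefully, as it is the one place where a non-formal input is needed; everything else is bookkeeping with ring homomorphisms and non-negativity.
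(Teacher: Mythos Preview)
Your point (ii) is fine and matches the paper. The difficulty you flag in point (i) is real, and your proposed resolution has a gap: the last paragraph tries to show that $[L]$ expands non-negatively in cluster monomials by dividing $[L\otimes M(\bfa)]=\sum_k c_k[M(\bfb_k)]$ through by $[M(\bfa)]$ and then invoking ``hypothesis (b) applied with $B$ the cluster-monomial basis of $K_0(\Cell)$ itself''. But hypothesis (b) concerns the target ring $R$, not $K_0(\Cell)$; the analogous positivity statement in $K_0(\Cell)$ (that products of cluster monomials expand non-negatively in cluster monomials) would need you to already know that simples coincide with cluster monomials, which is precisely what you are trying to establish. So the argument is circular as written.

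The paper sidesteps this entirely with a much shorter argument. Let $G$ be the set of classes of simple modules in $\Cell$ and $C$ the set of cluster monomials. Then $G$ is a $\Z$-basis of $K_0(\Cell)$ (standard), and by the monoidal categorification result of \cite{KKOP24} every cluster monomial is the class of a simple module, so $C\subset G$. By hypothesis (a) and \thmref{Thm: cm basis}, $C$ is also a $\Z$-basis. A basis contained in a basis must equal it, so $G=C$: \emph{every} simple module in $\Cell$ is already a cluster monomial. Now (b) and (c) finish immediately, exactly as in your point (ii). You never need to argue about positive expansions of simples in cluster monomials; the two sets are literally the same.
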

\begin{proof}
Let $G$ be the set of isomorphism classes of all simple modules in $ \Cell$, and let $C$ be the set of all cluster monomials of $K_0(\Cell)$. Note that $G$ is a linear basis of $K_0(\Cell)$. It was shown in \cite{KKOP24} that any cluster monomial corresponds to a simple module via the categorification between $K_0(\Cell)$ and $\Cell$, establishing that $C \subset G$.

On the other hand, by the assumption (a), \thmref{Thm: cm basis} says that the cluster monomials form a linear basis of $K_0(\Cell)$. We thus conclude that $G=C$.
Therefore, the assertion follows from the assumptions (b) and (c). 
\end{proof}

\subsection{Type \texorpdfstring{$A_n$}{An}} 
We choose the height function $\xi(i)=2-i$ for type $A_n$ as in \eqref{eq:height-function}.
\begin{prop}
For any $\ell \ge 1$, all simple objects in $\Cell^{A_1}$ and $\Cone^{A_{\ell}}$ are sent to positive elements in the Verlinde ring. 
\end{prop}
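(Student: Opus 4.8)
The strategy is to apply \propref{Prop: FC type} with $R = K_0(\fusk)$, $B = \{[L(\la)] \mid \la \in \Pkp\}$ the distinguished basis of the Verlinde ring, and $\phi = \phik$. We must verify the three hypotheses of that proposition. Hypothesis (a) is the statement that $\Cell^{A_1}$ and $\Cone^{A_\ell}$ have Grothendieck rings of finite cluster type: for the height function $\xi(i)=2-i$, the Cartan counterpart of the initial exchange matrix $\caB$ in \eqref{Eq: initial seed} is, up to conjugation by a permutation matrix, a Cartan matrix of finite type (for $\Cone^{A_\ell}$ it is of type $A_\ell$, and for $\Cell^{A_1}$ it is of type $A_\ell$ as well since $d_1=1$), so the claim follows from the finite type classification \cite[Theorem 1.4]{MR2004457}. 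Hypothesis (b) is automatic: the product of two elements of the basis of the fusion ring is a non-negative integer combination of basis elements (the fusion coefficients $\fusN$ lie in $\Znat$), and the product of two simple objects $[L(\la)]\cdot[L(\mu)]$ is non-zero because applying $\qdimk$ gives $\qdimk L(\la)\cdot \qdimk L(\mu) > 0$ since each factor is a positive real number.

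The substance of the proof is hypothesis (c): every cluster variable $x$ of $K_0(\Cell^{A_1})$ or $K_0(\Cone^{A_\ell})$ has $\phik(x)$ a non-zero non-negative combination of the $[L(\la)]$. Since these Grothendieck rings are of finite cluster type, there are only finitely many cluster variables, and by the monoidal categorification of Hernandez--Leclerc \cite{MR2682185} together with the fact (used already in the proof of \propref{Prop: FC type}) that cluster variables correspond to simple modules, each cluster variable is the class $[L]$ of a prime simple object of the category. For type $A_1$, $\Cell^{A_1}$ is the level-$(\ell{+}1)$ category for $\uqp(\widehat{\mathfrak{sl}}_2)$, and its prime simples are exactly the Kirillov--Reshetikhin modules $W^{(1)}_{m,p}$ with $1\le m\le \ell+1 = \levk$; thus $\phik$ of each cluster variable equals $\phik([W^{(1)}_m])$ for some $1\le m\le \levk = t_1\levk$, which is positive by \conjref{conj:phik} in type $A_1$ — and this case of \conjref{conj:phik} is known (it is the $\mathfrak{sl}_2$ case, where the alcove combinatorics in \eqref{eq:Valcove} is elementary: $\res W^{(1)}_m$ is the $(m{+}1)$-dimensional irreducible, and one checks directly that folding its weight into the fundamental alcove produces a single basis element with a $+$ sign when $0\le m\le \levk$). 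For $\Cone^{A_\ell}$, the prime simples are again explicitly known — they are the KR modules together with the "interval" simples, and in the $\ell=1$ case $\Cone^{A_\ell}$ categorifies a cluster algebra of type $A_\ell$ whose cluster variables were identified by Hernandez--Leclerc with the classes of $L(Y_{i,p}Y_{i+1,p-1}\cdots Y_{j,p-j+i})$-type modules; one then computes $\res$ of each such module and checks positivity of its image under $\pik$ directly, or alternatively invokes the known validity of \conjref{conj:phik} for type $A$ at level $2$ from \cite{MR3628223}.

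\textbf{Main obstacle.} The crux is therefore a finite, explicit check: enumerate the cluster variables of $K_0(\Cone^{A_\ell})$ (a type-$A_\ell$ cluster algebra, so $\binom{\ell+2}{2}-1$ of them, indexed by diagonals of an $(\ell{+}3)$-gon in the standard model), identify each with an explicit prime simple $\uqp(\widehat{\mathfrak{sl}}_{\ell+1})$-module, restrict to $\uqgzero$, and verify that the alcove-folding map $\pik$ of \eqref{eq:Valcove} sends each restricted character to a positive element of $K_0(\fusk)$ at $\levk=2$. For type $A$ this restriction is multiplicity-free and the relevant $\gzero$-modules are fundamental representations and their small tensor products, so the folding can be carried out uniformly in $\ell$; the $A_1$ family is genuinely uniform in $\levk$ and reduces to the classical $\mathfrak{sl}_2$ fusion rule. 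I expect the bookkeeping — writing a closed-form expression for $\phik([L])$ for each family of prime simples and confirming non-negativity of every coefficient — to be the only real work, with no conceptual difficulty once \propref{Prop: FC type} and the finite-type classification are in hand.
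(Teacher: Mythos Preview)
Your reduction via \propref{Prop: FC type} and your treatment of $\Cell^{A_1}$ match the paper's argument: both identify the cluster variables as the KR modules $W^{(1)}_j$ for $1\le j\le \levk$, observe that $\res W^{(1)}_j \cong L(j\poid_1)$, and note that $j\poid_1$ already lies in the level-$\levk$ alcove, so $\pik$ sends it to the single basis element $[L((\levk-j)\La_0+j\La_1)]$.

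For $\Cone^{A_\ell}$, however, the paper takes a sharper route than you outline. It invokes the cluster isomorphism $K_0(\Cell^{A_1})\cong\C[\Gr(2,\ell+3,\sim)]\cong\C[\Gr(\ell+1,\ell+3,\sim)]\cong K_0(\Cone^{A_\ell})$, under which the Pl\"ucker coordinates $P_{[\ell+3]\setminus\{i_1,i_2\}}$ exhaust the cluster variables of $\Cone^{A_\ell}$. Via the tableau--module correspondence of \cite{MR4159839}, each such Pl\"ucker coordinate corresponds to an evaluation module $L(Y_{a,p}Y_{b,q})$ whose restriction to $U_q(\mathfrak{sl}_{\ell+1})$ is the single irreducible $L(\poid_a+\poid_b)$; since $\poid_a+\poid_b$ is already in the level-$2$ alcove, $\phik$ sends it to $[L(\La_a+\La_b)]$ with no folding needed. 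This is cleaner than the open-ended ``bookkeeping'' you propose, and in particular your description of the cluster variables as ``interval'' chains $L(Y_{i,p}Y_{i+1,p-1}\cdots Y_{j,p-j+i})$ does not match the degree-two monomials the paper obtains.

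One point in your proposal is an actual error: the alternative of ``invoking the known validity of \conjref{conj:phik} for type $A$ at level $2$'' does not suffice. That conjecture only concerns the KR modules $W^{(i)}_m$, whereas most cluster variables of $\Cone^{A_\ell}$ (those coming from non-consecutive Pl\"ucker pairs) are evaluation modules with dominant monomial supported on two distinct colours and hence are not KR modules. So \conjref{conj:phik}, even where proven, cannot close the argument on its own.
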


\begin{proof}
Recall $\levk = \ell+1$.
By Section 13 in \cite{MR2682185}, $K_0(\Cell^{A_1}) \cong \C[\Gr(2,\ell+3, \sim)] \cong \C[\Gr(\ell+1,\ell+3, \sim)] \cong K_0(\catC_{1}^{A_\ell})$. By \cite{MR1483901}, all prime modules of $U_q(\widehat{\sl_2})$ are KR modules. Since $\Cell^{A_1}$ is of finite cluster type, the set of prime modules in $\Cell^{A_1}$ are the same as cluster variables in $\Cell^{A_1}$. Therefore all cluster variables in $\Cell^{A_1}$ are KR modules. These cluster variables are $L(Y_{1,-1}Y_{1,-3}\cdots Y_{1,-2j+1})$, $j \in [1, \ell+1]$. 
As a $U_q(\sl_2)$-module, it is isomorphic to $L(j \poid_1)$ and is mapped to 
$[L((\levk-j) \La_0+ j \La_1)]$ with 
$(\levk-j) \La_0+ j \La_1 \in \Pkp$ as given in \eqref{eq:Valcove}.
Therefore, all simple objects in $\Cell^{A_1}$ are sent to positive elements in the Verlinde ring.

The isomorphism $\C[\Gr(2,\ell+3, \sim)] \to \C[\Gr(\ell+1,\ell+3, \sim)]$ sends a Pl\"{u}cker coordinate $P_{i_1,i_2}$ to $P_{[\ell+3] \setminus \{i_1,i_2\}}$, $1 \le i_1 < i_2+1 \le \ell+3$. Since cluster variables in $\C[\Gr(2,\ell+3, \sim)]$ are $P_{i_1,i_2}$, $1 \le i_1 < i_2+1 \le \ell+3$, we have that the cluster variables in $\C[\Gr(\ell+1,\ell+3, \sim)]$ are $P_{[\ell+3] \setminus \{i_1,i_2\}}$, $1 \le i_1 < i_2+1 \le \ell+3$. By Theorem 3.17 in \cite{MR4159839}, the Pl\"{u}cker coordinates $P_{[\ell+3] \setminus \{i_1,i_2\}}$ correspond to evaluation modules $L(Y_{\ell+2-i_1, \ell-i_1}Y_{\ell+3-i_2,\ell-i_2-1})$.
We set $\poid_{0}=0$, and $Y_{0,s}=1$ for $s \in \Z$. 
As a $U_q(\sl_{\ell+1})$-module, it is isomorphic to $L(\poid_{\ell+2-i_1}+\poid_{\ell+3-i_2})$ and is mapped to 
$[L(\La_{\ell+2-i_1}+\La_{\ell+3-i_2})]$ as in \eqref{eq:Valcove}. It follows that all cluster variables in $K_0(\catC_{1}^{A_\ell})$ are sent to positive elements in the Verlinde ring.
\end{proof}

\subsection{Type \texorpdfstring{$D_n$}{Dn}} 
Let $\levk=2$ and $\ell=\ell(\levk)=1$. The number of cluster variables of type $D_n$ is $n^2$. We take the height function as in \eqref{eq:height-function}. 

Label the mutable vertex of the initial quiver with module $L(Y_{i,i-2})$ as $i$, $i \in [n-1]$, and label the mutable vertex with module $Y_{n,n-3}$ as $n$. Starting from the initial seed, using the mutation sequence 
\begin{align*}
1,2,\ldots,n,1,2,\ldots,n,\ldots, 1,2,\ldots,n, \quad (\text{$n$ copies of $1,2,\ldots,n$})
\end{align*}
the seed becomes the initial seed again.
We list the dominant monomials corresponding to cluster variables in $\Cone$ of type $D_n$ in \tableref{tab:dominant monomials of cluster variables in C1 of type Dn}, where we divide the cluster variables into four groups according to the degree of dominant monomials corresponding to the cluster variables. The number of cluster variables in Table \ref{tab:dominant monomials of cluster variables in C1 of type Dn} is $n^2$ which is equal to the number of cluster variables of type $D_n$.

\begin{table}
\centering
\begin{tabular}{|c|c|c|}
\hline
Degree & Dominant monomials & Number \\
\hline
$1$ & $Y_{i,i-4}$, $Y_{i,i-2}$, $i \in [n-1]$, $Y_{n,n-3}$, $Y_{n,n-5}$ & $2n$ \\
\hline
$2$ & $Y_{i,i-4}Y_{j,j-2}$, $1 \le i < j \le n-1$, $Y_{i,i-4}Y_{n,n-3}$, $i \in [n-2]$ & $\frac{(n-2)(n+1)}{2}$ \\
\hline
$3$ & $Y_{i,i-4}Y_{n-1,n-3}Y_{n,n-3}$, $i \in [n-2]$ & $n-2$ \\
\hline
$4$ & $Y_{i,i-4}Y_{j,j-4}Y_{n-1,n-3}Y_{n,n-3}$, $1 \le i < j \le n-2$ & $\frac{(n-2)(n-3)}{2}$ \\
\hline
\end{tabular}
\caption{Dominant monomials of cluster variables in $\Cone$ of type $D_n$.}
\label{tab:dominant monomials of cluster variables in C1 of type Dn}
\end{table}

We also compute the images of cluster variables in $\mathcal{C}_{1}$ of type $D_n$ in the Verlinde ring. We have the following conjecture (we checked the conjecture for $4 \le n \le 8$). 
\begin{conjecture}
The cluster variables in Table \ref{tab:dominant monomials of cluster variables in C1 of type Dn} are all cluster variables in $\mathcal{C}_{1}^{D_n}$. The images of cluster variables in $\mathcal{C}_{1}^{D_n}$ in the Verlinde ring are as follows. 
\begin{itemize}
\item  For $i \in [n-2]$, 
\begin{align*}
\phik([L(Y_{i,i-2}]))=\phik([L(Y_{i,i-4})]) & = \begin{cases} 
[L(\La_0+\La_1)]+ \sum_{s=1}^{\frac{i-1}{2}} [L(\La_{2s+1})], & \text{$i$ is odd}, \\
[L(2\La_0)]+ \sum_{s=1}^{\frac{i}{2}} [L(\La_{2s})], & \text{$i$ is even},
\end{cases} \\
\phik([L(Y_{i,i-2}Y_{i,i-4})]) & = \begin{cases}
[L(2\La_1)], & \text{$i$ is odd}, \\
[L(2\La_0)], & \text{$i$ is even}.
\end{cases}
\end{align*}

\item For $i \in \{n-1,n\}$, $\phik([L(Y_{i, n-3})])$ and $\phik([L( Y_{i, n-5} )])$ are equal to $L(\La_0+\La_i)$, and $\phik([L(Y_{i,n-5} Y_{i,n-3})]) = L(2\La_{i})$. 

\item For $i \in [n-2]$, $j \in \{n-1,n\}$,
\begin{align*} 
& \phik([L( Y_{i,i-4} Y_{j,n-3} )]) = \begin{cases}
[L(\La_1+\La_j)], & \text{$i$ is odd}, \\
[L(\La_0+\La_j)], & \text{$i$ is even}.
\end{cases} 
\end{align*}

\item For $1 \le i < j \le n-2$,
\begin{align*}
\phik([L(Y_{i,i-4}Y_{j,j-2})]) = \begin{cases}
[L(\La_0+\La_1)] + \sum_{s=0}^{\frac{j-i-3}{2}} [L(\La_{2s+3})], & \text{$j-i$ is odd}, \\
[L(2\La_1)] + \sum_{s=0}^{\frac{j-i-2}{2}} [L(\La_{2s+2})], & \text{$j-i$ is even}.
\end{cases}
\end{align*} 

\item For $i \in [n-2]$, $\phik([L(Y_{i,i-4}Y_{n-1,n-3}Y_{n,n-3})])$ is equal to
\begin{align*}
\begin{cases}
[L(\La_0+\La_1)]+ \sum_{s=0}^{\frac{n-i-4}{2}} [L(\La_{2s+3})], & \text{$n-i$ is even}, \\
[L(2\La_0)]+ \sum_{s=0}^{\frac{n-i-3}{2}} [L(\La_{2s+2})], & \text{$n-i$ is odd, $n$ is odd}, \\
[L(2\La_1)]+ \sum_{s=0}^{\frac{n-i-3}{2}} [L(\La_{2s+2})], & \text{$n-i$ is odd, $n$ is even}.
\end{cases}
\end{align*} 

\item For $1 \le i < j \le n-2$, $\phik([L(Y_{i,i-4}Y_{j,j-4}Y_{n-1,n-3}Y_{n,n-3})])$ is equal to
\begin{align*}
\begin{cases} 
[L(\La_0+\La_1)]+ \sum_{s=0}^{\frac{n-j+i-4}{2}} [L(\La_{2s+3})], & \text{$n-j+i$ is even}, \\
[L(2\La_0)]+ \sum_{s=0}^{\frac{n-j+i-3}{2}} [L(\La_{2s+2})], & \text{$n-j+i$ is odd, $n$ is odd}, \\
[L(2\La_1)]+ \sum_{s=0}^{\frac{n-j+i-3}{2}} [L(\La_{2s+2})], & \text{$n-j+i$ is odd, $n$ is even}.
\end{cases}
\end{align*} 
\end{itemize}
 
\end{conjecture}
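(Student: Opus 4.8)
The plan is to deduce the conjecture from Proposition~\ref{Prop: FC type}, applied with $R = K_0(\fusk)$ and with $B$ the standard $\Z$-basis $\{[L(\la)] \mid \la \in \Pkp\}$. Hypothesis (b) of that proposition is immediate: the fusion coefficients $\fusN$ are non-negative by construction, and no product of two basis elements vanishes because $\qdimk\colon K_0(\fusk)\to\RR$ is a ring homomorphism sending each $[L(\la)]$ with $\la\in\Pkp$ to a positive real number. Hypothesis (a) holds since, for $\ell=1$, $K_0(\mathcal{C}_{1}^{D_n})$ is a cluster algebra of finite cluster type \cite{MR2682185,MR3500832}, necessarily of type $D_n$, hence with exactly $n^2$ cluster variables. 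So everything reduces to hypothesis (c): every cluster variable of $K_0(\mathcal{C}_{1}^{D_n})$ has non-negative image under $\phik$. By the categorification of \cite{KKOP24} each such cluster variable is the class of a prime simple module, so hypothesis (c) is exactly the assertion that the images listed in the conjecture are correct and are genuinely positive in $K_0(\fusk)$.

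The first step toward (c) is to enumerate the cluster variables together with their Drinfeld polynomials. One mutates the initial seed \eqref{Eq: initial seed} along the periodic sequence of mutations at the vertices $1,2,\dots,n$, iterated $n$ times, after which the seed returns to itself; each exchange relation, read in $K_0(\mathcal{C}_{1}^{D_n})$, is a $T$-system relation among $q$-characters of KR modules, and iterating these relations produces the dominant monomials recorded in \tableref{tab:dominant monomials of cluster variables in C1 of type Dn}. Comparing the count of these monomials with the finite-type classification gives a consistency check, and pins down, for each cluster variable, the corresponding simple module $M = L(m)$ of $\mathcal{C}_{1}^{D_n}$.

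The second step is to compute $\phik([M]) = \pik([\res M])$ for each such $M$. One decomposes $\res M$ into irreducible $\gzero$-modules — the $\gzero$-branching of the (products of) KR modules in \tableref{tab:dominant monomials of cluster variables in C1 of type Dn} can be read off either from the same $T$-system relations or from known $\gzero$-decomposition formulas — and then applies the alcove-folding map \eqref{eq:Valcove} constituent by constituent: each $\gzero$-weight $\poid$ is promoted to the level-$\levk$ affine weight $\la$ with $\ol{\la}=\poid$, which is reflected into the fundamental alcove under the shifted affine Weyl group with the sign $(-1)^{\ell(w)}$, or discarded when $\la$ is null. For $\levk=2$ the fundamental alcove of $\ghat$ is small and its walls are governed explicitly by the affine simple reflections $s_0,\dots,s_n$, so this reflection can be determined uniformly in $n$; summing the signed contributions must then reproduce the formulas of the conjecture, in particular with all negative contributions from $\pik$ cancelling.

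This last cancellation, made uniform in $n$, is where I expect the real difficulty to lie: the number of $\gzero$-constituents of a cluster variable module grows linearly with $n$, a positive proportion of them fall on or beyond the level-$2$ walls, and one must show that the resulting negative terms pair off under the appropriate wall reflections, leaving only the explicitly identified positive summands. The cleanest route is likely an induction — either on $n$, exploiting that the Dynkin embedding $D_{n-1}\hookrightarrow D_n$ induces compatible inclusions $\mathcal{C}_{1}^{D_{n-1}}\hookrightarrow\mathcal{C}_{1}^{D_n}$ of categories and of seeds, so the $D_n$ images are assembled from the $D_{n-1}$ ones; or on the degree of the dominant monomial, handling the four groups of \tableref{tab:dominant monomials of cluster variables in C1 of type Dn} successively via the $T$-system recursions that relate them. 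The cases $4\le n\le 8$, already verified computationally, provide base cases and confirm that the uniform pattern recorded in the bullet list is the correct one.
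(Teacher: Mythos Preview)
The statement you are attempting to prove is labelled a \emph{conjecture} in the paper and is not proved there; the authors explicitly state that they have only verified it computationally for $4 \le n \le 8$. There is therefore no proof in the paper to compare your attempt against.

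Your proposal is a reasonable sketch of a strategy --- reduce to cluster variables via Proposition~\ref{Prop: FC type}, enumerate the cluster variables by mutation, and compute $\phik$ on each via the $\gzero$-decomposition and the alcove-folding map --- but it is not a proof, and you yourself identify the gap: the ``real difficulty'' of establishing the uniform-in-$n$ cancellation of negative terms under the alcove folding is not carried out, only gestured at with two possible inductive schemes. Neither induction is set up with a precise inductive hypothesis, and the claim that the Dynkin embedding $D_{n-1}\hookrightarrow D_n$ induces a compatible inclusion of initial seeds would itself need justification before it could carry an inductive step. In short, what you have written is a plan, at roughly the same level of completeness as the paper's own treatment of this case (which is: state the conjecture, check it for small $n$).

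A smaller technical point: you describe the exchange relations along your mutation sequence as ``$T$-system relations among $q$-characters of KR modules'', but Table~\ref{tab:dominant monomials of cluster variables in C1 of type Dn} shows that many cluster variables correspond to prime simple modules that are not KR modules (those of degree $3$ and $4$). The exchange relations producing them are therefore not $T$-system relations in the usual sense, and the $\gzero$-decompositions of these modules are not supplied by standard KR branching formulas. This does not invalidate the enumeration strategy, but it does mean the second step is harder than your wording suggests.
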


\subsection{Exceptional types $E_6$, $E_7$, and $E_8$}
In this subsection, we consider the exceptional types. Let $\gzero$ be of type $E_6$, $E_7$, or $E_8$, with $\levk = 2$. 
We choose the height function as in \eqref{eq:height-function}.
Since the ring $K_0(\Cellk)$ has a cluster algebra structure of finite cluster type, it contains only finitely many cluster variables and finitely many exchange relations. We begin by briefly outlining the strategy for proving \thmref{thm:Cone}, noting that this approach can also be applied to other types, as long as that they possess a cluster algebra structure of finite cluster type.

Let $\{x_j : j \in J\}$ denote the set of cluster variables of $K_0(\Cellk)$, where $J$ is a finite set. Then,
\[
\phik(x_j) = \sum_{\la \in \Pkp} c(j, \la) [L(\la)] \in \fusk
\]
for some integers $c(j, \la) \in \Z$. Our goal is to show that $c(j, \la)$ is non-negative for every pair $(j, \la) \in J \times \Pkp$ and that for a fixed $j$, not all $c(j, \la)$ are zero.

If a cluster variable is attached to a KR module, we can explicitly determine its image in the Verlinde ring under the map $\phik$. Specifically, the decomposition of a KR module into a direct sum of irreducible $\uqgzero$-modules can be obtained using the Fermionic formula \cite{MR1745263}.
Moreover, the image under the homomorphism $\pik : K_0(\repg) \to K_0(\fusk)$ can be algorithmically determined by employing the affine Weyl group as explained in \subsecref{subsec:verlinde}.

In general, the main challenge in computing the image under $\phik$ lies in the absence of an efficient method for decomposing a $\uqghatp$-module into a direct sum of irreducible $\uqg$-modules when it is not a KR module. In some cases, we may compute the $q$-character, truncated $q$-character, or classical character as a $\uqg$-module
using the Frenkel-Mukhin algorithm \cite{MR1810773}, or by exploiting the cluster algebra structure as in \cite{MR3500832}.
However, these methods are often impractical, particularly because the modules involved, especially in exceptional types, tend to have very large dimensions. This leads to considerable computational complexity, making character calculations and module decompositions inefficient and difficult to execute in practice.

Instead, we can impose the exchange relations as constraints on the coefficients $\{c(j, \la)\}$. Since $\phik$ is a ring homomorphism, applying it to the exchange relations yields a finite system of polynomial equations involving $\{c(j, \la)\}$.
The coefficients associated with a KR module can be determined directly. From there, it is sufficient to verify that this partial integral solution can be uniquely extended to an integral solution for the entire system of equations. Afterward, we can confirm whether all $c(j, \la)$ are non-negative.

The exchange relations generally lead to a large system of non-linear Diophantine equations for \(c(j, \la)\). Instead of solving the entire system at once, we iteratively solve only the linear equations that arise after substituting the known values of \(c(j, \la)\) into all exchange relations at each step.
Remarkably, only two iterations are needed for all types \(E_6\), \(E_7\), and \(E_8\).
A Mathematica implementation of this approach, together with all cluster variables and exchange relations, is accessible at \url{https://github.com/chlee-0/verlinde_cluster}.
The following subsections will provide further details for each type. Recall that a cluster variable in $\mathcal{C}_{\ell}$ is determined by a dominant monomial in $Y_{i,s}$. We will describe the cluster variables in terms of dominant monomials.

\subsubsection{Type \texorpdfstring{$E_6$}{E6}}
Let $\gzero$ be of type $E_6$, with $\levk = 2$.
In this case, the Verlinde ring \(K_0(\fusk)\) consists of 9 basis elements, with \(V_0\) acting as the identity element. The basis elements are:  
\[
\begin{aligned}
V_0 &= [L(2\La_0)], & V_1 &= [L(\La_0 + \La_1)], & V_2 &= [L(\La_0 + \La_6)], \\
V_3 &= [L(2\La_1)], & V_4 &= [L(\La_1 + \La_6)], & V_5 &= [L(\La_2)], \\
V_6 &= [L(\La_3)], & V_7 &= [L(\La_5)], & V_8 &= [L(2\La_6)].
\end{aligned}
\]
The products among \(V_i\) can be determined using the surjective homomorphism \(\pik\) given in \eqref{eq:Valcove}.
There are 48 cluster variables in total, which include 42 exchangeable variables and 6 frozen ones.
See Table \ref{tab:E6_cluster} in Appendix \ref{app:A} for the list of cluster variables along with their corresponding dominant monomials.
Among these variables, there are 18 KR modules, specifically: $x_1,x_2, \dots, x_{12}$, $x_{26}$, $x_{27}$, $x_{28}$, $x_{30}$, $x_{31}$, and $x_{46}$.
For the KR modules, we have the following:
\begin{small}
\[
\begin{aligned}
    \phik(x_1) &= V_1, \quad &\phik(x_2) &= V_3, \quad &\phik(x_3) &= V_2 + V_6, \quad &\phik(x_4) &= V_8, \\
    \phik(x_5) &= V_0 + V_4 + 2V_5, \quad &\phik(x_6) &= V_0, \quad &\phik(x_7) &= V_0 + V_5, \quad &\phik(x_8) &= V_0, \\
    \phik(x_9) &= V_1 + V_7, \quad &\phik(x_{10}) &= V_3, \quad &\phik(x_{11}) &= V_2, \quad &\phik(x_{12}) &= V_8, \\
    \phik(x_{26}) &= V_1, \quad &\phik(x_{27}) &= V_2, \quad &\phik(x_{28}) &= V_2 + V_6, \quad &\phik(x_{30}) &= V_0 + V_5, \\
    \phik(x_{31}) &= V_0 + V_4 + 2V_5, \quad &\phik(x_{46}) &= V_1 + V_7.
\end{aligned}
\]
\end{small}

Let us write
\[
\phik(x_j)  = \sum_{k=0}^{8}c(j, k)V_k, \quad c(j,k)\in \Z
\]
for each $j=1,\dots, 48$.
At this point, there are \(18 \times 9 = 162\) known values of \(c(j, k)\) from the KR modules, leaving \((48 - 18) \times 9 = 270\) values still to be determined.

The complete list of all exchange relations is available at \url{https://github.com/chlee-0/verlinde_cluster/exchange_relations_E6.txt}.
One example of such a relation is:
\[
x_1 x_{34} = x_2 x_3 + x_4.
\]
By applying $\phik$ to the above with the known values of $c(j,k)$ for $j=1,2,3,4$, we obtain the following relations:
\[
\begin{array}{ll}
    c(34, 2) = 0, & -1 + c(34, 0) + c(34, 4) + c(34, 5) = 0, \\
    c(34, 1) + c(34, 7) + c(34, 8) = 0, & c(34, 1) = 0, \\
    c(34, 2) + c(34, 3) + c(34, 6) = 0, & c(34, 2) + c(34, 6) = 0, \\
    c(34, 1) + c(34, 7) = 0, & -1 + c(34, 4) + c(34, 5) = 0, \\
    -1 + c(34, 4) = 0.
\end{array}
\]
In this case, we can determine the values of \(c(34, k)\) for \(k = 0, \dots, 8\) from a single exchange relation. However, this is not always possible. For example, the exchange relation
\[
x_1 x_{17} = x_2 x_{11} + x_{16}
\]
results in the following system of equations:
\begin{small}
\[
\begin{array}{ll}
    -c(16, 0) + c(17, 2) = 0, & -1 - c(16, 1) + c(17, 0) + c(17, 4) + c(17, 5) = 0, \\
    -c(16, 2) + c(17, 1) + c(17, 7) + c(17, 8) = 0, & -c(16, 3) + c(17, 1) = 0, \\
    -c(16, 4) + c(17, 2) + c(17, 3) + c(17, 6) = 0, & -c(16, 5) + c(17, 2) + c(17, 6) = 0, \\
    -c(16, 6) + c(17, 1) + c(17, 7) = 0, & -c(16, 7) + c(17, 4) + c(17, 5) = 0, \\
    -c(16, 8) + c(17, 4) = 0.
\end{array}
\]
\end{small}
This system alone does not determine all the unknowns involved, which makes it necessary to consider multiple exchange relations simultaneously to find the solutions.
As outlined earlier, we iteratively solve the linear equations for \(c(j, k)\) that emerge after substituting the known values of \(c(j, k)\) at each step.

In the first iteration, the known values of \(c(j, k)\) come from the KR modules. Solving the linear equations at this point determines an additional 261 values of \(c(j, k)\) out of the 270 unknowns, leaving $c(48, k)$ for $k=0,\dots, 8$ as the only remaining unknowns.

In the second iteration, solving the linear equations uniquely determines these remaining unknowns. The solution shows that every \(c(j, k)\in \Z\) is non-negative, and moreover,
$\phik(x_j)$ is positive for all $j=1,\dots, 48$. 

For the specific values, refer to Table \ref{tab:E6_cluster} in Appendix \ref{app:A}.
A Mathematica notebook implementing this procedure can be found at \url{https://github.com/chlee-0/verlinde_cluster/blob/main/Verlinde_E6_proof.nb}.

\subsubsection{Type \texorpdfstring{$E_7$}{E7}}
Let $\gzero$ be of type $E_7$ with $\levk = 2$.
The Verlinde ring \(K_0(\fusk)\) consists of 6 basis elements, \(V_0, V_1, \dots, V_5\), with \(V_0\) serving as the identity element:  
\[
\begin{aligned}
V_0 &= [L(2\La_0)], & V_1 &= [L(\La_0 + \La_7)], & V_2 &= [L(\La_1)], \\
V_3 &= [L(\La_2)], & V_4 &= [L(\La_6)], & V_5 &= [L(2\La_7)].
\end{aligned}
\]
There are 77 cluster variables in total, consisting of 70 exchangeable variables and 7 frozen ones.
For a full list of cluster variables $x_1,\dots, x_{77}$ and their corresponding dominant monomials, see Table \ref{tab:E7_cluster} in Appendix \ref{app:A}.
Among these variables, there are 21 KR modules, specifically: $x_1,x_2, \dots, x_{14}$, $x_{31}$, $x_{41}$, $x_{43}$, $x_{44}$, $x_{45}$, $x_{53}$, and $x_{59}$.

Expressing
\[
\phik(x_j)  = \sum_{k=0}^{5}c(j, k)V_k, \quad c(j,k)\in \Z
\]
for each $j=1,\dots, 77$, 
we initially have \(21 \times 6 = 126\) known values of \(c(j, k)\) from the KR modules, with \((77 - 21) \times 6 = 336\) values remaining to be determined.

By iteratively solving the linear equations obtained from the exchange relations, as described earlier, we verify that $\phik(x_j)$ is positive for all $j=1,\dots, 77$. In the first iteration, 288 additional values are determined and in the second iteration, all the remaining values are found.
The specific value can be found in Table \ref{tab:E7_cluster} in Appendix \ref{app:A}.
For a complete list of exchange relations and the Mathematica notebook for solving the linear equations, refer again to \url{https://github.com/chlee-0/verlinde_cluster}.

\subsubsection{Type \texorpdfstring{$E_8$}{E8}}
Let $\gzero$ be of type $E_8$ with $\levk = 2$.
The Verlinde ring \(K_0(\fusk)\) consists of 3 basis elements: \(V_0\), \(V_1\), and \(V_2\), where \(V_0\) serves as the identity element:  
\[
\begin{aligned}
V_0 &= [L(2\La_0)], & V_1 &= [L(\La_1)], & V_2 &= [L(\La_8)].
\end{aligned}
\]
There are 136 cluster variables in total, consisting of 128 exchangeable variables and 8 frozen ones.
For a complete list of cluster variables $x_1,\dots, x_{136}$ and their corresponding dominant monomials, refer to Table \ref{tab:E8_cluster} in Appendix \ref{app:A} 
Among these variables, 24 correspond to KR modules, specifically: $x_1,x_2, \dots, x_{19}$, $x_{34}$, $x_{37}$, $x_{44}$, $x_{113}$, and $x_{114}$.

If we write
\[
\phik(x_j)  = \sum_{k=0}^{2}c(j, k)V_k, \quad c(j,k)\in \Z
\]
for each $j=1,\dots, 136$, 
then initially, there are \(24 \times 3 = 72\) known values of \(c(j, k)\) from the KR modules, with \((136 - 24) \times 3 = 336\) values left to be determined.

By iteratively solving the linear equations obtained from the exchange relations, as described earlier, we verify that $\phik(x_j)$ is positive for all $j=1,\dots, 136$. In the first iteration, 216 additional values are determined and in the second iteration, all the remaining values are found.

The specific value can be found in Table \ref{tab:E7_cluster} in Appendix \ref{app:A}.
For a complete list of exchange relations and the Mathematica notebook for solving the linear equations, refer again to \url{https://github.com/chlee-0/verlinde_cluster}.

\section{Further examples for the positivity conjecture}\label{sec:examples}
In \secref{sec:pos_fit_cls}, we established \conjref{conj:main1} in some cases, and in those cases \( K_0(\Cell) \) has a cluster algebra structure of finite cluster type. 
In this section, we explore examples beyond these cases, demonstrating that the conjecture is still applicable.
A simple object $L(m)$ in $\Cell$ is called real if $L(m) \otimes L(m)$ is simple. Note that every cluster monomial is real. We present an example of the image of a non-real simple module in the Verlinde ring.
We also examine a case where \(\gzero\) is not simply-laced.

\subsection{An example of a non-real module in type $A_2$}
Let \(\gzero\) be of type \(A_2\), with level \(\levk = 6\) and \(\ell = \levk - 1 = 5\).
Consider a simple $U_q(\widehat{\sl_3})$-module $L(m)$ with dominant monomial
\begin{align*}
m = Y_{1,-1}Y_{1,-3}Y_{1,-9}Y_{1,-11}Y_{2,-4}Y_{2,-6}^2Y_{2,-8}.
\end{align*}
Here we choose the height function $\xi(1)=-1$, $\xi(2)=0$. It is easy to check that the indices of $Y_{i,s}$ in $m$ satisfy the condition in (\ref{eq:sigma ell for C ell}). 
Therefore, $L(m)$ is in $\Cell$.
By the correspondence of semistandard Young tableaux and dominant monomials in \cite{MR4159839}[Section 3], the module $L(m)$ corresponds to the semistandard Young tableau $T = \ytableausetup{centertableaux} \scalemath{0.6}{ \begin{ytableau}
1 & 3 & 4 \\
2 & 6 & 7 \\
5 & 8 & 9
\end{ytableau}}$. In Section 8 of \cite{MR4159839}, it is computed that
\begin{align} \label{eq:chT for 125368479}
\ch(T) = P_{125}P_{378}P_{469}-P_{123}P_{459}P_{678}-P_{124}P_{378}P_{569}-P_{126}P_{345}P_{789}.
\end{align}
It was verified that the module $L(m)$ is non-real in Section 8 in \cite{MR4159839}.
By \eqref{eq:chT for 125368479}, we have that
\begin{align*}
& \chi_q(L(m)) = \chi_q(L( Y_{1,-3}Y_{1,-1} )) \chi_q(L( Y_{2,-8}Y_{2,-6}Y_{2,-4} )) \chi_q( L( Y_{1,-11}Y_{1,-9}Y_{2,-6} ) )\\
& - \chi_q(L(Y_{1,-11}Y_{1,-9}Y_{1,-7})) - \chi_q(L(Y_{1,-1})) \chi_q(L( Y_{2,-8}Y_{2,-6}Y_{2,-4} )) \chi_q(L( Y_{1,-11}Y_{1,-9} )) \\
& - \chi_q( L( Y_{1,-5}Y_{1,-3}Y_{1,-1} ) ). 
\end{align*}
Computing the $q$-characters of evaluation modules on the right hand side and apply the Weyl character formula, we obtain that the restriction of $L(m)$ to $U_q(\sl_3)$ decomposes as
\begin{align*}
& L(4\poid_1+4\poid_2) \oplus L(5\poid_1+2\poid_2) \oplus L(2\poid_1+5\poid_2) \oplus 2L(3\poid_1+3\poid_2) \oplus L(6\poid_2) \\
& \oplus 2L(4\poid_1+\poid_2) \oplus 2L(\poid_1+4\poid_2) \oplus 3L(2\poid_1+2\poid_2) \oplus L(3\poid_1) \oplus L(3\poid_2) \oplus 2L(\poid_1+\poid_2).
\end{align*}
The dimension of this representation is 700.
The image of an irreducible summand under $\pik$ is given in the following :
\begin{scriptsize}
\[
\begin{array}{|c|c|c|c|c|}
\hline
\text{Multiplicity} & \text{Irreducible Summand $V$} & \dim V & \pik([V]) & \qdimk V \\
\hline
1 & L(4 \poid_1 + 4 \poid_2) & 125 & -[L(3 \La_1 + 3 \La_2)] & -4.41147... \\
1 & L(5 \poid_1 + 2 \poid_2) & 81 & 0 & 0 \\
1 & L(2 \poid_1 + 5 \poid_2) & 81 & 0 & 0 \\
2 & L(3 \poid_1 + 3 \poid_2) & 64 & [L(3 \La_1 + 3 \La_2)] & 4.41147... \\
1 & L(6 \poid_2) & 28 & L(6 \La_2) & 1 \\
2 & L(\poid_1 + 4 \poid_2) & 35 & [L(\La_0 + \La_1 + 4 \La_2)] & 5.41147... \\
2 & L(4 \poid_1 + \poid_2) & 35 & [L(\La_0 + 4 \La_1 + \La_2)] & 5.41147... \\
3 & L(2 \poid_1 + 2 \poid_2) & 27 & [L(2 \La_0 + 2 \La_1 + 2 \La_2)] & 8.63816... \\
1 & L(3 \poid_1) & 10 & [L(3 \La_0 + 3 \La_1)] & 4.41147... \\
1 & L(3 \poid_2) & 10 & [L(3 \La_0 + 3 \La_2)] & 4.41147... \\
2 & L(\poid_1 + \poid_2) & 8 & [L(4 \La_0 + \La_1 + \La_2)] & 5.41147... \\
\hline
\end{array}
\]
\end{scriptsize}
Let us verify the computation in the first row above explicitly, in accordance with \eqref{eq:Valcove}:
\[
\pik([L(4\poid_1+4\poid_2)]) = -[L(3\La_1+3\La_2)].
\]
Let $\la=-2\La_0+4\La_1+4\La_2 \in \Pk$ be the unique affine weight of level $\levk=6$ such that $\ol{\la} = 4\poid_1+4\poid_2$.
Since $\la$ is not an element of $\Pkp$, we employ the affine Weyl group action, given in \eqref{eq:fundamental_reflection_affine}, as follows:
\[
\begin{aligned}
s_0\cdot \la &= s_0\cdot (-2\La_0+4\La_1+4\La_2) \\
&= s_0(-2\La_0+4\La_1+4\La_2+\rho) - \rho \\
&= s_0(-\La_0+5\La_1+5\La_2) - \rho \\
&= (-\La_0+5\La_1+5\La_2) + \alpha_0 - \rho \pmod {\Z \nullroot} \\
&= (-\La_0+5\La_1+5\La_2) + (2\La_0-\La_1-\La_2) - (\La_0+\La_1+\La_2) \\
&= 3\La_1+3\La_2\in \Pkp.
\end{aligned}
\]
Let us consider the result in the second row:
\begin{equation}
\pik([L(5\poid_1 + 2\poid_2)]) = 0.
\label{eq:ex_pik}
\end{equation}
To verify this, 
let $\la=-\La_0+5\La_1+2\La_2 \in \Pk$ and $\ol{\la} = 5\poid_1+2\poid_2$.
We compute:
\[
\begin{aligned}
s_0\cdot \la &= s_0\cdot (-\La_0+5\La_1+2\La_2) \\
&= s_0(-\La_0+5\La_1+2\La_2+\rho) - \rho \\
&= -\La_0+5\La_1+2\La_2 \\
&= \la.
\end{aligned}
\]
Thus, $\la$ has a non-trivial isotropy subgroup under the shifted action of $W$, implying that it is null.
This confirms \eqref{eq:ex_pik}, from \eqref{eq:Valcove}.

We have
\[
\begin{aligned}
\phik([L(m)]) = &\ [L(3 \La_{0} + 3 \La_{1})] + [L(6 \La_{2})] + 2[L(4 \La_{0} + \La_{1} + \La_{2})] \\
& + 2[L(\La_{0} + 4 \La_{1} + \La_{2})] + 3[L(2 \La_{0} + 2 \La_{1} + 2 \La_{2})] \\
& + [L(3 \La_{0} + 3 \La_{2})] + [L(3 \La_{1} + 3 \La_{2})] + 2[L(\La_{0} + \La_{1} + 4 \La_{2})],
\end{aligned}
\]
which is positive, as expected in \conjref{conj:main1}.

\subsection{An example in type $B_3$}
We revisit the second case of \exref{Ex: AB 1}.
So, let \(\gzero\) be of type \(B_3\), with level \(\levk = 3\) and \(\ell = \levk - 1 = 2\).
In this case, $K_0(\Ctwo)$ is a cluster algebra, but not of finite cluster type.
Hence, there are infinitely many cluster variables.
The initial seed consists of 18 KR modules: 13 exchangeable and 5 frozen.
For the KR modules in the initial seed, we write
\begin{align*}
\xymatrix@C=0.7pc @R=.2pc{
x_{1}=[M_{5, 4}], & x_{2}=[M_{5, 0}], & x_{3} =[M_{5, -4}],\\
x_{4}=[M_{4, 2}], & x_{5}=[M_{4, -2}], & x_{6}=[M_{4, -6}],\\
x_{7} = [M_{3, 1}], & x_{8} = [M_{3, -1}], & x_{9} = [M_{3, -3}],\\ 
x_{10} = [M_{3, -5}], & x_{11}=[M_{3, -7}], & x_{12}=[M_{3, -9}],\\ 
x_{13} = [M_{2, 0}], & x_{14} = [M_{2, -4}], &x_{15} = [M_{2, -8}], \\
x_{16} = [M_{1, -2}], & x_{17} = [M_{1, -6}], &x_{18} = [M_{1, -10}].
}
\end{align*}
One of the cluster variables not in the initial seed is the class of the simple module
\[
M = L(Y_{1, 0}Y_{1, 4}Y_{2, -6}Y_{3, -3}Y_{3, -1}Y_{3, 1})
\]
of dimension $47880$. 
We can write $[M]\in K_0(\Ctwo)$ as a rational function in $x_i$ as follows:
\[
[M] = \frac{x_{10} x_3 x_5 x_7 + x_{10} x_3 x_4 x_9 + x_2 x_6 x_8 x_9}{x_5 x_8}.
\]
One can check that as a $\uqgzero$-module, $M$ decomposes as
\[
\begin{aligned}
& L(\poid_3) \oplus 6 L(3\poid_3) \oplus  L(5\poid_3) \oplus 5 L(\poid_2 + \poid_3) \oplus 5 L(\poid_2 + 3\poid_3) \\
&\oplus 3 L(2\poid_2 + \poid_3) \oplus  L(2\poid_2 + 3\poid_3) \oplus 5 L(\poid_1 + \poid_3) \oplus 6 L(\poid_1 + 3\poid_3) \\
&\oplus  L(\poid_1 + 5\poid_3) \oplus 10 L(\poid_1 + \poid_2 + \poid_3) \oplus 2 L(\poid_1 + \poid_2 + 3\poid_3) \\
&\oplus 3 L(\poid_1 + 2\poid_2 + \poid_3) \oplus 4 L(2\poid_1 + \poid_3) \oplus 5 L(2\poid_1 + 3\poid_3) \\
&\oplus 4 L(2\poid_1 + \poid_2 + \poid_3) \oplus  L(2\poid_1 + \poid_2 + 3\poid_3) \oplus 3 L(3\poid_1 + \poid_3) \\
&\oplus  L(3\poid_1 + 3\poid_3) \oplus  L(3\poid_1 + \poid_2 + \poid_3) \oplus  L(4\poid_1 + \poid_3).
\end{aligned}
\]
The image of an irreducible summand under $\pik$ is given in the following :
\begin{scriptsize}
\[
\begin{array}{|c|c|c|c|c|}
\hline
\text{Multiplicity} & \text{Irreducible Summand $V$} & \dim V & \pik([V]) & \qdimk V \\
\hline
1 & L(\poid_3) & 8 & [L(2\La_0 + \La_3)] & 3.62451... \\
2 & L(3\poid_3) & 112 & [L(3\La_3)] & 2.77408... \\
3 & L(\poid_2 + \poid_3) & 112 & [L(\La_2 + \La_3)] & 5.12583... \\
1 & L(\poid_2 + 3\poid_3) & 1008 & -[L(3\La_3)] & -2.77408... \\
2 & L(2\poid_2 + \poid_3) & 720 & -[L(\La_2 + \La_3)] & -5.12583... \\
2 & L(\poid_1 + \poid_3) & 48 & [L(\La_0 + \La_1 + \La_3)] & 6.69722... \\
3 & L(\poid_1 + 3\poid_3) & 560 & 0 & 0 \\
5 & L(\poid_1 + \poid_2 + \poid_3) & 512 & 0 & 0 \\
1 & L(\poid_1 + \poid_2 + 3\poid_3) & 4096 & 0 & 0 \\
2 & L(\poid_1 + 2\poid_2 + \poid_3) & 2800 & -[L(\La_0 + \La_1 + \La_3)] & -6.69722... \\
2 & L(2\poid_1 + \poid_3) & 168 & [L(2\La_1 + \La_3)] & 3.62451... \\
3 & L(2\poid_1 + 3\poid_3) & 1728 & 0 & 0 \\
3 & L(2\poid_1 + \poid_2 + \poid_3) & 1512 & -[L(2\La_1 + \La_3)] & -3.62451... \\
1 & L(2\poid_1 + \poid_2 + 3\poid_3) & 11088 & [L(\La_0 + \La_1 + \La_3)] & 6.69722... \\
2 & L(3\poid_1 + \poid_3) & 448 & 0 & 0 \\
1 & L(3\poid_1 + 3\poid_3) & 4200 & [L(2\La_1 + \La_3)] & 3.62451... \\
1 & L(3\poid_1 + \poid_2 + \poid_3) & 3584 & 0 & 0 \\
1 & L(4\poid_1 + \poid_3) & 1008 & 0 & 0 \\
\hline
\end{array}
\]
\end{scriptsize}
We find that  
\[
\begin{aligned}
\phik([M]) = [L(2\La_0 + \La_3)] + [L(3\La_3)] + [L(\La_2 + \La_3)] + [L(\La_0 + \La_1 + \La_3)],
\end{aligned}
\]
which, as predicted by \conjref{conj:main1}, is positive.

\appendix

\section{List of cluster variables in $\Cone$ for exceptional types}\label{app:A}
We provide a complete list of cluster variables of \(\Cone\) for exceptional types, along with the corresponding monomials and their images in the Verlinde ring here.
An extended version of the list, including expressions of cluster variables as rational functions of the initial variables, is available at \url{https://github.com/chlee-0/verlinde_cluster}. For instance, for type \(E_6\), refer to \url{https://github.com/chlee-0/verlinde_cluster/blob/main/cluster_variables_E6.txt}, among others.
As before, we choose the height function as in \eqref{eq:height-function}.

\begin{scriptsize}

\begin{longtable}{|c|c|c|}
\hline
\textbf{Cluster variable} & \textbf{Dominant monomial} & \textbf{Verlinde image} \\ \hline
$x_{1}$ & $Y_{1, -1}$ & $V_1$ \\ \hline
$x_{2}$ & $Y_{1, -3}Y_{1, -1}$ & $V_3$ \\ \hline
$x_{3}$ & $Y_{3, 0}$ & $V_2 + V_6$ \\ \hline
$x_{4}$ & $Y_{3, -2}Y_{3, 0}$ & $V_8$ \\ \hline
$x_{5}$ & $Y_{4, 1}$ & $V_0 + V_4 + 2  V_5$ \\ \hline
$x_{6}$ & $Y_{4, -1}Y_{4, 1}$ & $V_0$ \\ \hline
$x_{7}$ & $Y_{2, 2}$ & $V_0 + V_5$ \\ \hline
$x_{8}$ & $Y_{2, 0}Y_{2, 2}$ & $V_0$ \\ \hline
$x_{9}$ & $Y_{5, 2}$ & $V_1 + V_7$ \\ \hline
$x_{10}$ & $Y_{5, 0}Y_{5, 2}$ & $V_3$ \\ \hline
$x_{11}$ & $Y_{6, 3}$ & $V_2$ \\ \hline
$x_{12}$ & $Y_{6, 1}Y_{6, 3}$ & $V_8$ \\ \hline
$x_{13}$ & $Y_{3, -2}Y_{4, 1}$ & $V_2$ \\ \hline
$x_{14}$ & $Y_{5, 0}Y_{6, 3}$ & $V_4$ \\ \hline
$x_{15}$ & $Y_{2, 2}Y_{3, -2}Y_{6, 3}$ & $V_7 + V_8$ \\ \hline
$x_{16}$ & $Y_{3, -2}Y_{6, 3}$ & $V_7 + V_8$ \\ \hline
$x_{17}$ & $Y_{1, -3}Y_{6, 3}$ & $V_4$ \\ \hline
$x_{18}$ & $Y_{1, -3}Y_{2, 2}Y_{3, -2}Y_{6, 3}$ & $V_2 + V_6$ \\ \hline
$x_{19}$ & $Y_{2, 2}Y_{4, -1}Y_{6, 3}$ & $V_2$ \\ \hline
$x_{20}$ & $Y_{2, 2}Y_{3, -2}Y_{4, -1}Y_{5, 2}Y_{6, 3}$ & $V_2 + V_6$ \\ \hline
$x_{21}$ & $Y_{1, -3}Y_{2, 2}Y_{3, -2}Y_{4, -1}Y_{5, 2}Y_{6, 3}$ & $V_0 + V_4 + 2  V_5$ \\ \hline
$x_{22}$ & $Y_{1, -3}Y_{2, 2}Y_{4, -1}Y_{5, 2}$ & $V_3 + V_6$ \\ \hline
$x_{23}$ & $Y_{2, 2}Y_{3, -2}$ & $V_2$ \\ \hline
$x_{24}$ & $Y_{2, 2}Y_{3, -2}Y_{4, -1}Y_{5, 2}$ & $V_4 + V_5$ \\ \hline
$x_{25}$ & $Y_{3, -2}Y_{5, 2}$ & $V_4$ \\ \hline
$x_{26}$ & $Y_{1, -3}$ & $V_1$ \\ \hline
$x_{27}$ & $Y_{6, 1}$ & $V_2$ \\ \hline
$x_{28}$ & $Y_{3, -2}$ & $V_2 + V_6$ \\ \hline
$x_{29}$ & $Y_{4, -1}Y_{5, 2}$ & $V_1$ \\ \hline
$x_{30}$ & $Y_{2, 0}$ & $V_0 + V_5$ \\ \hline
$x_{31}$ & $Y_{4, -1}$ & $V_0 + V_4 + 2  V_5$ \\ \hline
$x_{32}$ & $Y_{4, -1}Y_{6, 3}$ & $V_2 + V_6$ \\ \hline
$x_{33}$ & $Y_{2, 2}Y_{4, -1}$ & $V_0 + V_5$ \\ \hline
$x_{34}$ & $Y_{1, -3}Y_{3, 0}$ & $V_4$ \\ \hline
$x_{35}$ & $Y_{1, -3}Y_{2, 2}$ & $V_1$ \\ \hline
$x_{36}$ & $Y_{2, 2}Y_{4, -1}Y_{5, 2}$ & $V_1$ \\ \hline
$x_{37}$ & $Y_{1, -3}Y_{2, 2}Y_{6, 3}$ & $V_4 + V_5$ \\ \hline
$x_{38}$ & $Y_{1, -3}Y_{2, 2}Y_{5, 2}$ & $V_2 + V_3 + 2  V_6$ \\ \hline
$x_{39}$ & $Y_{2, 2}Y_{3, -2}Y_{5, 2}$ & $V_4 + V_5$ \\ \hline
$x_{40}$ & $Y_{1, -3}Y_{5, 2}$ & $V_3 + V_6$ \\ \hline
$x_{41}$ & $Y_{1, -3}Y_{2, 2}Y_{4, -1}Y_{6, 3}$ & $V_4 + V_5$ \\ \hline
$x_{42}$ & $Y_{2, 2}Y_{3, -2}Y_{4, -1}Y_{6, 3}$ & $V_1 + 2  V_7 + V_8$ \\ \hline
$x_{43}$ & $Y_{1, -3}Y_{2, 2}Y_{4, -1}Y_{5, 2}Y_{6, 3}$ & $V_1 + V_7$ \\ \hline
$x_{44}$ & $Y_{1, -3}Y_{2, 2}Y_{3, -2}Y_{5, 2}Y_{6, 3}$ & $V_0 + V_4 + 2  V_5$ \\ \hline
$x_{45}$ & $Y_{1, -3}Y_{2, 2}Y_{3, -2}Y_{5, 2}$ & $V_1 + V_7$ \\ \hline
$x_{46}$ & $Y_{5, 0}$ & $V_1 + V_7$ \\ \hline
$x_{47}$ & $Y_{1, -3}Y_{4, 1}$ & $V_1 + V_7$ \\ \hline
$x_{48}$ & $Y_{1, -3}Y_{2, 2}Y_{2, 2}Y_{3, -2}Y_{4, -1}Y_{5, 2}Y_{6, 3}$ & $V_0 + V_4 + 2  V_5$ \\ \hline
\caption{Cluster variables with corresponding dominant monomials and their images under $\phik$ in $\Cone$ of type $E_6$.}
\label{tab:E6_cluster} 
\end{longtable}

\begin{longtable}{|c|c|c|}
\hline
\textbf{Cluster variable} & \textbf{Dominant monomial} & \textbf{Verlinde image} \\ \hline
$x_{1}$ & $Y_{1, -1}$ & $V_0 + V_2$ \\ \hline
$x_{2}$ & $Y_{1, -3}Y_{1, -1}$ & $V_0$ \\ \hline
$x_{3}$ & $Y_{3, 0}$ & $V_0 + 2  V_2 + V_4$ \\ \hline
$x_{4}$ & $Y_{3, -2}Y_{3, 0}$ & $V_0$ \\ \hline
$x_{5}$ & $Y_{4, 1}$ & $2  V_0 + 3  V_2 + 3  V_4 + V_5$ \\ \hline
$x_{6}$ & $Y_{4, -1}Y_{4, 1}$ & $V_0$ \\ \hline
$x_{7}$ & $Y_{2, 2}$ & $V_1 + V_3$ \\ \hline
$x_{8}$ & $Y_{2, 0}Y_{2, 2}$ & $V_5$ \\ \hline
$x_{9}$ & $Y_{5, 2}$ & $2  V_1 + 2  V_3$ \\ \hline
$x_{10}$ & $Y_{5, 0}Y_{5, 2}$ & $V_5$ \\ \hline
$x_{11}$ & $Y_{6, 3}$ & $V_0 + V_2 + V_4$ \\ \hline
$x_{12}$ & $Y_{6, 1}Y_{6, 3}$ & $V_0$ \\ \hline
$x_{13}$ & $Y_{7, 4}$ & $V_1$ \\ \hline
$x_{14}$ & $Y_{7, 2}Y_{7, 4}$ & $V_5$ \\ \hline
$x_{15}$ & $Y_{5, 0}Y_{6, 3}$ & $V_1$ \\ \hline
$x_{16}$ & $Y_{5, 0}Y_{7, 4}$ & $V_2 + V_4 + V_5$ \\ \hline
$x_{17}$ & $Y_{3, -2}Y_{4, 1}$ & $V_0 + V_2$ \\ \hline
$x_{18}$ & $Y_{2, 2}Y_{3, -2}Y_{7, 4}$ & $V_0 + V_2$ \\ \hline
$x_{19}$ & $Y_{1, -3}Y_{2, 2}Y_{7, 4}$ & $V_0 + V_2 + V_4$ \\ \hline
$x_{20}$ & $Y_{1, -3}Y_{7, 4}$ & $V_1$ \\ \hline
$x_{21}$ & $Y_{1, -3}Y_{2, 2}$ & $V_1$ \\ \hline
$x_{22}$ & $Y_{6, 1}Y_{7, 4}$ & $V_1$ \\ \hline
$x_{23}$ & $Y_{1, -3}Y_{2, 2}Y_{3, -2}Y_{5, 2}Y_{7, 4}$ & $2  V_1 + 2  V_3$ \\ \hline
$x_{24}$ & $Y_{1, -3}Y_{2, 2}Y_{4, -1}Y_{5, 2}Y_{7, 4}$ & $V_1 + V_3$ \\ \hline
$x_{25}$ & $Y_{1, -3}Y_{5, 2}$ & $V_1 + V_3$ \\ \hline
$x_{26}$ & $Y_{1, -3}Y_{2, 2}Y_{3, -2}Y_{5, 2}$ & $V_0 + V_2 + V_4$ \\ \hline
$x_{27}$ & $Y_{2, 2}Y_{4, -1}Y_{5, 2}$ & $V_0 + V_2$ \\ \hline
$x_{28}$ & $Y_{3, -2}Y_{5, 2}$ & $V_1$ \\ \hline
$x_{29}$ & $Y_{1, -3}Y_{2, 2}Y_{4, -1}Y_{5, 2}Y_{6, 3}$ & $V_0 + 2  V_2 + V_4$ \\ \hline
$x_{30}$ & $Y_{1, -3}Y_{2, 2}Y_{4, -1}Y_{5, 2}$ & $V_0 + V_2$ \\ \hline
$x_{31}$ & $Y_{7, 2}$ & $V_1$ \\ \hline
$x_{32}$ & $Y_{1, -3}Y_{2, 2}Y_{2, 2}Y_{3, -2}Y_{4, -1}Y_{5, 2}Y_{6, 3}$ & $2  V_1 + 2  V_3$ \\ \hline
$x_{33}$ & $Y_{2, 2}Y_{3, -2}Y_{4, -1}Y_{5, 2}Y_{6, 3}$ & $V_0 + V_2 + V_4$ \\ \hline
$x_{34}$ & $Y_{1, -3}Y_{6, 3}$ & $V_0 + V_2$ \\ \hline
$x_{35}$ & $Y_{2, 2}Y_{4, -1}Y_{6, 3}$ & $V_1$ \\ \hline
$x_{36}$ & $Y_{1, -3}Y_{2, 2}Y_{3, -2}Y_{6, 3}$ & $V_1 + V_3$ \\ \hline
$x_{37}$ & $Y_{2, 2}Y_{3, -2}Y_{6, 3}$ & $V_1 + V_3$ \\ \hline
$x_{38}$ & $Y_{1, -3}Y_{2, 2}Y_{3, -2}Y_{6, 3}Y_{7, 4}$ & $V_0 + 2  V_2 + V_4$ \\ \hline
$x_{39}$ & $Y_{3, -2}Y_{6, 3}$ & $V_0 + V_2$ \\ \hline
$x_{40}$ & $Y_{3, -2}Y_{7, 4}$ & $V_1 + V_3$ \\ \hline
$x_{41}$ & $Y_{2, 0}$ & $V_1 + V_3$ \\ \hline
$x_{42}$ & $Y_{4, -1}Y_{7, 4}$ & $2  V_1 + 2  V_3$ \\ \hline
$x_{43}$ & $Y_{1, -3}$ & $V_0 + V_2$ \\ \hline
$x_{44}$ & $Y_{3, -2}$ & $V_0 + 2  V_2 + V_4$ \\ \hline
$x_{45}$ & $Y_{4, -1}$ & $2  V_0 + 3  V_2 + 3  V_4 + V_5$ \\ \hline
$x_{46}$ & $Y_{4, -1}Y_{5, 2}$ & $V_1$ \\ \hline
$x_{47}$ & $Y_{4, -1}Y_{6, 3}$ & $V_0 + V_2 + V_4$ \\ \hline
$x_{48}$ & $Y_{2, 2}Y_{4, -1}$ & $V_1 + V_3$ \\ \hline
$x_{49}$ & $Y_{1, -3}Y_{3, 0}$ & $V_0 + V_2$ \\ \hline
$x_{50}$ & $Y_{1, -3}Y_{2, 2}Y_{6, 3}$ & $2  V_1 + 2  V_3$ \\ \hline
$x_{51}$ & $Y_{1, -3}Y_{4, 1}$ & $V_0 + 2  V_2 + V_4$ \\ \hline
$x_{52}$ & $Y_{1, -3}Y_{2, 2}Y_{5, 2}$ & $2  V_0 + 3  V_2 + 3  V_4 + V_5$ \\ \hline
$x_{53}$ & $Y_{6, 1}$ & $V_0 + V_2 + V_4$ \\ \hline
$x_{54}$ & $Y_{1, -3}Y_{2, 2}Y_{3, -2}Y_{4, -1}Y_{5, 2}Y_{6, 3}Y_{7, 4}$ & $2  V_1 + 2  V_3$ \\ \hline
$x_{55}$ & $Y_{1, -3}Y_{1, -3}Y_{2, 2}Y_{2, 2}Y_{3, -2}Y_{4, -1}Y_{5, 2}Y_{6, 3}Y_{7, 4}$ & $2  V_0 + 3  V_2 + 3  V_4 + V_5$ \\ \hline
$x_{56}$ & $Y_{1, -3}Y_{2, 2}Y_{4, -1}Y_{6, 3}Y_{7, 4}$ & $V_0 + V_2 + V_4$ \\ \hline
$x_{57}$ & $Y_{1, -3}Y_{2, 2}Y_{2, 2}Y_{3, -2}Y_{4, -1}Y_{5, 2}Y_{7, 4}$ & $V_0 + 2  V_2 + V_4$ \\ \hline
$x_{58}$ & $Y_{2, 2}Y_{4, -1}Y_{7, 4}$ & $V_0 + V_2$ \\ \hline
$x_{59}$ & $Y_{5, 0}$ & $2  V_1 + 2  V_3$ \\ \hline
$x_{60}$ & $Y_{2, 2}Y_{3, -2}$ & $V_1$ \\ \hline
$x_{61}$ & $Y_{1, -3}Y_{2, 2}Y_{3, -2}Y_{7, 4}$ & $V_0 + V_2 + V_4$ \\ \hline
$x_{62}$ & $Y_{1, -3}Y_{2, 2}Y_{3, -2}Y_{4, -1}Y_{5, 2}Y_{6, 3}$ & $V_0 + 2  V_2 + V_4$ \\ \hline
$x_{63}$ & $Y_{1, -3}Y_{2, 2}Y_{2, 2}Y_{3, -2}Y_{3, -2}Y_{4, -1}Y_{5, 2}Y_{6, 3}Y_{7, 4}$ & $2  V_0 + 3  V_2 + 3  V_4 + V_5$ \\ \hline
$x_{64}$ & $Y_{2, 2}Y_{3, -2}Y_{4, -1}Y_{7, 4}$ & $2  V_0 + 3  V_2 + 3  V_4 + V_5$ \\ \hline
$x_{65}$ & $Y_{2, 2}Y_{3, -2}Y_{4, -1}Y_{5, 2}$ & $V_0 + V_2 + V_4$ \\ \hline
$x_{66}$ & $Y_{2, 2}Y_{3, -2}Y_{4, -1}Y_{6, 3}$ & $2  V_1 + 2  V_3$ \\ \hline
$x_{67}$ & $Y_{1, -3}Y_{2, 2}Y_{4, -1}Y_{7, 4}$ & $V_0 + 2  V_2 + V_4$ \\ \hline
$x_{68}$ & $Y_{1, -3}Y_{2, 2}Y_{4, -1}Y_{6, 3}$ & $V_1 + V_3$ \\ \hline
$x_{69}$ & $Y_{2, 2}Y_{3, -2}Y_{5, 2}$ & $V_0 + 2  V_2 + V_4$ \\ \hline
$x_{70}$ & $Y_{2, 2}Y_{3, -2}Y_{4, -1}Y_{6, 3}Y_{7, 4}$ & $V_0 + 2  V_2 + V_4$ \\ \hline
$x_{71}$ & $Y_{2, 2}Y_{3, -2}Y_{4, -1}Y_{5, 2}Y_{7, 4}$ & $V_1 + V_3$ \\ \hline
$x_{72}$ & $Y_{1, -3}Y_{2, 2}Y_{2, 2}Y_{3, -2}Y_{4, -1}Y_{6, 3}Y_{7, 4}$ & $2  V_1 + 2  V_3$ \\ \hline
$x_{73}$ & $Y_{1, -3}Y_{2, 2}Y_{2, 2}Y_{3, -2}Y_{4, -1}Y_{4, -1}Y_{5, 2}Y_{6, 3}Y_{7, 4}$ & $2  V_0 + 3  V_2 + 3  V_4 + V_5$ \\ \hline
$x_{74}$ & $Y_{1, -3}Y_{2, 2}Y_{3, -2}Y_{4, -1}Y_{5, 2}Y_{7, 4}$ & $2  V_1 + 2  V_3$ \\ \hline
$x_{75}$ & $Y_{1, -3}Y_{2, 2}Y_{3, -2}Y_{5, 2}Y_{6, 3}$ & $2  V_0 + 3  V_2 + 3  V_4 + V_5$ \\ \hline
$x_{76}$ & $Y_{1, -3}Y_{2, 2}Y_{2, 2}Y_{3, -2}Y_{4, -1}Y_{5, 2}Y_{6, 3}Y_{7, 4}$ & $2  V_0 + 3  V_2 + 3  V_4 + V_5$ \\ \hline
$x_{77}$ & $Y_{1, -3}Y_{2, 2}Y_{3, -2}Y_{4, -1}Y_{6, 3}Y_{7, 4}$ & $2  V_0 + 3  V_2 + 3  V_4 + V_5$ \\ \hline
\caption{Cluster variables with corresponding dominant monomials and their images under $\phik$ in $\Cone$ of type $E_7$.}
\label{tab:E7_cluster} 
\end{longtable}

\begin{longtable}{|c|c|c|}
\hline
\textbf{Cluster variable} & \textbf{Dominant monomial} & \textbf{Verlinde image} \\ \hline
$x_{1}$ & $Y_{1, -1}$ & $V_0 + V_1 + V_2$ \\ \hline
$x_{2}$ & $Y_{1, -3}Y_{1, -1}$ & $V_0$ \\ \hline
$x_{3}$ & $Y_{3, 0}$ & $2  V_0 + 3  V_1 + 4  V_2$ \\ \hline
$x_{4}$ & $Y_{3, -2}Y_{3, 0}$ & $V_0$ \\ \hline
$x_{5}$ & $Y_{4, 1}$ & $8  V_0 + 8  V_1 + 12  V_2$ \\ \hline
$x_{6}$ & $Y_{4, -1}Y_{4, 1}$ & $V_0$ \\ \hline
$x_{7}$ & $Y_{2, 2}$ & $V_0 + 2  V_1 + 2  V_2$ \\ \hline
$x_{8}$ & $Y_{2, 0}Y_{2, 2}$ & $V_0$ \\ \hline
$x_{9}$ & $Y_{5, 2}$ & $4  V_0 + 5  V_1 + 6  V_2$ \\ \hline
$x_{10}$ & $Y_{5, 0}Y_{5, 2}$ & $V_0$ \\ \hline
$x_{11}$ & $Y_{6, 3}$ & $2  V_0 + 3  V_1 + 3  V_2$ \\ \hline
$x_{12}$ & $Y_{6, 1}Y_{6, 3}$ & $V_0$ \\ \hline
$x_{13}$ & $Y_{7, 4}$ & $V_0 + V_1 + 2  V_2$ \\ \hline
$x_{14}$ & $Y_{7, 2}Y_{7, 4}$ & $V_0$ \\ \hline
$x_{15}$ & $Y_{8, 5}$ & $V_0 + V_2$ \\ \hline
$x_{16}$ & $Y_{8, 3}Y_{8, 5}$ & $V_0$ \\ \hline
$x_{17}$ & $Y_{8, 3}$ & $V_0 + V_2$ \\ \hline
$x_{18}$ & $Y_{7, 2}$ & $V_0 + V_1 + 2  V_2$ \\ \hline
$x_{19}$ & $Y_{6, 1}$ & $2  V_0 + 3  V_1 + 3  V_2$ \\ \hline
$x_{20}$ & $Y_{2, 2}Y_{4, -1}Y_{5, 2}$ & $V_0 + V_1 + V_2$ \\ \hline
$x_{21}$ & $Y_{1, -3}Y_{3, 0}$ & $V_0 + V_1 + V_2$ \\ \hline
$x_{22}$ & $Y_{1, -3}Y_{2, 2}Y_{5, 2}$ & $8  V_0 + 8  V_1 + 12  V_2$ \\ \hline
$x_{23}$ & $Y_{1, -3}Y_{2, 2}$ & $V_0 + V_2$ \\ \hline
$x_{24}$ & $Y_{1, -3}Y_{5, 2}$ & $V_0 + 2  V_1 + 2  V_2$ \\ \hline
$x_{25}$ & $Y_{7, 2}Y_{8, 5}$ & $V_0 + V_2$ \\ \hline
$x_{26}$ & $Y_{6, 1}Y_{8, 5}$ & $V_0 + V_1 + 2  V_2$ \\ \hline
$x_{27}$ & $Y_{1, -3}Y_{2, 2}Y_{8, 5}$ & $V_0 + V_1 + 2  V_2$ \\ \hline
$x_{28}$ & $Y_{1, -3}Y_{4, 1}$ & $2  V_0 + 3  V_1 + 4  V_2$ \\ \hline
$x_{29}$ & $Y_{3, -2}Y_{4, 1}$ & $V_0 + V_1 + V_2$ \\ \hline
$x_{30}$ & $Y_{1, -3}Y_{8, 5}$ & $V_0 + V_2$ \\ \hline
$x_{31}$ & $Y_{5, 0}Y_{8, 5}$ & $2  V_0 + 3  V_1 + 3  V_2$ \\ \hline
$x_{32}$ & $Y_{6, 1}Y_{7, 4}$ & $V_0 + V_2$ \\ \hline
$x_{33}$ & $Y_{5, 0}Y_{7, 4}$ & $V_0 + V_1 + 2  V_2$ \\ \hline
$x_{34}$ & $Y_{2, 0}$ & $V_0 + 2  V_1 + 2  V_2$ \\ \hline
$x_{35}$ & $Y_{3, -2}Y_{8, 5}$ & $V_0 + 2  V_1 + 2  V_2$ \\ \hline
$x_{36}$ & $Y_{4, -1}Y_{8, 5}$ & $4  V_0 + 5  V_1 + 6  V_2$ \\ \hline
$x_{37}$ & $Y_{3, -2}$ & $2  V_0 + 3  V_1 + 4  V_2$ \\ \hline
$x_{38}$ & $Y_{4, -1}Y_{7, 4}$ & $2  V_0 + 3  V_1 + 3  V_2$ \\ \hline
$x_{39}$ & $Y_{5, 0}Y_{6, 3}$ & $V_0 + V_2$ \\ \hline
$x_{40}$ & $Y_{4, -1}Y_{6, 3}$ & $V_0 + V_1 + 2  V_2$ \\ \hline
$x_{41}$ & $Y_{2, 2}Y_{3, -2}Y_{4, -1}Y_{8, 5}$ & $8  V_0 + 8  V_1 + 12  V_2$ \\ \hline
$x_{42}$ & $Y_{2, 2}Y_{4, -1}$ & $V_0 + 2  V_1 + 2  V_2$ \\ \hline
$x_{43}$ & $Y_{2, 2}Y_{3, -2}Y_{4, -1}Y_{7, 4}$ & $4  V_0 + 5  V_1 + 6  V_2$ \\ \hline
$x_{44}$ & $Y_{1, -3}$ & $V_0 + V_1 + V_2$ \\ \hline
$x_{45}$ & $Y_{1, -3}Y_{2, 2}Y_{4, -1}Y_{8, 5}$ & $2  V_0 + 3  V_1 + 4  V_2$ \\ \hline
$x_{46}$ & $Y_{2, 2}Y_{3, -2}Y_{4, -1}Y_{6, 3}$ & $2  V_0 + 3  V_1 + 3  V_2$ \\ \hline
$x_{47}$ & $Y_{4, -1}Y_{5, 2}$ & $V_0 + V_2$ \\ \hline
$x_{48}$ & $Y_{2, 2}Y_{3, -2}Y_{4, -1}Y_{5, 2}$ & $V_0 + V_1 + 2  V_2$ \\ \hline
$x_{49}$ & $Y_{1, -3}Y_{2, 2}Y_{4, -1}Y_{7, 4}$ & $V_0 + 2  V_1 + 2  V_2$ \\ \hline
$x_{50}$ & $Y_{1, -3}Y_{2, 2}Y_{3, -2}Y_{4, -1}Y_{7, 4}Y_{8, 5}$ & $8  V_0 + 8  V_1 + 12  V_2$ \\ \hline
$x_{51}$ & $Y_{1, -3}Y_{2, 2}Y_{3, -2}Y_{4, -1}Y_{6, 3}Y_{8, 5}$ & $4  V_0 + 5  V_1 + 6  V_2$ \\ \hline
$x_{52}$ & $Y_{2, 2}Y_{4, -1}Y_{8, 5}$ & $V_0 + V_1 + V_2$ \\ \hline
$x_{53}$ & $Y_{2, 2}Y_{3, -2}Y_{4, -1}Y_{7, 4}Y_{8, 5}$ & $2  V_0 + 3  V_1 + 4  V_2$ \\ \hline
$x_{54}$ & $Y_{1, -3}Y_{2, 2}Y_{3, -2}Y_{4, -1}Y_{5, 2}Y_{8, 5}$ & $2  V_0 + 3  V_1 + 3  V_2$ \\ \hline
$x_{55}$ & $Y_{2, 2}Y_{3, -2}$ & $V_0 + V_2$ \\ \hline
$x_{56}$ & $Y_{1, -3}Y_{2, 2}Y_{3, -2}Y_{8, 5}$ & $V_0 + V_1 + 2  V_2$ \\ \hline
$x_{57}$ & $Y_{2, 2}Y_{3, -2}Y_{4, -1}Y_{6, 3}Y_{8, 5}$ & $V_0 + 2  V_1 + 2  V_2$ \\ \hline
$x_{58}$ & $Y_{2, 2}Y_{3, -2}Y_{4, -1}Y_{5, 2}Y_{8, 5}$ & $V_0 + V_1 + V_2$ \\ \hline
$x_{59}$ & $Y_{3, -2}Y_{7, 4}$ & $V_0 + V_1 + V_2$ \\ \hline
$x_{60}$ & $Y_{3, -2}Y_{6, 3}$ & $V_0 + V_2$ \\ \hline
$x_{61}$ & $Y_{2, 2}Y_{3, -2}Y_{8, 5}$ & $V_0 + V_2$ \\ \hline
$x_{62}$ & $Y_{1, -3}Y_{2, 2}Y_{3, -2}Y_{4, -1}Y_{6, 3}Y_{7, 4}Y_{8, 5}$ & $2  V_0 + 3  V_1 + 3  V_2$ \\ \hline
$x_{63}$ & $Y_{1, -3}Y_{2, 2}Y_{3, -2}Y_{4, -1}Y_{5, 2}Y_{7, 4}Y_{8, 5}$ & $V_0 + 2  V_1 + 2  V_2$ \\ \hline
$x_{64}$ & $Y_{1, -3}Y_{2, 2}Y_{3, -2}Y_{7, 4}Y_{8, 5}$ & $V_0 + V_1 + 2  V_2$ \\ \hline
$x_{65}$ & $Y_{1, -3}Y_{2, 2}Y_{4, -1}Y_{6, 3}Y_{8, 5}$ & $V_0 + V_1 + V_2$ \\ \hline
$x_{66}$ & $Y_{1, -3}Y_{2, 2}Y_{4, -1}Y_{7, 4}Y_{8, 5}$ & $V_0 + V_1 + 2  V_2$ \\ \hline
$x_{67}$ & $Y_{1, -3}Y_{7, 4}$ & $V_0 + V_2$ \\ \hline
$x_{68}$ & $Y_{2, 2}Y_{4, -1}Y_{7, 4}$ & $V_0 + V_2$ \\ \hline
$x_{69}$ & $Y_{1, -3}Y_{1, -3}Y_{2, 2}Y_{2, 2}Y_{3, -2}Y_{4, -1}Y_{6, 3}Y_{7, 4}Y_{8, 5}$ & $2  V_0 + 3  V_1 + 4  V_2$ \\ \hline
$x_{70}$ & $Y_{1, -3}Y_{1, -3}Y_{2, 2}Y_{2, 2}Y_{3, -2}Y_{4, -1}Y_{5, 2}Y_{7, 4}Y_{8, 5}$ & $2  V_0 + 3  V_1 + 3  V_2$ \\ \hline
$x_{71}$ & $Y_{1, -3}Y_{2, 2}Y_{4, -1}Y_{5, 2}$ & $V_0 + V_2$ \\ \hline
$x_{72}$ & $Y_{1, -3}Y_{2, 2}Y_{3, -2}Y_{7, 4}$ & $V_0 + V_1 + V_2$ \\ \hline
$x_{73}$ & $Y_{1, -3}Y_{2, 2}Y_{2, 2}Y_{3, -2}Y_{4, -1}Y_{6, 3}Y_{7, 4}$ & $V_0 + 2  V_1 + 2  V_2$ \\ \hline
$x_{74}$ & $Y_{1, -3}Y_{2, 2}Y_{2, 2}Y_{3, -2}Y_{4, -1}Y_{5, 2}Y_{8, 5}$ & $V_0 + V_1 + 2  V_2$ \\ \hline
$x_{75}$ & $Y_{1, -3}Y_{2, 2}Y_{3, -2}Y_{6, 3}$ & $V_0 + V_1 + V_2$ \\ \hline
$x_{76}$ & $Y_{1, -3}Y_{2, 2}Y_{2, 2}Y_{3, -2}Y_{4, -1}Y_{5, 2}Y_{7, 4}$ & $V_0 + 2  V_1 + 2  V_2$ \\ \hline
$x_{77}$ & $Y_{1, -3}Y_{2, 2}Y_{2, 2}Y_{3, -2}Y_{3, -2}Y_{4, -1}Y_{5, 2}Y_{6, 3}Y_{7, 4}$ & $2  V_0 + 3  V_1 + 4  V_2$ \\ \hline
$x_{78}$ & $Y_{2, 2}Y_{4, -1}Y_{6, 3}$ & $V_0 + V_2$ \\ \hline
$x_{79}$ & $Y_{2, 2}Y_{3, -2}Y_{4, -1}Y_{5, 2}Y_{6, 3}$ & $V_0 + V_1 + 2  V_2$ \\ \hline
$x_{80}$ & $Y_{2, 2}Y_{3, -2}Y_{7, 4}$ & $V_0 + V_1 + V_2$ \\ \hline
$x_{81}$ & $Y_{2, 2}Y_{3, -2}Y_{4, -1}Y_{5, 2}Y_{7, 4}$ & $V_0 + V_1 + V_2$ \\ \hline
$x_{82}$ & $Y_{1, -3}Y_{2, 2}Y_{2, 2}Y_{3, -2}Y_{3, -2}Y_{4, -1}Y_{5, 2}Y_{6, 3}Y_{7, 4}Y_{8, 5}$ & $4  V_0 + 5  V_1 + 6  V_2$ \\ \hline
$x_{83}$ & $Y_{1, -3}Y_{2, 2}Y_{3, -2}Y_{4, -1}Y_{5, 2}Y_{6, 3}Y_{7, 4}$ & $2  V_0 + 3  V_1 + 3  V_2$ \\ \hline
$x_{84}$ & $Y_{3, -2}Y_{5, 2}$ & $V_0 + V_2$ \\ \hline
$x_{85}$ & $Y_{1, -3}Y_{2, 2}Y_{3, -2}Y_{4, -1}Y_{5, 2}Y_{6, 3}Y_{8, 5}$ & $V_0 + 2  V_1 + 2  V_2$ \\ \hline
$x_{86}$ & $Y_{1, -3}Y_{2, 2}Y_{3, -2}Y_{6, 3}Y_{8, 5}$ & $V_0 + 2  V_1 + 2  V_2$ \\ \hline
$x_{87}$ & $Y_{1, -3}Y_{2, 2}Y_{4, -1}Y_{5, 2}Y_{8, 5}$ & $V_0 + V_1 + V_2$ \\ \hline
$x_{88}$ & $Y_{1, -3}Y_{6, 3}$ & $V_0 + V_1 + V_2$ \\ \hline
$x_{89}$ & $Y_{1, -3}Y_{1, -3}Y_{2, 2}Y_{2, 2}Y_{3, -2}Y_{4, -1}Y_{5, 2}Y_{6, 3}Y_{8, 5}$ & $2  V_0 + 3  V_1 + 4  V_2$ \\ \hline
$x_{90}$ & $Y_{1, -3}Y_{2, 2}Y_{2, 2}Y_{3, -2}Y_{4, -1}Y_{6, 3}Y_{8, 5}$ & $V_0 + 2  V_1 + 2  V_2$ \\ \hline
$x_{91}$ & $Y_{1, -3}Y_{2, 2}Y_{2, 2}Y_{3, -2}Y_{3, -2}Y_{4, -1}Y_{5, 2}Y_{6, 3}Y_{8, 5}$ & $2  V_0 + 3  V_1 + 3  V_2$ \\ \hline
$x_{92}$ & $Y_{1, -3}Y_{2, 2}Y_{2, 2}Y_{2, 2}Y_{3, -2}Y_{3, -2}Y_{4, -1}Y_{4, -1}Y_{5, 2}Y_{6, 3}Y_{7, 4}Y_{8, 5}$ & $4  V_0 + 5  V_1 + 6  V_2$ \\ \hline
$x_{93}$ & $Y_{1, -3}Y_{1, -3}Y_{2, 2}Y_{2, 2}Y_{2, 2}Y_{3, -2}Y_{3, -2}Y_{4, -1}Y_{4, -1}Y_{5, 2}Y_{6, 3}Y_{7, 4}Y_{7, 4}Y_{8, 5}$ & $8  V_0 + 8  V_1 + 12  V_2$ \\ \hline
$x_{94}$ & $Y_{2, 2}Y_{3, -2}Y_{4, -1}Y_{6, 3}Y_{7, 4}$ & $V_0 + V_1 + 2  V_2$ \\ \hline
$x_{95}$ & $Y_{1, -3}Y_{2, 2}Y_{2, 2}Y_{3, -2}Y_{4, -1}Y_{4, -1}Y_{5, 2}Y_{6, 3}Y_{7, 4}$ & $2  V_0 + 3  V_1 + 3  V_2$ \\ \hline
$x_{96}$ & $Y_{1, -3}Y_{1, -3}Y_{2, 2}Y_{2, 2}Y_{3, -2}Y_{4, -1}Y_{4, -1}Y_{5, 2}Y_{6, 3}Y_{7, 4}Y_{8, 5}$ & $4  V_0 + 5  V_1 + 6  V_2$ \\ \hline
$x_{97}$ & $Y_{1, -3}Y_{2, 2}Y_{4, -1}Y_{6, 3}$ & $V_0 + V_1 + V_2$ \\ \hline
$x_{98}$ & $Y_{1, -3}Y_{2, 2}Y_{2, 2}Y_{3, -2}Y_{4, -1}Y_{4, -1}Y_{5, 2}Y_{6, 3}Y_{8, 5}$ & $2  V_0 + 3  V_1 + 4  V_2$ \\ \hline
$x_{99}$ & $Y_{1, -3}Y_{1, -3}Y_{2, 2}Y_{2, 2}Y_{2, 2}Y_{3, -2}Y_{3, -2}Y_{4, -1}Y_{4, -1}Y_{5, 2}Y_{6, 3}Y_{7, 4}Y_{8, 5}$ & $8  V_0 + 8  V_1 + 12  V_2$ \\ \hline
$x_{100}$ & $Y_{1, -3}Y_{2, 2}Y_{2, 2}Y_{3, -2}Y_{4, -1}Y_{7, 4}Y_{8, 5}$ & $2  V_0 + 3  V_1 + 3  V_2$ \\ \hline
$x_{101}$ & $Y_{1, -3}Y_{2, 2}Y_{2, 2}Y_{3, -2}Y_{3, -2}Y_{4, -1}Y_{6, 3}Y_{7, 4}Y_{8, 5}$ & $4  V_0 + 5  V_1 + 6  V_2$ \\ \hline
$x_{102}$ & $Y_{1, -3}Y_{2, 2}Y_{3, -2}Y_{4, -1}Y_{5, 2}Y_{7, 4}$ & $V_0 + 2  V_1 + 2  V_2$ \\ \hline
$x_{103}$ & $Y_{1, -3}Y_{2, 2}Y_{2, 2}Y_{3, -2}Y_{3, -2}Y_{4, -1}Y_{5, 2}Y_{7, 4}Y_{8, 5}$ & $2  V_0 + 3  V_1 + 4  V_2$ \\ \hline
$x_{104}$ & $Y_{1, -3}Y_{1, -3}Y_{2, 2}Y_{2, 2}Y_{3, -2}Y_{3, -2}Y_{4, -1}Y_{5, 2}Y_{6, 3}Y_{7, 4}Y_{8, 5}$ & $4  V_0 + 5  V_1 + 6  V_2$ \\ \hline
$x_{105}$ & $Y_{1, -3}Y_{2, 2}Y_{2, 2}Y_{3, -2}Y_{4, -1}Y_{4, -1}Y_{5, 2}Y_{6, 3}Y_{7, 4}Y_{8, 5}$ & $2  V_0 + 3  V_1 + 4  V_2$ \\ \hline
$x_{106}$ & $Y_{1, -3}Y_{1, -3}Y_{2, 2}Y_{2, 2}Y_{2, 2}Y_{3, -2}Y_{3, -2}Y_{4, -1}Y_{4, -1}Y_{5, 2}Y_{6, 3}Y_{6, 3}Y_{7, 4}Y_{8, 5}$ & $8  V_0 + 8  V_1 + 12  V_2$ \\ \hline
$x_{107}$ & $Y_{1, -3}Y_{1, -3}Y_{2, 2}Y_{2, 2}Y_{2, 2}Y_{3, -2}Y_{4, -1}Y_{4, -1}Y_{5, 2}Y_{6, 3}Y_{7, 4}Y_{8, 5}$ & $4  V_0 + 5  V_1 + 6  V_2$ \\ \hline
$x_{108}$ & $Y_{1, -3}Y_{2, 2}Y_{4, -1}Y_{6, 3}Y_{7, 4}$ & $V_0 + V_1 + 2  V_2$ \\ \hline
$x_{109}$ & $Y_{1, -3}Y_{2, 2}Y_{2, 2}Y_{3, -2}Y_{4, -1}Y_{6, 3}Y_{7, 4}Y_{8, 5}$ & $2  V_0 + 3  V_1 + 3  V_2$ \\ \hline
$x_{110}$ & $Y_{1, -3}Y_{2, 2}Y_{2, 2}Y_{3, -2}Y_{4, -1}Y_{5, 2}Y_{7, 4}Y_{8, 5}$ & $2  V_0 + 3  V_1 + 4  V_2$ \\ \hline
$x_{111}$ & $Y_{1, -3}Y_{2, 2}Y_{3, -2}Y_{5, 2}Y_{8, 5}$ & $2  V_0 + 3  V_1 + 3  V_2$ \\ \hline
$x_{112}$ & $Y_{1, -3}Y_{2, 2}Y_{3, -2}Y_{5, 2}$ & $V_0 + V_1 + 2  V_2$ \\ \hline
$x_{113}$ & $Y_{5, 0}$ & $4  V_0 + 5  V_1 + 6  V_2$ \\ \hline
$x_{114}$ & $Y_{4, -1}$ & $8  V_0 + 8  V_1 + 12  V_2$ \\ \hline
$x_{115}$ & $Y_{1, -3}Y_{2, 2}Y_{2, 2}Y_{3, -2}Y_{4, -1}Y_{5, 2}Y_{6, 3}$ & $2  V_0 + 3  V_1 + 3  V_2$ \\ \hline
$x_{116}$ & $Y_{1, -3}Y_{2, 2}Y_{3, -2}Y_{4, -1}Y_{5, 2}Y_{6, 3}$ & $V_0 + V_1 + 2  V_2$ \\ \hline
$x_{117}$ & $Y_{1, -3}Y_{1, -3}Y_{2, 2}Y_{2, 2}Y_{2, 2}Y_{3, -2}Y_{3, -2}Y_{4, -1}Y_{5, 2}Y_{6, 3}Y_{7, 4}Y_{8, 5}$ & $8  V_0 + 8  V_1 + 12  V_2$ \\ \hline
$x_{118}$ & $Y_{1, -3}Y_{1, -3}Y_{2, 2}Y_{2, 2}Y_{2, 2}Y_{3, -2}Y_{3, -2}Y_{4, -1}Y_{4, -1}Y_{5, 2}Y_{6, 3}Y_{7, 4}Y_{8, 5}Y_{8, 5}$ & $8  V_0 + 8  V_1 + 12  V_2$ \\ \hline
$x_{119}$ & $Y_{2, 2}Y_{3, -2}Y_{6, 3}$ & $V_0 + 2  V_1 + 2  V_2$ \\ \hline
$x_{120}$ & $Y_{1, -3}Y_{2, 2}Y_{3, -2}Y_{6, 3}Y_{7, 4}$ & $2  V_0 + 3  V_1 + 4  V_2$ \\ \hline
$x_{121}$ & $Y_{1, -3}Y_{2, 2}Y_{2, 2}Y_{3, -2}Y_{4, -1}Y_{5, 2}Y_{6, 3}Y_{8, 5}$ & $4  V_0 + 5  V_1 + 6  V_2$ \\ \hline
$x_{122}$ & $Y_{1, -3}Y_{1, -3}Y_{2, 2}Y_{2, 2}Y_{3, -2}Y_{4, -1}Y_{5, 2}Y_{6, 3}Y_{7, 4}Y_{8, 5}$ & $8  V_0 + 8  V_1 + 12  V_2$ \\ \hline
$x_{123}$ & $Y_{1, -3}Y_{2, 2}Y_{4, -1}Y_{5, 2}Y_{7, 4}$ & $V_0 + 2  V_1 + 2  V_2$ \\ \hline
$x_{124}$ & $Y_{1, -3}Y_{2, 2}Y_{4, -1}Y_{5, 2}Y_{6, 3}$ & $2  V_0 + 3  V_1 + 4  V_2$ \\ \hline
$x_{125}$ & $Y_{1, -3}Y_{1, -3}Y_{2, 2}Y_{2, 2}Y_{2, 2}Y_{3, -2}Y_{3, -2}Y_{4, -1}Y_{4, -1}Y_{5, 2}Y_{5, 2}Y_{6, 3}Y_{7, 4}Y_{8, 5}$ & $8  V_0 + 8  V_1 + 12  V_2$ \\ \hline
$x_{126}$ & $Y_{2, 2}Y_{3, -2}Y_{5, 2}$ & $2  V_0 + 3  V_1 + 4  V_2$ \\ \hline
$x_{127}$ & $Y_{1, -3}Y_{2, 2}Y_{6, 3}$ & $4  V_0 + 5  V_1 + 6  V_2$ \\ \hline
$x_{128}$ & $Y_{1, -3}Y_{2, 2}Y_{7, 4}$ & $2  V_0 + 3  V_1 + 3  V_2$ \\ \hline
$x_{129}$ & $Y_{1, -3}Y_{2, 2}Y_{3, -2}Y_{4, -1}Y_{6, 3}Y_{7, 4}$ & $2  V_0 + 3  V_1 + 4  V_2$ \\ \hline
$x_{130}$ & $Y_{1, -3}Y_{2, 2}Y_{2, 2}Y_{3, -2}Y_{4, -1}Y_{4, -1}Y_{6, 3}Y_{7, 4}Y_{8, 5}$ & $8  V_0 + 8  V_1 + 12  V_2$ \\ \hline
$x_{131}$ & $Y_{1, -3}Y_{2, 2}Y_{2, 2}Y_{3, -2}Y_{4, -1}Y_{4, -1}Y_{5, 2}Y_{7, 4}Y_{8, 5}$ & $4  V_0 + 5  V_1 + 6  V_2$ \\ \hline
$x_{132}$ & $Y_{1, -3}Y_{2, 2}Y_{2, 2}Y_{3, -2}Y_{4, -1}Y_{5, 2}Y_{6, 3}Y_{7, 4}$ & $8  V_0 + 8  V_1 + 12  V_2$ \\ \hline
$x_{133}$ & $Y_{1, -3}Y_{2, 2}Y_{3, -2}Y_{5, 2}Y_{6, 3}$ & $8  V_0 + 8  V_1 + 12  V_2$ \\ \hline
$x_{134}$ & $Y_{1, -3}Y_{1, -3}Y_{2, 2}Y_{2, 2}Y_{3, -2}Y_{4, -1}Y_{5, 2}Y_{6, 3}Y_{7, 4}$ & $4  V_0 + 5  V_1 + 6  V_2$ \\ \hline
$x_{135}$ & $Y_{1, -3}Y_{2, 2}Y_{3, -2}Y_{5, 2}Y_{7, 4}$ & $4  V_0 + 5  V_1 + 6  V_2$ \\ \hline
$x_{136}$ & $Y_{1, -3}Y_{2, 2}Y_{2, 2}Y_{3, -2}Y_{3, -2}Y_{4, -1}Y_{4, -1}Y_{5, 2}Y_{6, 3}Y_{7, 4}Y_{8, 5}$ & $8  V_0 + 8  V_1 + 12  V_2$ \\ \hline
\caption{Cluster variables with corresponding dominant monomials and their images under $\phik$ in $\Cone$ of type $E_8$.}
\label{tab:E8_cluster} 
\end{longtable}

\end{scriptsize}

\bibliographystyle{plain}
\bibliography{main}

\end{document}